\documentclass[12pt]{amsart}
\usepackage{amssymb}
\usepackage{amsfonts}
\usepackage{amscd}
\usepackage[T1]{fontenc}

\textwidth=15cm
\oddsidemargin=5mm
\evensidemargin=5mm
\textheight=21.5cm
\parindent=0.5cm



\swapnumbers
\let\Bbb\mathbb

\newcommand{\Q}{\mathbb{Q}}
\newcommand{\F}{\mathbb{F}}
\newcommand{\Z}{\mathbb{Z}}
\newcommand{\N}{\mathbb{N}}

\def\11{{\mathbf 1}}

\def\A{\mathbb{A}}
\def\B{\mathbb{B}}
\def\Q{\mathbb{Q}}

\def\F{\mathbb{F}}

\def\cL{{\mathcal L}}
\def\cM{{\mathcal M}}

\def\cO{{\mathcal O}}

\mathchardef\alphag="7C0B
\mathchardef\betag="7C0C
\mathchardef\gammag="7C0D
\mathchardef\deltag="7C0E
\mathchardef\varepsilong="7C22
\mathchardef\varphig="7C27
\mathchardef\psig="7C20
\mathchardef\zetag="7C10
\mathchardef\epsilong="7C0F
\mathchardef\rhog="7C1A
\mathchardef\taug="7C1C
\mathchardef\upsilong="7C1D
\mathchardef\iotag="7C13
\mathchardef\thetag="7C12
\mathchardef\pig="7C19
\mathchardef\sigmag="7C1B
\mathchardef\etag="7C11
\mathchardef\omegag="7C21
\mathchardef\kappag="7C14
\mathchardef\lambdag="7C15
\mathchardef\mug="7C16
\mathchardef\xig="7C18
\mathchardef\chig="7C1F
\mathchardef\nug="7C17
\mathchardef\varthetag="7C23
\mathchardef\varpig="7C24
\mathchardef\varrhog="7C25
\mathchardef\varsigmag="7C26
\mathchardef\Omegag="7C0A
\mathchardef\Thetag="7C02
\mathchardef\Sigmag="7C06
\mathchardef\Deltag="7C01
\mathchardef\Phig="7C08
\mathchardef\Gammag="7C00
\mathchardef\Psig="7C09
\mathchardef\Lambdag="7C03
\mathchardef\Xig="7C04
\mathchardef\Pig="7C05
\mathchardef\Upsilong="7C07

\newtheorem{thm}{Theorem}
\newtheorem{lem}{Lemma}

\newtheorem{cor}[thm]{Corollary}

\newtheorem{remark}{Remark}
\newtheorem{ex}{Example}
\newtheorem{note}{Note}

\newtheorem{claim}{Claim}
\newtheorem{definition*}{Definition}
\newtheorem*{cor*}{Corollary}

\theoremstyle{plain}

\numberwithin{equation}{subsection}

\def\boxit#1#2{\setbox1=\hbox{\kern#1{#2}\kern#1}%
\dimen1=\ht1 \advance\dimen1 by #1
\dimen2=\dp1 \advance\dimen2 by #1
\setbox1=\hbox{\vrule height\dimen1 depth\dimen2\box1\vrule}%
\setbox1=\vbox{\hrule\box1\hrule}%
\advance\dimen1 by .4pt \ht1=\dimen1
\advance\dimen2 by .4pt \dp1=\dimen2 \box1\relax}

  {\par\medskip\noindent #1\par\begingroup%
    \advance\leftskip by 1em\advance\rightskip by 1em}%
  {\par\endgroup}





\begin{document}

\setcounter{tocdepth}{2} 

\title[Some Supplements to Feferman-Vaught]
{Some Supplements to Feferman-Vaught related to the Model Theory of Adeles} 

\author{Jamshid Derakhshan}
\address{University of Oxford, Mathematical Institute,
24-29 St Giles', Oxford OX1 3LB, UK}
\email{derakhsh@maths.ox.ac.uk}

\author{Angus Macintyre}
\address{Queen Mary, University of London,
School of Mathematical Sciences, Queen Mary, University of London, Mile End Road, London E1 4NS, UK}
\email{angus@eecs.qmul.ac.uk}

\subjclass[2010]{Primary 03C10,03C40,03C60,03C95,03C98; Secondary 11U05,11U09,12L05,12L12,12E50}

\keywords{Definability, Restricted Products and Adeles, Quantifier Elimination, Feferman-Vaught Theorems, 
Hyperrings, Valued Fields}

\begin{abstract} We give foundational results for the model theory of $\A_K^{fin}$, the ring of finite adeles over a number field, 
construed as a restricted product of local fields. In contrast to Weispfenning we work in the language of ring theory, and various 
sortings interpretable therein. In particular we give a systematic treatment of the product valuation and the valuation monoid. 
Deeper results are given for the adelic version of Krasner's hyperfields, relating them to the Basarab-Kuhlmann formalism.
\end{abstract}

\maketitle

\tableofcontents

\section{Introduction}\label{section-introduction}
We have recently revisited Weispfenning's work \cite{weispfenning-hab} on the rings of adeles $\A_K$ over number fields $K$. 
That work in turn depends on the classic paper of Feferman-Vaught \cite{FV} on generalized products. 
Our objective is to obtain the most refined analysis possible of definable sets in $\A_K$ (paying special attention to 
uniformity in $K$). One intended application is to computation of measures and integrals over $\A_K$. A first 
paper \cite{DM-ad} on this will soon be available. 
We think of our approach as rather more geometric, and less abstractly model theoretic, than the analysis in \cite{weispfenning-hab} 
and \cite{FV}. We prefer to work in the language of ring theory (or sometimes topological ring theory), without the 
Boolean or lattice-theoretic scaffolding from \cite{weispfenning-hab} and \cite{FV} (which has much more general applicability). We 
wish to stress that we add little to the foundations of the theory of generalized products. The treatment in \cite{weispfenning-hab} and 
\cite{FV} can hardly be improved. Rather, we work directly on the adeles $\A_K$ as a ring. However, we depend on various 
quantifier eliminations for completions $K_v$ and some of these are in many-sorted languages appropriate to Henselian fields, 
so we need a version of \cite{FV} for many-sorted structures. Moreover $\A_K$ is a restricted product in the sense of 
\cite{FV} even in a language (like ring-theory) with function-symbols. Though it is implicit in \cite{FV} how to deal with 
function symbols and sorts, we prefer to prepare this short paper providing foundations appropriate to the adelic setting. 
Further motivation is provided by the model theory of the product valuation on $\A_K$. The image is a submonoid of a 
lattice ordered group, and so not literally itself a restricted product. But some simple technical work allows us 
to find a restricted product interpretation of the image of the valuation. So it is convenient to provide some 
foundational discussion appropriate to this case.

Much more interesting is our adelic version of the Basarab-Kuhlmann structures \cite{basarab}\cite{kuhlmann} on local fields. 
We relate this to Krasner's hyperrings \cite{krasner} associated to local fields, 
and provide a natural quantifier-elimination for the adelic version.

Finally, we address the issues of stable embedding of the local fields in the adeles, 
stable interpretation of the value monoid of the adeles, and the property of not having the tree property of the second 
kind, $NTP_2$. 

All readers of \cite{FV} know the importance of enrichments of atomic Boolean algebras. A specially important case is 
$(Powerset(I), Fin)$, where $Powerset(I)$ is the powerset of $I$ and $Fin$ is the ideal of finite sets. In \cite{DM-boole} we showed 
that there is good elimination theory for various refinements, e.g. by $Even$, where $Even$ picks out the finite sets of even 
cardinality, or by predicates expressing congruence conditions on cardinality of finite sets. 
We hope that these refinements will find applications. 

We are able to work internally, in the language of ring theory, because $\A_K$ has lots of idempotents. It is not a 
von Neumann regular ring, so we are appealing to more than is used in the observations used by Kochen \cite{kochen} and Serre 
\cite[pp.389]{serre} that an ultraproduct of fields is canonically isomorphic to the product of the fields modulo a maximal ideal.

\section{Generalities}\label{sec-general}

The data for Theorems of Feferman-Vaught type consists of: 

(i) A (possibly many-sorted) first-order language $L$, which has the equality symbol $=$ of various sorts, 
and may have relation symbols and 
function-symbols. Convenient references for many-sorted logic are \cite{KK,feferman,pillay-book,MR}. One convention from 
\cite{pillay-book} that we choose not to follow is that the sorts be disjoint. This is an unnecessary restriction, especially 
when the only well-formed equality statements in our formalism demand that the terms involved be of the same sort.

(ii) $\mathcal{L}_0$, the usual language for Boolean algebra, with $\{0,1,\wedge,\vee,\bar{} \ \}$,

(iii) $\mathcal{L}$, {\it any} extension of $\mathcal{L}_0$,

(iv) $I$, an index set, with associated atomic Boolean algebra $Powerset(I)$ (the powerset of $I$, which will be denoted 
by $\B$ say),

$\B_{\mathcal{L}}$ will be {\it some} $\mathcal{L}$-structure on $\B$ where $\{0,1,\wedge,\vee,\bar{} \ \}$ 
have their usual interpretations.

(v) A family $\{M_i: i\in I\}$ of $L$-structures with product $\Pi=\prod_{i\in I} M_i$. 

One first forms, for each sort $\sigma$, the product 
$$\prod_{i\in I} Sort_{\sigma}(M_i),$$
where $Sort_{\sigma}(M_i)$, qua set, is just the $\sigma$-sort of $M_i$. 
This product is the $\sigma$-sort of the product $\Pi$. The elements are just functions 
$f_{\sigma}$ on $I$ with
$$f_{\sigma}(i) \in Sort_{\sigma}(M_i)$$
for all $i$. 

We generally write 
$$\bar f_{\sigma_1},\dots,\bar f_{\sigma_j},\dots$$
for tuples of elements of sorts 
$\sigma_1,\dots,\sigma_j,\dots$ respectively; and 
$$\bar x_{\sigma_1},\dots,\bar x_{\sigma_j},\dots$$
for tuples of $L$-variables of sorts 
$\sigma_1,\dots,\sigma_j,\dots$ respectively.

Suppose $\tau$ is a function-symbol of sort 
$$\sigma_1\times \dots \times \sigma_r \rightarrow \sigma.$$ 
Then the interpretation of $\tau$ in $\Pi$ is given by 
$$\tau^{(\Pi)}(\bar f_{\sigma_1},\dots,\bar f_{\sigma_r})(i)=
\tau^{(M_i)}(\bar f_{\sigma_1}(i),\dots,\bar f_{\sigma_r}(i)).$$
For an $L$-formula $\Phi(\bar w_{\sigma_1},\dots,\bar w_{\sigma_r})$ we define
$$[[\Phi(\bar f_{\sigma_1},\dots,\bar f_{\sigma_r})]]=\{i: M_i \models \Phi(\bar f_{\sigma_1}(i),\dots,\bar f_{\sigma_r}(i))\}.$$

The interpretation of a basic relation symbol $R$ of sort 
$$\sigma_1\times \dots \times \sigma_r$$
is given by
$$R^{(\Pi)}(\bar f_{\sigma_1},\dots,\bar f_{\sigma_r}) \Leftrightarrow 
[[R(\bar f_{\sigma_1},\dots,\bar f_{\sigma_r})]]=1.$$
In this way we have defined the natural product $L$-structure on $\Pi$, agreeing with the usual 1-sorted version.

We usually write $z_1,\dots,z_j,\dots$ for variables of the language $\cL$.

Now we bring in $\B_{\mathcal{L}}$, by defining new relations 
$$\Psi \circ < \Phi_1,\dots,\Phi_m>,$$
where $\Psi(z_1,\dots,z_m)$ is an $\mathcal{L}$-formula, and $\Phi_1,\dots,\Phi_m$ are $L$-formulas in a common set of 
variables $\bar x_{\sigma_1},\dots,\bar x_{\sigma_s}$ of sorts $\sigma_1,\dots,\sigma_s$ respectively, by:
$$\Pi\models \Psi \circ<\Phi_1,\dots, \Phi_m>(\bar f_{\sigma_1},\dots,\bar f_{\sigma_s}) \Leftrightarrow$$ 
$$\B_{\mathcal{L}}\models \Psi([[\Phi_1(\bar f_{\sigma_1},\dots,\bar f_{\sigma_s}),\dots,
[[\Phi_m(\bar f_{\sigma_1},\dots,\bar f_{\sigma_s})]]),$$
for $\bar f_{\sigma_1},\dots,\bar f_{\sigma_s}\in \Pi$.

We extend $L$ by adding a new relation symbol, of appropriate arity, for each of the above. In this way we get 
$L(\B_{\mathcal{L}})$, and $\Pi$ has been given an $L(\B_{\mathcal{L}})$-structure.

The results of Feferman-Vaught \cite{FV} are proved for one-sorted languages with no function symbols, though it is 
pointed out that their basic theorems on generalized products readily adapt to the more general case. 
The following, adequate for our adelic purposes, is a special case of an even more general theorem in \cite{FV}.
\begin{thm}\label{thm-fv} Uniformly for all families $\{M_i: i\in I\}$ the product $\Pi$ has constructive quantifier elimination in 
 $L(\B_{\mathcal{L}})$.
\end{thm}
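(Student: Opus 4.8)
The plan is to prove the theorem by induction on the structure of $L(\B_{\mathcal{L}})$-formulas, showing that every such formula $\Theta(\bar x_{\sigma_1},\dots,\bar x_{\sigma_s})$ is equivalent in $\Pi$, uniformly over all families $\{M_i:i\in I\}$ (indeed uniformly in $I$ as well), to one in \emph{standard form} $\Psi\circ\langle\Phi_1,\dots,\Phi_m\rangle$ as in Section~\ref{sec-general}, with $\Psi$ an $\mathcal{L}$-formula and $\Phi_1,\dots,\Phi_m$ $L$-formulas in a common tuple of variables of the relevant sorts. Since a standard-form formula is by construction an atomic relation of $L(\B_{\mathcal{L}})$, producing it is precisely quantifier elimination; tracking that the assignment $\Theta\mapsto(\Psi;\Phi_1,\dots,\Phi_m)$ is effective yields constructivity. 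It suffices to treat atomic formulas, negation, conjunction, and a single existential quantifier $\exists y_\sigma$ over each sort $\sigma$, since the remaining connectives and $\forall$ reduce to these.

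For the base case, an atomic $L$-formula $R(\bar t)$ — the $\bar t$ being $L$-terms, interpreted in $\Pi$ coordinatewise as recalled in Section~\ref{sec-general} — is already in standard form, with $\Psi(z):=(z=1)$ and single $L$-formula $R(\bar t)$; thus function symbols are absorbed once and for all at this stage. An atomic $L(\B_{\mathcal{L}})$-formula is one of the new relation symbols, hence already standard. For the connectives, $\neg(\Psi\circ\langle\bar\Phi\rangle)\equiv(\neg\Psi)\circ\langle\bar\Phi\rangle$, and given two standard forms $\Psi^{(1)}\circ\langle\Phi^{(1)}_1,\dots,\Phi^{(1)}_{m_1}\rangle$ and $\Psi^{(2)}\circ\langle\Phi^{(2)}_1,\dots,\Phi^{(2)}_{m_2}\rangle$ in the same variables, their conjunction is $\big(\Psi^{(1)}(z_1,\dots,z_{m_1})\wedge\Psi^{(2)}(z_{m_1+1},\dots,z_{m_1+m_2})\big)\circ\langle\Phi^{(1)}_1,\dots,\Phi^{(1)}_{m_1},\Phi^{(2)}_1,\dots,\Phi^{(2)}_{m_2}\rangle$, using that $\mathcal{L}\supseteq\mathcal{L}_0$ is closed under $\wedge$ (after disjointifying the $\mathcal{L}$-variable blocks). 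Padding with dummy $L$-variables keeps all $\Phi$'s in a common tuple throughout.

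The substantial step is the quantifier. Suppose $\Theta'(\bar x_{\bar\sigma},y_\sigma)$ is in standard form $\Psi'\circ\langle\Phi'_1,\dots,\Phi'_m\rangle$ with $L$-formulas $\Phi'_j=\Phi'_j(\bar x_{\bar\sigma},y_\sigma)$; we must put $\exists y_\sigma\,\Theta'$ into standard form in $\bar x_{\bar\sigma}$. For each $J\subseteq\{1,\dots,m\}$ introduce the $L$-formula
$$\rho_J(\bar x_{\bar\sigma})\ :=\ \exists y_\sigma\Big(\bigwedge_{j\in J}\Phi'_j(\bar x_{\bar\sigma},y_\sigma)\ \wedge\ \bigwedge_{j\notin J}\neg\Phi'_j(\bar x_{\bar\sigma},y_\sigma)\Big),$$
so that for $\bar f\in\Pi$ the set $[[\rho_J(\bar f)]]$ is exactly the set of coordinates $i$ at which some element of $Sort_\sigma(M_i)$ realizes the \emph{pattern} $J$ over $\bar f(i)$, i.e. satisfies precisely the $\Phi'_j$ with $j\in J$. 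The point is that a potential witness $g_\sigma\in\prod_i Sort_\sigma(M_i)$ may be assembled coordinatewise and independently: $\Pi\models\exists y_\sigma\,\Theta'(\bar f,y_\sigma)$ holds if and only if there is a partition $(v_J)_{J\subseteq\{1,\dots,m\}}$ of $1$ by elements of $\B$ with $v_J\le[[\rho_J(\bar f)]]$ for all $J$ and $\B_{\mathcal{L}}\models\Psi'\big(\big(\bigvee_{J\ni j}v_J\big)_{j=1}^m\big)$. Indeed, a witness $g_\sigma$ gives the partition $v_J=\{i:\{j:M_i\models\Phi'_j(\bar f(i),g_\sigma(i))\}=J\}$, for which $\bigvee_{J\ni j}v_J=[[\Phi'_j(\bar f,g_\sigma)]]$; conversely such a partition lets one pick, at each $i$, a $g_\sigma(i)$ realizing the unique $J$ with $i\in v_J$, reproducing those Boolean values. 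Since $(v_J)_J$ ranges over tuples of elements of the Boolean algebra $\B_{\mathcal{L}}$, and "being a partition of $1$'' together with the terms $\bigvee_{J\ni j}v_J$ and the inclusions $v_J\le u_J$ are expressible in $\mathcal{L}_0\subseteq\mathcal{L}$, it follows that $\exists y_\sigma\,\Theta'\equiv\Psi''\circ\langle(\rho_J)_{J\subseteq\{1,\dots,m\}}\rangle(\bar x_{\bar\sigma})$, where, writing $\pi\big((v_J)_J\big)$ for the $\mathcal{L}_0$-statement that the $v_J$ partition $1$,
$$\Psi''\big((u_J)_J\big)\ :=\ \exists(v_J)_J\Big(\pi\big((v_J)_J\big)\ \wedge\ \bigwedge_{J}\big(v_J\le u_J\big)\ \wedge\ \Psi'\big(\big({\textstyle\bigvee_{J\ni j}}v_J\big)_{j=1}^m\big)\Big)$$
is an $\mathcal{L}$-formula. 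This closes the induction; every step is effective and there are only $2^m$ patterns, so the translation — and with it the quantifier elimination — is constructive and uniform in the family (and in $I$).

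I expect the only real obstacle to be this quantifier step, and within it the verification that witnesses can be chosen coordinate-by-coordinate, so that quantification over $\prod_i Sort_\sigma(M_i)$ collapses to a Boolean-algebra quantifier over the finitely many pattern-sets $[[\rho_J(\bar f)]]$; this is the genuine Feferman--Vaught content. The many-sorted refinement costs nothing beyond ensuring each $\rho_J$ quantifies over the same sort $\sigma$ as the quantifier being eliminated, and function symbols never interfere because all $L$-terms are interpreted coordinatewise and hence are swept into the $L$-formulas $\Phi'_j$ at the atomic stage; what remains is bookkeeping of free variables.
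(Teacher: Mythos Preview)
Your proposal is correct and is precisely the Feferman--Vaught argument (Theorem~3.1 of \cite{FV}) that the paper invokes: the paper's own proof is a one-line citation, ``The proof of Theorem 3.1 in \cite[pp.~65]{FV} goes through,'' whereas you have spelled out that proof in full, including the verification that sorts and function symbols cause no difficulty. Your treatment of the quantifier step via the $2^m$ pattern formulas $\rho_J$ and the Boolean partition $(v_J)_J$ is exactly the standard reduction, and your final remarks correctly isolate why the many-sorted, function-symbol setting requires no new idea.
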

\begin{proof} The proof of Theorem 3.1 in \cite[pp.65]{FV} goes through.
 \end{proof}

\section{Restricted products I}\label{res-pr1}
This is a more delicate matter. To our knowledge, the construction has been studied only 
for 1-sorted situations. We quickly review the definitions {\it in that case}.

So $L$ is assumed 1-sorted. Moreover, we assume $\mathcal{L}$ has a 1-ary relation symbol $Fin$, interpreted in $Powerset(I)$ 
as the set of finite subsets of $I$. 

Let $\Phi(x)$ be a fixed $L$-formula in a single variable $v$, 
and $\{M_i:i\in I\}$ be $L$-structures subject only to the constraint that each 
$\Phi(M_i)$ is an $L$-substructure of $M_i$ (here $\Phi(M_i)$ denotes the set defined by $\Phi(x)$ in $M_i$). 

With the above assumptions we define the restricted product of the 
$M_i$, ($i\in I$), relative to the formula 
$\Phi(x)$, denoted $\Pi^{(\Phi)}$ (or $\prod_{i\in I}^{(\Phi)} M_i$), as the $L$-substructure of 
$\Pi=\prod_{i\in I} M_i$ consisting of the $f$ such that
$$Fin([[\neg \Phi(f)]])$$
holds. (Our preceding assumptions make it an $L$-{\it substructure} of $\Pi$).

\begin{ex} $L$ is the language of group theory, with primitives 
$\{\cdot,^{-1},1\}$, and $\Phi(x)$ is the formula $x=1$. 
The $M_i$ are arbitrary. The restricted product is the direct sum.
 \end{ex}

\begin{ex} $L$ is the language of fields with a valuation ring, with primitives $\{+,-,\cdot,0,1,V\}$, where 
where $V(x)$ is a unary predicate for the valuation ring. $\Phi(x)$ is the formula $V(x)$. 

If $\{M_i: i\in I\}$ is the family of completions of an algebraic number field $K$ with respect to the normalized 
valuations of $K$, the restricted product is the ring of adeles of $K$, denoted $\A_K$ (cf.\cite{Cassels}).

If $\{M_i: i\in I\}$ is the family of completions of $K$ with respect to the non-archimedean normalized 
valuations of $K$, then restricted product is the ring of finite adeles of $K$, denoted $\A_{K}^{fin}$.
\end{ex}

In fact $\Pi^{(\Phi)}$ is an $L(\B_{\mathcal{L}})$-definable $L(\B_{\mathcal{L}})$-substructure of $\Pi$ (remember, 
$Fin$ is in $\mathcal{L}$) since the basic $L(\B_{\mathcal{L}})$-formulas not in $\mathcal{L}$ are all relational). 


\begin{thm}\label{fv-thm} $\Pi^{(\Phi)}$ has quantifier-elimination as an $L(\B_{\mathcal{L}})$-structure uniformly in 
 $\{M_i:i\in I\}$ and $\Phi(x)$, and effectively.
\end{thm}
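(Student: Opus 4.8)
The plan is to reduce the statement about the restricted product $\Pi^{(\Phi)}$ to the statement about the full product $\Pi$, for which quantifier elimination in $L(\B_{\mathcal{L}})$ is already available by Theorem~\ref{thm-fv}. The key observation, already noted in the excerpt, is that $\Pi^{(\Phi)}$ is an $L(\B_{\mathcal{L}})$-definable substructure of $\Pi$, cut out by the single $L(\B_{\mathcal{L}})$-sentence-with-parameter $Fin([[\neg\Phi(v)]])$ applied to the relevant free variable; moreover all the new basic relations of $L(\B_{\mathcal{L}})$ are purely relational, so the restriction to $\Pi^{(\Phi)}$ of an $L(\B_{\mathcal{L}})$-formula is computed exactly as in $\Pi$. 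Thus every basic $L(\B_{\mathcal{L}})$-formula, and hence every quantifier-free $L(\B_{\mathcal{L}})$-formula, has the same meaning on tuples from $\Pi^{(\Phi)}$ whether evaluated in $\Pi^{(\Phi)}$ or in $\Pi$. The only thing that changes under passage to the substructure is the range of the quantifiers.

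First I would fix an $L(\B_{\mathcal{L}})$-formula $\Theta(\bar x)$ in free variables $\bar x$ and, peeling off quantifiers one at a time, reduce to the case $\Theta(\bar x) = \exists v\, \Xi(v,\bar x)$ with $\Xi$ already quantifier-free in $L(\B_{\mathcal{L}})$ (by induction, using that a quantifier-free $L(\B_{\mathcal{L}})$-formula is, up to Boolean combination, of the form $\Psi\circ\langle\Phi_1,\dots,\Phi_m\rangle$ with the $\Phi_j$ quantifier-free in $L$). Relativising the existential quantifier to $\Pi^{(\Phi)}$ inside $\Pi$ means replacing $\exists v\,\Xi(v,\bar x)$ by $\exists v\,(Fin([[\neg\Phi(v)]]) \wedge \Xi(v,\bar x))$. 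The conjunct $Fin([[\neg\Phi(v)]])$ is itself a basic $L(\B_{\mathcal{L}})$-relation (it is $\Psi_{Fin}\circ\langle\neg\Phi\rangle$ where $\Psi_{Fin}(z)$ is the $\mathcal{L}$-formula $Fin(\bar z)$), so the relativised formula is again an honest $L(\B_{\mathcal{L}})$-formula interpreted in $\Pi$. Now apply Theorem~\ref{thm-fv} to $\Pi$: this existential $L(\B_{\mathcal{L}})$-formula is equivalent, in $\Pi$, uniformly and effectively, to a quantifier-free $L(\B_{\mathcal{L}})$-formula $\Theta'(\bar x)$. Restricting back to $\Pi^{(\Phi)}$, and using the first paragraph's remark that quantifier-free $L(\B_{\mathcal{L}})$-formulas are absolute between $\Pi^{(\Phi)}$ and $\Pi$, we get that $\Pi^{(\Phi)} \models \Theta(\bar x) \leftrightarrow \Theta'(\bar x)$ for all $\bar x$ from $\Pi^{(\Phi)}$. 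Iterating over all quantifiers of the original formula gives full quantifier elimination for $\Pi^{(\Phi)}$.

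The point requiring a little care — and the step I expect to be the main obstacle — is the bookkeeping around sorts and free variables of $\mathcal{L}$-type versus $L$-type: one must check that relativising the quantifier only ever introduces the $Fin$-predicate applied to $L$-definable subsets (never a Boolean variable one cannot control), that the set $[[\neg\Phi(v)]]$ really does vary as an element of $\B$ with $v$ in a way the formalism of $\B_{\mathcal{L}}$ already accommodates, and that the uniformity and effectivity claims of Theorem~\ref{thm-fv} genuinely transfer — i.e. that the algorithm producing $\Theta'$ from $\Theta$ does not secretly depend on the family $\{M_i\}$. Since $\Pi^{(\Phi)}$ is definable in $\Pi$ by a formula not mentioning the $M_i$, and Theorem~\ref{thm-fv} is already uniform and constructive in the $M_i$, this transfer is immediate once the relativisation is set up correctly; no genuinely new combinatorial input beyond Feferman-Vaught is needed, which is exactly the sense in which, as the authors say, they "add little to the foundations."
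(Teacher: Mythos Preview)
Your proposal is correct and follows essentially the same route as the paper's proof: relativize each quantifier over $\Pi^{(\Phi)}$ by conjoining $Fin([[\neg\Phi(v)]])$, apply Theorem~\ref{thm-fv} to the resulting formula in $\Pi$, and use that quantifier-free $L(\B_{\mathcal{L}})$-formulas are absolute between the substructure $\Pi^{(\Phi)}$ and $\Pi$. The only cosmetic difference is that the paper relativizes all quantifiers at once and applies Theorem~\ref{thm-fv} a single time, whereas you peel them off one by one; the content is the same.
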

\begin{proof} For an $L(\B_{\mathcal{L}})$-formula $\Psi$, let $\Psi^{\Pi^{(\Phi)}}$ denote the 
relativization of $\Psi$ to $\Pi^{(\Phi)}$ which is defined inductively by replacing a quantified subformula 
$$\exists y (\Psi(...,y,...))$$
(where $y$ is a single variable) in $\Psi$ by the quantified formula
$$\exists y (Fin([[\neg \Phi(y)]])\wedge \Psi(...,y,...)).$$
Now, by Theorem \ref{thm-fv}, we have for all tuples $\bar f$ from $\Pi$,
$$\Pi\models \Psi^{\Pi^{(\Phi)}}(\bar f)\Leftrightarrow \Pi\models \Theta(\bar f)$$
for a quantifier-free formula $\Theta$ from $L(\B_{\mathcal{L}})$. Finally, since 
$\Pi^{(\Phi)}$ is an $L(\B_{\mathcal{L}})$-definable $L(\B_{\mathcal{L}})$-substructure of $\Pi$, we get
$$\Pi^{(\Phi)}\models \Psi(\bar f) \Leftrightarrow \Pi\models \Psi^{\Pi^{(\Phi)}}(\bar f)$$
$$\Leftrightarrow \Pi\models \Theta(\bar f) \Leftrightarrow \Pi^{(\Phi)}\models \Theta(\bar f).$$
\end{proof}
\section{Restricted products II} We have experimented with various notions of restricted product in the many-sorted case. 
The notion explained below is the only viable notion we found. In this section, we give a many-sorted version of the results in 
Section \ref{res-pr1}. 

Assume $L$ is many-sorted, perhaps with both function-symbols and relation-symbols. Let $M, N$ be $L$-structures. 
We put $N_{\sigma}=Sort_{\sigma}(N)$ 
for every sort $\sigma$.
\begin{definition*} An $L$-morphism $F:N\rightarrow M$ is a collection of maps 
$$F_{\sigma}: N_{\sigma} \rightarrow M_{\sigma},$$
where $\sigma$ ranges over the sorts, such that for any relation symbol $R$ of sort 
$$\sigma_1\times \dots \times \sigma_k$$
we have, 
$$N_{\sigma_1}\times \dots \times N_{\sigma_k} \models R(\bar f_1,\dots,\bar f_k)\Leftrightarrow$$ 
$$M_{\sigma_1}\times \dots \times M_{\sigma_k} \models R(F_{\sigma_1}(\bar f_1),\dots,F_{\sigma_k}(\bar f_k)),$$
and for any function symbol $G$ of sort
$$\sigma_1\times \dots \times \sigma_k \rightarrow \sigma$$
we have 
$$G(F_{\sigma_1}(\bar f_1),\dots,F_{\sigma_k}(\bar f_k))=F_{\sigma}(G(\bar f_1,\dots,\bar f_k)),$$
where $\bar f_1,\dots,\bar f_k$ denote tuples of elements of sorts $\sigma_1,\dots,\sigma_k$ respectively.
\end{definition*}

\begin{note} Our convention that we have the usual equality as a binary relation on each sort forces each 
$F_{\sigma}$ to be injective. There is certainly a case for relaxing this convention or replacing 
$\Leftrightarrow$ by $\rightarrow$, but there is no gain for our present purposes.
\end{note}

If each $N_{\sigma} \subseteq M_{\sigma}$, and the identity maps constitute an $L$-morphism, we say 
{\it $N$ is an $L$-substructure of $M$}.

Suppose that for each sort $\sigma$ we have a formula $\Phi_{\sigma}(x_{\sigma})$ in a 
single free variable $x_{\sigma}$ of 
sort $\sigma$, and we make the assumption that for each $\sigma$ for all $i$ the sets 
$$S_{\sigma,i}=\{x\in Sort_{\sigma}(M_i): M_i\models \Phi_{\sigma}(x)\}$$
naturally constitute an $L$-substructure of $M_i$. Note that in particular, for any function symbol $F$ of sort 
$$\sigma \rightarrow \tau$$
and any $a\in S_{\sigma}(M_i)$ we have that 
$$F(a)\in S_{\tau}(M_i),$$
for all $i$.

Then we define $\Pi^{(\Phi_{\sigma})}$ (also denoted $\prod_{i\in I}^{(\Phi_{\sigma})} M_i$), 
the restricted product with respect to the formulas $\Phi_{\sigma}(x)$, as the 
$L(\B_{\mathcal{L}})$-substructure of $\Pi$ consisting in sort $\sigma$ of the
$$f_{\sigma}\in \prod_{i\in I} S_{\sigma}(M_i)$$ 
such that 
$$Fin([[\neg \Phi_{\sigma}(f_{\sigma})]])$$
holds.
 
Note that $\Pi^{(\Phi_{\sigma})}$ is $L$-sorted: given $\sigma$, a sort of $L$, 
the $\sigma$-sort of $\Pi^{(\Phi_{\sigma})}$ is the set of all $f_{\sigma}\in \prod_{i\in I} S_{\sigma}(M_i)$ such that 
$$Fin([[\neg \Phi_{\sigma}(f_{\sigma})]])$$
holds. 

If $F$ is a function symbol of sort 
$$\sigma \rightarrow \tau,$$
and $a$ is in the $\sigma$-sort of $\Pi^{(\Phi_{\sigma})}$, then since the sets $S_{\sigma}(i)$ are $L$-substructures of $M_i$ for all 
$i$, we deduce that 
$$Fin([[\neg \Phi_{\tau}(F(f_{\sigma}))]])$$
holds. Hence $F(a)$ lies in $Sort_{\tau}(\Pi^{(\Phi_{\sigma})})$. 
Thus $\Pi^{(\Phi_{\sigma})}$ is a substructure of $\Pi$. It is clearly $L(\B_{\mathcal{L}})$-definable.

Now it is clear that the quantifier-elimination 
and effectivity of Theorem \ref{fv-thm} goes through. It is convenient to refer to the general version as Theorem 
$2_{\mathrm{sort}}$. 

Below we will give some important examples of restricted products associated to adeles, namely, the lattice-ordered 
value monoid and various hyperring structures connected to Basarab's formalism \cite{basarab}, which in turn connects 
to the much earlier work of Krasner \cite{krasner}.
\section{Eliminating the Boolean superstructure and quantifier elimination for adele rings} 
Already in Kochen \cite{kochen} (and surely in von Neumann's work) one sees that products 
$\prod_{i\in I} M_i$ of fields $M_i$ are von Neumann regular rings, with idempotents corresponding 
to subsets $S$ of $I$ via the correspondence 
$$S \longrightarrow e_S,$$
where $e_S$ is the idempotent defined by
$$e_S(i)=1,~i\in S,$$
$$e_S(i)=0,~i\notin S,$$
and conversely, an idempotent $e$ corresponds to the set 
$$S(e)=\{i\in I: e(i)=1\}.$$

Given an element $a=(f_1,f_2,\dots)\in \prod_{i\in I} M_i$, let $e_a$ be the idempotent 
corresponding to the support
$$supp(a)=\{i\in I: a(i)\neq 0\}.$$
Thus $e_a(i)=0$ if $a(i)=0$ and $e_a(i)=1$ if $a(i)\neq 0$. 

Note that $a$ and $e_a$ generate the same ideal. 
Indeed, it is clear that $a=(e_a)a$, thus $a$ is in the ideal generated 
by $e_a$. Conversely, let $b\in \prod_{i\in I} M_i$ be 
defined by $b(i)=0$ when $a(i)=0$ and $b(i)=a(i)^{-1}$ when $a(i)\neq 0$. 
Then $e_{a}=ab$, showing that $e_a$ is in the ideal generated by $a$. This shows von Neumann regularity of 
the products $\prod_{i\in I} M_i$ of fields $M_i$.

Kochen \cite{kochen} observes that maximal ideals in $\prod_{i\in I} M_i$ correspond to 
ultrafilters on the Boolean algebra $Powerset(I)$. The point is that in some cases of products one can code the 
Boolean algebra $Powerset(I)$ purely algebraically in the product ring $\prod_{i\in I} M_i$. In this way one can 
sometimes reconstruct the external Boolean apparatus inside the product ring. Note that there are limits to this, for example,  
one cannot define $Fin$ internally in any infinite product $\prod_{i\in I} M_i$. But, and this is crucial for us, 
we can define $Fin$ internally in the adeles $\A_K$ and in the finite adeles $\A_K^{fin}$ as is shown in \cite{DM-ad}. We 
remark that $\A_K$ and $\A_K^{fin}$ are not von Neumann regular (cf.\ \cite{DM-ad}).

The main novelty of \cite{DM-ad} over \cite{weispfenning-hab} is the internalization of \cite{FV} for the case of the 
adeles. In this way we get better quantifier-elimination. Below we briefly review the first-order definitions of the 
Boolean algebra, Boolean value, and the ideal $Fin$ in the case of adeles from \cite{DM-ad}. 

Let $K$ be a number field and $V_K$ (resp.\  $V_{K}^f$) denote the set of 
normalized valuations (resp.\ normalized non-archimedean (discrete) valuations) 
of $K$. Note the correspondence between subsets of $V_K^f$ and 
idempotents in $\Bbb A_K^{fin}$ given by 
$$S \longrightarrow e_S,$$ where 
$e_S(v)=1$ if $v\in S\subseteq V_K^f$, and $e_S=0$ if $v\notin S$. Clearly $e_S\in \Bbb A_K^{fin}$. Conversely, 
given an idempotent $e\in \Bbb A_K^{fin}$, let $S=\{v: e(v)=1\}$. Then $e=e_{S}$. 

There is a similar correspondence between subsets of $V_K$ and idempotents in $\A_K$.

Denote by $\B_K^f$ the Boolean algebra of idempotents of $\Bbb A_K^{fin}$ with the operations
$$e\wedge f=ef.$$
$$e\vee f=1-(1-e)(1-f)=e+f-ef.$$
$$\bar e=1-e.$$
$\B_K^f$ is quantifier-free definable 
in $\Bbb A_K^{fin}$ in the language of rings. 
Note that minimal idempotents $e$ correspond to normalized valuations $v_e$ of $K$, and vice-versa, 
$v$ corresponds to $e_{\{v\}}$ above. One has
$$\Bbb A_K^{fin}/(1-e)\Bbb A_K^{fin}\cong e\Bbb A_K^{fin}\cong K_{v_e}.$$ 
Note that the first of these structures is a definable quotient of $\A_K^{fin}$ and the second is a definable 
subring of $\A_K^{fin}$ with $e$ as a unit.

Similarly, one can define the Boolean algebra of idempotents in $\A_K$ and similar assertions hold.

Given a formula $\Phi(x_1,\dots,x_n)$ of the ring language, define $Loc(\Phi)$ as the set of all 
$$(e,a_1,\dots,a_n)\in\Bbb A_K^{n+1}$$
such that 
$e$ is a minimal idempotent and 
$$e\Bbb A_K^{fin} \models \Phi(ea_1,\dots,ea_n).$$
Here, $e\Bbb A_K^{fin}$ is 
a subring of $\Bbb A_K^{fin}$ with $e$ as its unit, is definable with the parameter $e$, and
$$e\A_K^{fin}\models \Phi(ea_1,\dots,ea_n) \Leftrightarrow \A_K^{fin}\models \Phi(ea_1,\dots,ea_n).$$
Note that $Loc(\Phi)$ is a definable subset of $(\A_K^{fin})^{n+1}$. 

Let $a_1,\dots,a_n\in \Bbb A_K^{fin}$. Define the Boolean value  
$$[[\Phi(a_1,\dots,a_n)]]$$
as the supremum of all the minimal idempotents $e$ in $\Bbb B_K^f$ such that 
$$(e,a_1,\dots,a_n)\in Loc(\Phi).$$
This is a definition in the language of rings which is uniform for all number fields $K$. If $\Phi$ has a string 
of quantifiers $Q_1x_1\dots Q_kx_k$, then the the formula defining $[[\Phi(a_1,\dots,a_n)]]$ 
has the string of quantifiers $\forall z Q_1x_1\dots Q_kx_k$, where $z$ is a variable distinct from the 
$x_1,\dots,x_k$.

The functions $\Bbb (\A_K^{fin})^n\rightarrow \Bbb A_K^{fin}$ given by 
$$(a_1,\dots,a_n)\rightarrow [[\Phi(a_1,\dots,a_n)]]$$ (for each formula $\Phi$) 
are definable in the language of rings, uniformly for all $K$. 
The support of an element $a\in \Bbb A_K^{fin}$, denoted $supp(a)$, 
is defined as $[[x\neq 0]]$0. 

We remark that the concepts of Boolean value can be defined similarly for the adele ring $\A_K$ and similar 
assertions hold. 

We shall denote by $\F_{K}^f$ (resp.\ $\F_{K}$) the ideal of idempotents in $\A_K^{fin}$ (resp.\ $\A_K$) whose 
support is finite.

The sets $\F_{K}^f$ (resp.\ $\F_{K}$) are definable in 
$\A_K^{fin}$ (resp.\ $\A_{K}$) in the language of rings. 
The defining formulas are existential-universal-existential (Cf.\ \cite{DM-ad}).

These results in conjunction with Theorem $2_{\mathrm{sort}}$ yield 
the following quantifier elimination theorem for 
$\A_K^{fin}$ in suitable extensions of the language of rings. 
\begin{thm}\cite{DM-ad}\label{qe-ad} $K$ be a number field. 
Let $L$ be a one-sorted (resp.\ many-sorted) extension of the language of rings 
in which the non-archimedean completions $K_v$ have 
uniform quantifier elimination (resp.\ uniform quantifier elimination in a sort $\sigma$). 
Let $\cL$ be any extension of the language of Boolean algebras containing a unary predicate 
$Fin(x)$ for the ideal of finite sets and unary predicates $C_j(x)$, for all $j\geq 1$, 
stating the there are at least $j$ distinct atoms below $x$. 
Let $\Phi(\bar x)$ be an $L$-formula. Then there are $L$-formulas $\Psi_1(\bar x),\dots,\Psi_m(\bar x)$ 
which are quantifier-free (resp.\ quantifier-free in sort $\sigma$) 
and a quantifier-free $\cL$-formula $\Theta(x_1,\dots,x_m)$ 
such that $\Phi(\bar x)$ is equivalent, modulo $Th(\A_K^{fin})$, to 
$$\Theta([[\Psi_1(\bar x)]],\dots,[[\Psi_m(\bar x)]]).$$
\end{thm}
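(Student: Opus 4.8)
The plan is to deduce Theorem \ref{qe-ad} from Theorem $2_{\mathrm{sort}}$ together with the definability results just established for $[[\cdot]]$, $\B_K^f$, and $Fin$ inside $\A_K^{fin}$ itself. First I would set up the restricted-product picture: take $I = V_K^f$, $M_v = K_v$ for $v \in V_K^f$, and $\Phi(x) = V(x)$ the formula picking out the valuation ring (in the one-sorted case, or in the many-sorted case one selects for each sort $\sigma$ a formula $\Phi_\sigma$ cutting out an $L$-substructure of the completion, using that $K_v$ has uniform quantifier elimination in sort $\sigma$). By Example 2 of Section \ref{res-pr1}, $\A_K^{fin} = \prod_{v \in V_K^f}^{(\Phi)} K_v$. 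For the Boolean side take $\cL$ to be the language of Boolean algebras enriched by $Fin$ and the cardinality predicates $C_j$, interpreted on $\B = Powerset(V_K^f)$ in the obvious way, giving $\B_\cL$. Theorem $2_{\mathrm{sort}}$ then says $\A_K^{fin}$, as an $L(\B_\cL)$-structure, has (effective, uniform) quantifier elimination: every $L$-formula $\Phi(\bar x)$ is $Th(\A_K^{fin})$-equivalent to a quantifier-free $L(\B_\cL)$-formula, i.e.\ (after pushing the $\cL$-structure to the outermost layer, as in the syntactic form of the Feferman-Vaught reduction) to a Boolean combination of formulas $\Theta([[\Psi_1(\bar x)]], \dots, [[\Psi_m(\bar x)]])$ where the $\Psi_j$ are quantifier-free $L$-formulas, $\Theta$ is a quantifier-free $\cL$-formula, and $[[\cdot]]$ and the $\Psi_j(K_v)$ are interpreted in the external structures $K_v$ and $\B_\cL$.

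The second and genuinely new step is \emph{internalization}: replacing the external Boolean algebra $\B_\cL$ and the external map $[[\cdot]]$ by their first-order definable counterparts inside $\A_K^{fin}$. Here I would invoke, as already recalled in this section, that (a) $\B_K^f$, the Boolean algebra of idempotents, is quantifier-free definable in $\A_K^{fin}$ in the ring language, with minimal idempotents corresponding bijectively to elements of $V_K^f$; (b) for each ring-language formula $\Phi$, the function $\bar a \mapsto [[\Phi(\bar a)]]$ (defined via $Loc(\Phi)$ as the supremum of the minimal idempotents $e$ with $e\A_K^{fin} \models \Phi(e\bar a)$) is definable in the ring language, uniformly in $K$, with a quantifier prefix $\forall z\, Q_1 x_1 \cdots Q_k x_k$; and (c) $Fin = \F_K^f$, and each predicate $C_j$, is definable in $\A_K^{fin}$ in the ring language (by an $\exists\forall\exists$ formula, per \cite{DM-ad}). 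Granting these, the external equivalence from Theorem $2_{\mathrm{sort}}$ transports verbatim into an internal one: the right-hand side $\Theta([[\Psi_1(\bar x)]], \dots, [[\Psi_m(\bar x)]])$ is now literally an $L$-formula (the Boolean superstructure has been eliminated), and it is equivalent to $\Phi(\bar x)$ modulo $Th(\A_K^{fin})$. One needs to check the compatibility of the two readings of $[[\cdot]]$ — the Feferman-Vaught Boolean-value and the ring-definable one — but this is exactly the point of the discussion preceding the theorem: minimal idempotents $\leftrightarrow$ valuations, $e\A_K^{fin} \cong K_{v_e}$, and $e\A_K^{fin} \models \Phi(e\bar a) \Leftrightarrow \A_K^{fin} \models \Phi(e\bar a)$, so $[[\Phi(\bar a)]]$ computed internally picks out exactly $\{v : K_v \models \Phi(\bar a(v))\}$, matching the external $[[\Phi(\bar a)]]$.

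For the many-sorted refinement I would run the same argument with Theorem $2_{\mathrm{sort}}$ in place of Theorem \ref{fv-thm}: the hypothesis is that $K_v$ has uniform quantifier elimination in the designated sort $\sigma$, so the $\Psi_j$ produced can be taken quantifier-free in sort $\sigma$ (the elimination in the Feferman-Vaught machinery only ever introduces quantifiers bounded by $Fin([[\neg \Phi_\sigma(y)]])$, which does not affect quantifier-freeness in $\sigma$ of the base $L$-formulas once the completions' elimination in $\sigma$ is applied fibrewise). The effectivity claim is inherited from the effectivity in Theorem \ref{fv-thm}/$2_{\mathrm{sort}}$ together with the explicit (effective) definitions of $\B_K^f$, $[[\cdot]]$, $Fin$, and $C_j$.

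I expect the main obstacle to be bookkeeping rather than conceptual: one must be careful that the ``quantifier-free $\cL$-formula $\Theta$'' from Theorem $2_{\mathrm{sort}}$ genuinely uses only $Fin$, $C_j$, and Boolean operations — i.e.\ that the Feferman-Vaught reduction, applied with our particular $\cL$, does not secretly require richer Boolean predicates — and that, after internalization, the predicates $C_j$ (``at least $j$ atoms below $x$'') are the ones actually definable in $\A_K^{fin}$. The delicate verification is therefore matching the list of $\cL$-predicates that the abstract elimination needs against the list proved first-order definable in $\A_K^{fin}$ in \cite{DM-ad}; once that match is confirmed, the translation of the external quantifier elimination into the internal statement of the theorem is formal.
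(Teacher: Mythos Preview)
Your proposal is correct and follows essentially the same approach as the paper, which gives no detailed proof but simply states that the theorem follows from ``these results in conjunction with Theorem $2_{\mathrm{sort}}$'' --- i.e., the Feferman--Vaught elimination for restricted products combined with the internal definability of $\B_K^f$, $[[\cdot]]$, and $Fin$ in $\A_K^{fin}$. Your two-step outline (external elimination via Theorem $2_{\mathrm{sort}}$, then internalization via the ring-definable Boolean apparatus) is exactly what the paper intends, and your flagged obstacle about matching the $\cL$-predicates to those proved definable in \cite{DM-ad} is the right point of care; one minor expository slip is that in your first paragraph you write that Theorem $2_{\mathrm{sort}}$ already yields quantifier-free $\Psi_j$, whereas in fact the $\Psi_j$ coming out of Feferman--Vaught are arbitrary $L$-formulas and it is the \emph{uniform} quantifier elimination in the $K_v$ (applied fibrewise, as you correctly say later) that replaces them by quantifier-free ones.
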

Note that by the definition of $Fin$ in the language of rings, this is 
a quantifier elimination in $L$. Now using the quantifier elimination in the theory of infinite atomic Boolean algebras 
in the Boolean language enriched by the predicates $Fin(x), C_j(x)$, for all $j\geq 1$, (cf.\ \cite{DM-boole}), we 
deduce the following.
\begin{cor}\cite{DM-ad} \label{adelic-qe} A definable set $X\subseteq (\A_K^{fin})^m$ is a finite 
Boolean combination of sets of the form
 \begin{itemize}
  \item $Fin([[\Psi(\bar x)]])$,
  \item $C_j([[\Xi(\bar x)]])$,
\end{itemize}
where $\Psi(\bar x)$ and $\Xi(\bar x)$ are quantifier-free $L$-formulas (resp.\ quantifier-free $L$-formulas in sort $\sigma$), 
where $L$ is as in the Theorem \ref{qe-ad}.
\end{cor}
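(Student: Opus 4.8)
The plan is to combine Theorem~\ref{qe-ad} with the known elimination for infinite atomic Boolean algebras enriched by $Fin$ and the $C_j$, and then to rewrite the resulting quantifier-free $\cL$-formula in the two families of ``building blocks'' claimed. First I would invoke Theorem~\ref{qe-ad} to reduce the given definable set $X\subseteq(\A_K^{fin})^m$ to the form $\Theta([[\Psi_1(\bar x)]],\dots,[[\Psi_r(\bar x)]])$, where each $\Psi_\ell$ is quantifier-free in $L$ (resp.\ in sort $\sigma$) and $\Theta$ is a quantifier-free $\cL$-formula, $\cL$ being the Boolean language with $Fin$ and the $C_j$'s adjoined. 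So the whole problem is pushed into the Boolean sort: one has to understand which subsets of $\B_K^f$ are cut out by quantifier-free $\cL$-formulas, applied to the tuple of Boolean values $([[\Psi_1(\bar x)]],\dots,[[\Psi_r(\bar x)]])$.

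Next I would analyse quantifier-free $\cL$-formulas. By the quantifier elimination for infinite atomic Boolean algebras in the language $\{0,1,\wedge,\vee,\bar{}\,\}\cup\{Fin\}\cup\{C_j:j\ge 1\}$ (this is the result of \cite{DM-boole} cited just before the corollary, which we are entitled to use), every $\cL$-formula is equivalent modulo the theory to a quantifier-free one, and a quantifier-free $\cL$-formula in the variables $z_1,\dots,z_r$ is a Boolean combination of atomic formulas. The atomic formulas are of two kinds: equalities $t(z)=t'(z)$ between Boolean terms, and the unary predicates $Fin(t(z))$ and $C_j(t(z))$ applied to a Boolean term $t$ in the $z_\ell$. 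An equality $t=t'$ is equivalent to $t\triangle t'=0$, i.e.\ to the conjunction of $Fin(t\triangle t')$ and $\neg C_1(t\triangle t')$ (a set is empty iff it is finite with fewer than one atom below it); so modulo Boolean combinations, equalities are subsumed by the $Fin$- and $C_j$-atoms. Hence every quantifier-free $\cL$-formula is a finite Boolean combination of formulas $Fin(t(\bar z))$ and $C_j(t(\bar z))$ where $t$ is a Boolean term.

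Then I would substitute $z_\ell\mapsto[[\Psi_\ell(\bar x)]]$ and absorb the Boolean term $t$ into the formulas. The key observation is that for quantifier-free $L$-formulas $\Psi,\Psi'$ one has, inside $\A_K^{fin}$,
$$[[\Psi(\bar x)]]\wedge[[\Psi'(\bar x)]]=[[\Psi(\bar x)\wedge\Psi'(\bar x)]],\qquad
\overline{[[\Psi(\bar x)]]}=[[\neg\Psi(\bar x)]],$$
and similarly for $\vee$; this is immediate from the definition of the Boolean value via minimal idempotents and the pointwise semantics on the factors $K_{v_e}\cong e\A_K^{fin}$. Therefore a Boolean term $t([[\Psi_1(\bar x)]],\dots,[[\Psi_r(\bar x)]])$ is itself of the form $[[\Psi(\bar x)]]$ for a single quantifier-free $L$-formula $\Psi$ (obtained by performing the same Boolean operations syntactically on the $\Psi_\ell$, which preserves quantifier-freeness, resp.\ quantifier-freeness in sort $\sigma$). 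Feeding this back, $X$ becomes a finite Boolean combination of sets $Fin([[\Psi(\bar x)]])$ and $C_j([[\Xi(\bar x)]])$ with $\Psi,\Xi$ quantifier-free (resp.\ quantifier-free in sort $\sigma$), which is exactly the asserted normal form.

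The main obstacle is not any single hard step but making sure the bookkeeping is airtight: that the Boolean-value map really is a Boolean homomorphism on quantifier-free $L$-formulas in the structure $\A_K^{fin}$ (so that Boolean terms in the $[[\Psi_\ell]]$ collapse to a single $[[\Psi]]$), and that the quantifier elimination for atomic Boolean algebras we are importing from \cite{DM-boole} genuinely covers the language with both $Fin$ and all the $C_j$ and matches the theory $Th(\B_K^f,Fin,(C_j)_j)$ realized inside $\A_K^{fin}$ — in particular that $\B_K^f$ is an \emph{infinite atomic} Boolean algebra with $Fin$ correctly interpreted, which follows since $V_K^f$ is infinite and the minimal idempotents are exactly the atoms. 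Once those two points are in hand, the rest is the routine reduction of an arbitrary quantifier-free Boolean formula to a Boolean combination of $Fin$- and $C_j$-atoms sketched above.
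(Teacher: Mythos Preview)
Your proposal is correct and follows essentially the same route as the paper: apply Theorem~\ref{qe-ad} to obtain a quantifier-free $\cL$-formula $\Theta$ in the Boolean values $[[\Psi_\ell(\bar x)]]$, reduce the atomic $\cL$-subformulas to $Fin$- and $C_j$-atoms (the paper packages this step as the quantifier elimination from \cite{DM-boole}), and then collapse each Boolean term $t([[\Psi_1]],\dots,[[\Psi_r]])$ into a single $[[\Psi]]$ with $\Psi$ quantifier-free. The paper compresses all of this into one sentence; you have simply written out the details, including the observation that $t=t'$ reduces to $\neg C_1(t\triangle t')$ (the additional $Fin$ clause you include is harmless but redundant, since in an atomic Boolean algebra $\neg C_1(s)$ already forces $s=0$).
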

Note that the condition $[[\Psi(\bar x)]]=1$ is equivalent to $\neg C_1([[\neg \Psi(\bar x)]])$. 

These results imply similar results for the ring of adeles $\A_K$ (cf.\ \cite{DM-ad}). 

We will give examples of languages which can be used in Theorem \ref{qe-ad} and Corollary \ref{adelic-qe} in the next section. 

\section{Sortings of valued fields}\label{sec-sorts}

In the fifty years of the history of model theory of valued fields, 
many formalisms and sortings has proved useful. Each of these will provide 
a formalism for rings of finite adeles $\A_K^{fin}$. We have, as of now, seen no need to make 
a systematic study of all the possibilities, but some have engaged our attention. Below are some of the standard 
ingredients (it is intended that each sort has $=$ as a primitive). Given a local field $K$, $v$ denotes the valuation, 
$\cO_K$ the valuation ring, $\cM$ the maximal ideal, $U$ the unit group of 
$\cO_K$, and $\Gamma$ the value group.

(1) The field sort $K$ with primitives $\{+,-,.,0,1\}$,

(2) The multiplicative group sort $K^*$ with primitives $\{.,^{-1},1\}$

(3) The valuation ring sort $\cO_K$ with primitives $\{+,-,.,0,1\}$,

(4) The residue field sort $k$ with primitives $\{+,-,.,0,1\}$,

(5) The extended residue field sort $k\cup \{\infty\}$ with primitives $\{+,-,.,0,1,\infty\}$,

(6) The value group sort $\Gamma$ with primitives $\{+,-,0,<\}$,

(7) The extended value group sort $\Gamma\cup \{\infty\}$ with primitives $\{+,-,0,<,\infty\}$,

(8) The maximal ideal sort $\cM$, with primitives $\{+,-,.,0\}$,

(9) The $3$-sorted structure consisting of the sorts 
(1), (6), and (4) together with the connecting valuation and residue maps.

(10) The Basarab-Kuhlmann sorts $(K,K^*/1+\cM^n,\cO_K/\cM^m)$, for all $n\geq 1$, with primitives $\{.,^{-1},1\}$ for 
the sort $K^*/1+\cM^n$ and $\{+,-,.,0,1\}$ for the sorts $\cO_K/\cM^m$, with the valuation maps and canonical projection maps 
from the field sort into the other sorts.

(11) The many-sorted language $(K,K/1+\cM^n)$, for all $n$, with primitives $\{.,1,0,\Sigma\}$ 
with the valuation map, $\Sigma$ (the hyperring of Krasner, cf. Section \ref{sec-kras}), 
and the connecting map $K \rightarrow K/1+\cM^n$.

(12) The formalisms (10) and (11) with $\cM^n$ replaced by $\cM_{n,K}$ defined in Section \ref{sec-bk}.

(13) Valuations between appropriate sorts.

(14) Residue maps to residue rings.

(15) The place map from (1) to (5),

(16) The cross-section from (6) to (1) or (2),

(17) Angular component maps from $K$ to residue fields. Addition of this to the formalism (9) gives the Denef-Pas 
language. 

Thus there is a large stock of sortings relevant to the adeles. We will concentrate here 
on those connected to the value group sorts and the 
Basarab sorts. 

There are natural extensions of the above languages in which the completions $K_v$, where 
$v\in V_K^f$, of a number field $K$ have uniform quantifier elimination. These languages can be used in 
Theorem \ref{qe-ad} and Corollary \ref{adelic-qe}. A one-sorted example is the extension of the language of rings by 
the Macintyre predicates $P_n(x)$, for all $n\geq 1$, stating that $x$ is an $n$-th power, and the solvability predicates 
$Sol_k(y_0,\dots,y_n)$, $k\geq 1$, stating that $v(y_i)\geq 0$, for $0\leq i\leq n$, and 
the reduction of the polynomial 
$$x^n+y_1x^{n-1}+\dots+y_n$$ 
modulo the maximal ideal $\cM$ has a root in the residue field. Belair \cite{belair} proved that the $p$-adic fields 
$\Q_p$, for all $p$, have uniform quantifier elimination in this language and his proof carries over to the case of all $K_v$. 
Many-sorted examples are the language of Basarab and Kuhlmann (cf.\ \cite{basarab},\cite{kuhlmann} and Section \ref{sec-bk} below) 
sated in (10) above and the language of Denef-Pas \cite{Pas} stated in (17) above. 
In these languages the $K_v$ have uniform quantifier elimination in the field sort relative to the other sorts. 
These languages have been used in motivic integration (cf.\ \cite{DL},\cite{CL},\cite{HK}).

\section{The value monoid: various options}\label{val-monoid}
In connection with the value group, two particular sortings 
stand out, closely connected. The objective is to work out the meaning of these two on the finite adeles $\A_K^{fin}$. We also consider a 
third, connected to the ideles.

\

{\bf First version:} 

\

We have two sorts, corresponding to $K$ and $\Gamma\cup \{\infty\}$, and $v$ connecting them. 
$K$ has usual ring structure, $\Gamma \cup \{\infty\}$ has primitives 
$\{<,+,0,1\}$, and $\Gamma$ is an 
ordered abelian group. For technical reasons we need to 
replace the total order $<$ by lattice operations $\wedge,\vee$ (which 
are respectively min and max).
 
The axioms regarding $\infty$ in the value group sort are:
$$\infty+\infty=\infty,$$
$$\forall g\in \Gamma(\infty+g=\infty=g+\infty=\infty-g),$$
and
$$\forall g\in \Gamma(g<\infty).$$
The most natural thing is to have a constant $\infty$ of the extended value group sort.

On the factors $K_v$, $v(0)=\infty$, and for $x\in K_v^*$, 
$v(x)$ is the standard normalized valuation of $x$. This justifies the laws above. Note that $\infty-\infty$ is not defined, 
since $0/0$ is not. We justify $\infty-g$ by the remark that $0.x^{-1}=0$, $x\neq 0$.

Thus $\Gamma\cup\{\infty\}$ is a commutative monoid under the operation $+$, as is $\Gamma$ (which is in fact a group, 
though $\Gamma\cup\infty\}$ is not). So, at the cost of changing the notion 
of {\it substructure} there is a case, which we accept, for taking the inverse $-$ away from the basic formalism of the 
$\Gamma\cup\{\infty\}$ sort. $\Gamma\cup\{\infty\}$ is an ordered monoid, and $\Gamma$ an ordered group.

Given a number field $K$ with completion $K_v$ at the normalized non-archimedean discrete valuation $v$, 
the product $\prod_{v\in V_K^f} K_v$ has a product valuation $\prod v$ to
$$\prod_{v\in V_K^f} (\Gamma_v\cup \{\infty\})$$ 
where $\Gamma_v$ is $\Z$ for all $v$, and the Feferman-Vaught theory gives us a 
decidable model theory for this. Note that the product 
$\prod_{v\in V_K^f} (\Gamma \cup \{\infty\})$ is a lattice-ordered monoid. Note too that the image of 
$\A_K^{fin}$ is the set of $g$ such that $g(v)\geq 0$ for all but finitely many $v$. 
We shall see a bit later how to mimic in the $\Gamma\cup\{\infty\}$ 
sorting what we did in the adelic setting, i.e. give an internal definition of the Boolean value $[[\Phi(\bar x)]]$. 
That will involve a switch from 
$<$ to the lattice operations $\wedge,\vee$.

Our goal is to represent the product valuation on $\A_K^{fin}$ in terms of a {\it restricted} many-sorted formalism. Because of the 
substructure constraints in the general definition of restricted product, we proceed as follows. To the one-sorted formalism for the 
ring of adeles we add just one more sort, the {\it value sort}, which has as primitives $\{+,\wedge,\vee,0,\infty\}$. 
For a valued field $K$ these get their standard interpretation for $+$ and $0$ and $\infty$, but $\wedge$ and $\vee$ are respectively 
$min$ and $max$ in the ordering. 

Note that the following axioms are true in this value sort in the case of valued fields:

i) Axioms for lattice-ordered commutative monoids with $0$ as neutral element,

ii) Axioms about the distinguished element $\infty$, namely
 $$\infty+\infty=\infty,$$
$$\forall g (\infty \wedge g=g),$$
$$\forall g (\infty \vee g=\infty).$$
These axioms are preserved under products.
Note in contrast that the axiom special to the valued sort of a value field case, namely, 
$$\forall x\forall y(x\wedge y=x) \vee (x\wedge y=y),$$
is {\it not} preserved under products.

We want to carry this sorting to the adeles. So now we consider valued fields as 2-sorted structures 
consisting of a sort for the 
valued field, a sort for the lattice-ordered monoid with $\infty$, and a connecting map $v$. The product of these 
2-sorted structures will have in its first sort a von Neumann regular ring (as product of the field sorts), 
and in its second sort a lattice-ordered commutative monoid with distinguished element $\infty$ 
satisfying the axioms we gave before and in addition the following version of the valuation axioms:
$$\forall f\forall g (v(f.g)=v(f)+v(g)),$$
$$\forall f \forall g (v(f+g)\geq v(f) \wedge v(g)).$$
We are mainly interested in this product valuation on the finite adeles $\A_K^{fin}$. 
As remarked above, the image of the finite adeles $\A_K^{fin}$ under the 
product valuation is contained in the set of $g$ in 
$\prod_{v\in V_K^f} (\Gamma_v \cup \{\infty\})$ such that
$$Fin([[\neg(g\wedge 0=0)]])$$
holds. In fact the set of such $g$ is 
{\it exactly} the image of $\A_K^{fin}$ under the product valuation. This is immediate by lifting such a $g$ back to 
any $f$ with $v(f(v))=g(v)$.

Let us note that the pair 
$$(v(x)\geq 0,y\wedge 0=0)$$ 
satisfies the assumption in Section 4 that allows us to define a 
restricted product. So we can now identify, inside the 2-sorted structure with $K$ and $\Gamma\cup \{\infty\}$, and 
connecting valuation $v$, 
a natural restricted product, namely that with respect to the formulas $v(x)\geq 0$ in the $K$-sort, and the formula 
$y\wedge 0=0$ in the $\Gamma\cup\{\infty\}$ sort. This we call {\it the structure of $\A_K^{fin}$ with totally defined 
product valuation}. By 
Theorem $2_{\mathrm{sort}}$, it has a Feferman-Vaught quantifier-elimination. 

In Section 9 below we will go further, 
eliminating the Boolean scaffolding in the value group sort, in terms of the formalism of that sort. 

\

{\bf Second version:} 

\

We have three sorts, corresponding to $K$, $K^*$, and $\Gamma$, and 
$$v: K^*\rightarrow \Gamma,~~i:K^*\rightarrow K.$$
Again we will use $\wedge,\vee$ on $\Gamma$. Now there is no need for 
$\infty$. We take $K^*$ with primitives $\{.,1\}$, but not with the operation of inverse $\{^{-1}\}$. 

Obviously there is essentially no difference between the first and second versions in terms of expressive power. 
We could if needed make this precise in terms of bi-interpretability.

In the product we have $\prod_v v$, $\prod_v K^*_v$, and $\prod_v \Gamma_v$ and, now we get a restricted product using 
$v(x)\geq 0$ in the $K$-sort, $v(x)\geq 0$ in the $K^*$-sort, and $y\wedge 0=0$ in the $\Gamma$-sort.

Notice that the formula in the second sort actually involves the connecting map between the second and third sorts.

Now the restricted product that emerges consists of the finite adeles $\A_K^{fin}$ with the submonoid of elements with no 
zero coordinate and the product valuation from this set to the restricted product of the $\Gamma_v$. Call this the 
{\it $\infty$-free} restricted product for this version.

\

{\bf Third version:} 

\

Note however that another interesting possibility emerges if we take 
the formula of the middle sort to be $v(x)=0$ and the 
formula of the last sort to be $g=0$. Then the restricted product that emerges consists of the finite adeles with the finite 
ideles as a subgroup together with a valuation from it onto the direct sum of the value groups $\Gamma_v$ (a group!). 
Call this the {\it idelic} restricted product for this version.

The difference between the $\infty$-free and idelic restricted products for this version are:

(i) The former has three sorts, namely $\A_K^{fin}$, the submonoid of elements with no zero coordinates, and the value 
monoid sort.

(ii) The latter has three sorts, namely $\A_K^{fin}$, the ideles, and the submonoid of the value monoid from (i) consisting 
of elements which are zero at all but finitely many coordinates (i.e. a direct sum). 

We show later that (i) defines $(Powerset(V_K^f),Fin)$, and we can ``remove the Boolean scaffolding''. 

In (ii), it turns out that all $[[\Phi(f_1,\dots,f_n)]]$, where $f_j$ belong to the value monoid, 
are finite or cofinite, i.e. belong to the finite/cofinite
subalgebra of $Powerset(V_K^f)$. It turns out that we can {\it define} $Fin$, and then {\it interpret} the finite/cofinite algebra, 
and thereby ``remove the Boolean scaffolding''.


\section{Interpreting the sorts in the field sort}

In \cite{CDLM} it is shown that the valuation ring is uniformly definable in all $K_v$ (by an existential-universal formula of 
the language of rings). From this it follows directly that all the 
sorts in Section \ref{sec-sorts} and the maps listed with them, together with the connecting maps between the sorts 
are uniformly interpretable in the field sort.

The angular component maps are known not to be interpretable, 
but have proved very useful, e.g.\ in motivic matters via the Pas language 
\cite{DL}. 

The ``corpoid'' or ``hyperring'' structure in (11) merits special attention. Fix $n$, and consider the group $K^*/1+\cM^n$ 
under multiplication. This is 
certainly interpretable. There is also a valuation $v$ on $K^*/1+\cM^n$ to $\Gamma$, 
the value group sort, clearly interpretable. Of course, the quotient 
$$\pi_n: K^*\rightarrow K^*/1+\cM^n$$
is interpretable. Finally, the relation $\Sigma_n$ which is the image of the 
graph of addition intersected with $(K^*)^3$ is interpretable and gives 
an ``approximation to addition``. 

Basarab \cite{basarab} showed that one has quantifier elimination for the field sort in terms of essentially extra sorts involving 
higher residue ring sorts  $\cO_K/\cM^n$ and the group sorts $K^*/1+\cM^n$. 

\begin{note} The Basarab construction works for general initial segments $I$ of the value group, 
but there is now no functorial sort. 
$I$ may not be interpretable.\end{note}


\section{Removing the Boolean scaffolding in the value monoids of the $\A_K^{fin}$, for the totally defined, $\infty$-free, 
and idelic restricted products}\label{val-monoid2}

Recall that in the first and second versions discussed in Section \ref{val-monoid} we dealt respectively with

i) A restricted product involving two sorts, the usual valued field sort, and a value group sort which was a lattice-ordered 
monoid $\prod_{v\in V_K^f} (\Gamma_v\cup \{\infty\})$,

ii) A restricted product involving three sorts, the valued field sort, the multiplicative group sort, and a lattice-ordered 
monoid sort $\prod_{v\in V_K^f} \Gamma_v$.

There are only minor differences between these versions, whereas the third version is somewhat different.

\

{\bf Versions 1 and 2:} 

\

The restricted product is relative to the formula $v(x)\geq 0$ in the field sort, and the 
formula $y\wedge 0=0$ in the lattice-ordered monoid sort for both of the restricted products from 
(i) and (ii).
The restricted product is then the adeles with the (surjective) product valuation to the lattice-ordered monoid 
$$\{g: Fin(g\vee 0\neq g)\}$$ 
for both versions.

Now note that for each version the lattice-ordered monoid is itself a restricted product with respect to the formula 
$g\vee 0\neq g$ over the index set $V_K^f$. So the 
question arises as to whether we can eliminate the Boolean scaffolding for the restricted products. 
We do not have the machinery of idempotents 
which we exploited in $\A_K^{fin}$, so the problem is nontrivial. The following argument works for both versions as there is no reference 
to $\infty$.

How to interpret the elements of $V_K^f$? An {\it atom} of the lattice order is a minimal non-zero $e>0$. Such $e$ 
correspond exactly to the $g$ in the restricted product so that $g(v)=0$ except for a single $v_0$, where $g(v_0)=1$. 
The Boolean algebra $\B=Powerset(V_K^f)$ can be identified with the set of all $e$ which are either $0$ or a supremum of atoms. There is 
a largest such element which we call $1$. The Boolean operations on $\B$ are actually the lattice operations $\wedge,\vee$ of the 
lattice-ordered monoid. Note that the complexity 
of definition is higher than in the adele case.

How to define the finite elements of $\B$? Just note that $b$ in $\B$ is {\it finite} if and only if $b$ is invertible 
in the restricted product monoid. (We write $-b$ for the inverse). Note, of course, that we are now living a bit
dangerously notation-wise: $-b$ is the group-theoretic inverse (defined as the $c$ with $b+c=0$), and has little to do with the 
$-b$ in the 
Boolean ring. So again we see that the complexity of our definitions is greater than in the ring case. 
Thus we can interpret $(\B,Fin)$. It remains to define $[[\Phi(\bar x)]]$ 
and it suffices to define or interpret the stalk at an atom $e$, 
uniformly in $e$. 

One should note how the monoid operation $+$ relates to $\B$. The operation 
$+$ is not a group operation on the restricted product, but a trace of the operation 
$-$ survives on $\B$. Namely, if $e\in \B$, the Boolean complement $f$ of $e$ in $\B$ is $1-e$, i.e. $e+f=1$. 

Let $e$ be an atom, corresponding to a valuation $v$. The stalk at $e$ is just the lattice-ordered monoid $\Gamma_v$ 
(=$\Z\cup\{\infty\}$). We {\it identify} it with the substructure consisting of the $h$ such that $h(w)=0$ for all $w\neq v$. 
This we call the {\it internal stalk  at $e$}, and denote it by $\hat{\Gamma}_e$.
First suppose $h$ is any element of the restricted product 
with $h\wedge e=e$ (i.e. $h\geq e$). Then $h\geq 0$ and $h(v)\geq 1$ (the stalks are discretely ordered). If for some $w$, 
$h(w)\neq 0$, then $h(w)\geq 1$. 

Suppose $w_0\neq v$ and $h(w_0)\geq 1$. Define $j_1$ and $j_2$ by 
$$j_1(v)=2h(v),$$
$$j_1(w_0)=h(w_0),$$
$$j_2(v)=h(v),$$
$$j_2(w_0)=2h(w_0),$$
and if $w\neq v,w_0$
$$j_1(w)=h(w),\ j_2(w)=h(w).$$
Now $e\leq j_1\leq 2h$ and $e\leq j_2\leq 2h$, but {\it neither} $j_1\leq j_2$ {\it nor} $j_2\leq j_1$. Thus the 
interval $[e,2h]$ is not linearly ordered.

Conversely, if $h(w)=0$ for all $w\neq v$, then $[e,2h]$ is linearly ordered.

Next, suppose we have $h$ with 
$$h\wedge(-e)=h.$$ 
Then $h(w)\leq 0$ for $w\neq v$, and $h(v)\leq -1$. Then the preceding 
argument, mutatis mutandis, shows that $h(w)=0$ for all $w\neq v$ if and only if $[2h,-e]$ is linearly ordered. 

We conclude:

\begin{lem}\label{def-stalk} $h$ is in the stalk at $e$ if and only if either $h=0$ or $h\geq e$ and $[e,2h]$ is linearly ordered, or 
 $h\leq -e$ and $[2h,-e]$ is linearly ordered.
\end{lem}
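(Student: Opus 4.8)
The statement to prove is Lemma~\ref{def-stalk}, characterizing membership in the internal stalk $\hat\Gamma_e$ at an atom $e$ (corresponding to a valuation $v$) in terms of the lattice operations and the monoid operation of the restricted product $\prod_{w\in V_K^f}(\Gamma_w\cup\{\infty\})$. Most of the work has already been done in the discussion preceding the lemma; the proof is essentially a matter of assembling three implications and checking that the ``linearly ordered interval'' test detects exactly the support condition $h(w)=0$ for all $w\neq v$. I would organize the argument as a clean case analysis matching the trichotomy in the statement.

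First I would dispose of $h=0$: it lies in $\hat\Gamma_e$ trivially, and it is picked out by the first disjunct, so assume $h\neq 0$ henceforth. Next, I would show that if $h\in\hat\Gamma_e$ and $h\neq 0$ then exactly one of the two nontrivial disjuncts holds: since $h(w)=0$ for all $w\neq v$ and $h(v)\in\Z\cup\{\infty\}$ is nonzero, either $h(v)\geq 1$ (so $h\geq e$, using that the stalk $\Gamma_v$ is discretely ordered with $e$ corresponding to the value $1$) or $h(v)\leq -1$ (so $h\leq -e$; note $h(v)=\infty$ falls in the first case). In the first case $[e,2h]$ consists of those $j$ with $j(w)=0$ for $w\neq v$ and $1\leq j(v)\leq 2h(v)$, which is a chain; symmetrically in the second case $[2h,-e]$ is a chain. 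This gives the forward direction.

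For the converse I would invoke the support computations already carried out: if $h\geq e$ and some $w_0\neq v$ has $h(w_0)\neq 0$, then (since $h\geq 0$ forces $h(w_0)\geq 1$ in the discretely ordered stalk) the explicit elements $j_1,j_2$ constructed before the lemma both lie in $[e,2h]$ but are incomparable, so the interval is not linearly ordered; contrapositively, $[e,2h]$ linearly ordered forces $h(w)=0$ for all $w\neq v$, i.e.\ $h\in\hat\Gamma_e$. The case $h\leq -e$ with $[2h,-e]$ linearly ordered is handled by the same construction applied to $-h$ (or directly, \emph{mutatis mutandis}, as the text notes), using that $-e$ is the inverse of the atom $e$ in the monoid and that $h\mapsto -h$ reverses the order and swaps $[e,2h]$ with $[2h,-e]$.

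**Main obstacle.** There is no deep obstacle; the one place requiring care is making sure the two constructed witnesses $j_1,j_2$ genuinely lie in the \emph{restricted} product (their supports differ from that of $h$ only by the value at $v$ and $w_0$, hence still cofinitely zero) and that the argument is symmetric under $h\mapsto -h$ even though $-$ is only a partial operation on $\Gamma\cup\{\infty\}$ — one must check the relevant elements avoid $\infty$, which holds because $h\leq -e$ already forces $h(v)\leq -1$ finite and $h(w)\leq 0$ for $w\neq v$. A secondary point worth stating explicitly is that $e$ corresponds to the value $1$ (not $\infty$) in the stalk, so that ``$h\geq e$'' precisely captures ``$h\geq 0$ and $h(v)\geq 1$'' via discreteness; this is what licenses reading off $h(w_0)\geq 1$ from $h(w_0)\neq 0$ and keeps the interval-chain criterion sharp.
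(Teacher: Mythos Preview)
Your proposal is correct and follows essentially the same approach as the paper: the paper's proof is literally ``Done,'' referring back to the $j_1,j_2$ construction and its mirror image preceding the lemma, and your write-up simply assembles those pieces into an explicit case analysis. The only content you add beyond the paper is the (trivial but worth stating) observation that a nonzero $h\in\hat\Gamma_e$ must satisfy $h\geq e$ or $h\leq -e$ by discreteness of $\Z$, together with the checks that $j_1,j_2$ lie in the restricted product and that the negative case avoids $\infty$; these are welcome clarifications but do not constitute a different route.
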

\begin{proof} Done.
\end{proof}

This is however, not quite enough to get a definition of $[[\Phi(\bar x)]]$ in the style of what we did for $\A_K^{fin}$. 
We need to define the natural map from the value monoid to the stalk at $e$. Our restricted product is a structure of functions on $I$ 
(identified with set of atoms) and the stalk at $v\in I$ (which also call the {\it external stalk}) 
is the set of all $f(v)$, for $f$ in the restricted product. We now show how 
to interpret this.
For this, we show the following. Let 
$\Gamma_e$ (or $\Gamma_e\cup \{\infty\})$ 
denote the stalk at the atom $e$ defined as the set of $h$ in the product such that $h(v)=0$ for all atoms 
$v\neq e$.

By Lemma \ref{def-stalk} there is a definition, in the 
restricted product, for the internal stalk at $e$, $\hat{\Gamma}_e$, where $e$ is an atom. 
Define the 
relation $\equiv_e$ on the restricted product by
$$f\equiv_e g \Leftrightarrow f(e)=g(e).$$
This is a congruence for $\wedge,\vee,+,0,1$. If $f,g\geq 0$, then it is clear that
\begin{equation}\label{positive-stalk}
f(e)=g(e)\Leftrightarrow \forall h \in \hat{\Gamma}_e~(h\geq f\Leftrightarrow h\geq g),
\end{equation}
(i.e. $h\wedge f=f \Leftrightarrow h\wedge g=g$ holds in the restricted product).

So we can define
$$f\equiv_e g \Leftrightarrow \exists f^+,f^-,g^+,g^-~(f^+\geq 0\wedge g^+\geq 0\wedge f^-\leq 0\wedge g^-\leq 0$$
$$\wedge f=f^{+}+f^-\wedge g=g^{+}+g^- \wedge f^+(e)=g^+(e) \wedge f^-(e)=g^-(e)$$
This follows from applying \ref{positive-stalk} to the $f^+$ and $g^+$ and to $f^-$ and $g^-$ with 
the order reversed.
Thus:
\begin{lem} The stalk $\Gamma_e$ at the atom $e$ is interpretable uniformly in $e$ in the restricted product.
\end{lem}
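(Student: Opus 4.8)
The plan is to realize the abstract stalk $\Gamma_e$ (which is $\Z$, or $\Z\cup\{\infty\}$ in the versions carrying the symbol $\infty$) as a definable copy inside the restricted product, together with a definable projection onto it, all uniformly in the atom $e$. For the underlying set I would take the internal stalk $\hat{\Gamma}_e$, which is a definable subset uniformly in $e$ by Lemma~\ref{def-stalk}. First I would record that $\hat{\Gamma}_e$ is closed under the ambient operations $+,\wedge,\vee$ and contains $0$: if two functions vanish off $e$, so does the result of applying $+$, $\wedge$ or $\vee$ to them. Hence on $\hat{\Gamma}_e$ the operations $+,\wedge,\vee,0$ of the interpreted stalk are simply the restrictions of the ambient ones; the evaluation map $h\mapsto h(e)$ is an isomorphism of $(\hat{\Gamma}_e;+,\wedge,\vee,0)$ onto $\Gamma_e$; and in the versions with $\infty$ the symbol $\infty$ of the stalk is interpreted as the top element of $\hat{\Gamma}_e$, i.e.\ the unique $h\in\hat{\Gamma}_e$ with $k\wedge h=k$ for every $k\in\hat{\Gamma}_e$, which is definable uniformly in $e$ because $k\le h\iff k\wedge h=k$, and which corresponds under evaluation to the genuine $\infty$. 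Thus the interpreted structure has the correct isomorphism type.

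It then remains to supply the natural map from the restricted product onto this copy of the stalk — the piece flagged as missing just before the statement of the lemma. Using the definable congruence $\equiv_e$ constructed above, whose definability rested on the canonical decomposition $f=(f\vee 0)+(f\wedge 0)$ together with \eqref{positive-stalk}, I would take the projection to be
$$\pi_e=\{(f,h)\colon h\in\hat{\Gamma}_e\ \wedge\ f\equiv_e h\}.$$
This is a definable set, and it is the graph of a total function: for each $f$ in the restricted product the function $h$ that agrees with $f$ at $e$ and vanishes at every other atom lies in $\hat{\Gamma}_e$ and satisfies $f\equiv_e h$, and it is the unique element of $\hat{\Gamma}_e$ with this property, by the definition of $\equiv_e$ on $\hat{\Gamma}_e$. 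Since $\equiv_e$ is a congruence for $+,\wedge,\vee$ and fixes the constants, $\pi_e$ is a homomorphism, and composing it with the evaluation isomorphism recovers $f\mapsto f(e)$. Every formula appearing here mentions $e$ only as a parameter and holds for each atom $e$, which gives the uniformity in $e$.

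I do not anticipate a genuine obstacle here: the substantive work has already been carried out, namely the definition of $\hat{\Gamma}_e$ in Lemma~\ref{def-stalk} and the definability of $\equiv_e$. The only points that need a little care are bookkeeping ones: checking that the decomposition $f=(f\vee 0)+(f\wedge 0)$ behaves correctly at coordinates where $f$ takes the value $\infty$, so that $\equiv_e$ really expresses equality of the $e$-coordinate even in the presence of $\infty$; and checking that the stalk's $\infty$ must be interpreted as the top of $\hat{\Gamma}_e$ rather than by a naive formula — for instance, meeting the ambient constant function $\infty$ with the atom $e$ yields $e$, not the desired element. Once these are settled, the interpretation — domain $\hat{\Gamma}_e$, restricted operations, definable constants, and definable projection $\pi_e$ — is complete.
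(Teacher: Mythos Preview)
Your proposal is correct and rests on exactly the same two ingredients as the paper: the definability of the internal stalk $\hat{\Gamma}_e$ from Lemma~\ref{def-stalk}, and the definability of the congruence $\equiv_e$ via a positive/negative decomposition together with \eqref{positive-stalk}. The only difference is presentational: the paper interprets $\Gamma_e$ as the quotient of the restricted product by $\equiv_e$, whereas you interpret it as the definable set $\hat{\Gamma}_e$ of canonical representatives and then supply the projection $\pi_e$ as a definable function via $\equiv_e$. Your version has the mild advantage that the domain is a genuine definable subset rather than a definable quotient, and your use of the term-definable decomposition $f=(f\vee 0)+(f\wedge 0)$ in place of the paper's existential decomposition is a harmless (and slightly cleaner) variant; but substantively the two arguments coincide.
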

\begin{proof} Done.
\end{proof}
So we identify the stalk at $e$ with the set of congruence classes modulo $\equiv_e$, thereby 
giving a definable meaning to the condition that the 
stalk at $e$ satisfies
$$\Phi(f_1(e),\dots,f_k(e)),$$
and so we define 
$[[\Phi(\bar f)]]$ as the set of $e$'s where this holds, and have completed the removal of the Boolean scaffolding. 


\

{\bf Version 3:} 

\

Now the valuation is defined on the group of finite ideles, and the value monoid is the direct sum 
of the $\Gamma_v$, $v\in V_K^f$. We define Boolean operations as in previous versions, but in 
this case we do not get a Boolean algebra, just a lattice because all elements are {\it finite}, there is no top element, and 
no element is complemented (though we have relative complements). 

Note that some of the discussion of Case 1 goes 
through, namely that giving the interpretation of the stalk at $e$ and the natural projection to the stalk. Thus 
we can define 
$$e\in [[\Phi(\bar f)]].$$
Now we show that we can define $Fin$ in the restricted product. Given a formula $\Phi(\bar x)$, we can define 
$Fin([[\Phi(\bar f)]]$ by 
$$Fin([[\Phi(\bar f)]]\Leftrightarrow \exists f\forall e (e\in [[\Phi(\bar f)]] \rightarrow e\leq f).$$
Let $\B_{fin/cofin}$ denote the Boolean algebra of finite and cofinite subsets of $Powerset(V_K^f)$. 
\begin{lem} For any formula $\Phi(\bar x)$, and $\bar f$ from the direct sum $\bigoplus_{v\in V_K^f} \Gamma_v$, 
the Boolean value $[[\Phi(\bar f)]]$ belongs to $\B_{fin/cofin}$.
\end{lem}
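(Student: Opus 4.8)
The plan is to exploit two simple facts: that each coordinate function $f_j$ has finite support, since it lies in the direct sum $\bigoplus_{v\in V_K^f}\Gamma_v$, and that all the stalks of this value monoid are literally one and the same structure. So I would start by writing $\bar f=(f_1,\dots,f_k)$ and setting $T=\{v\in V_K^f:\ f_j(v)\neq 0\text{ for some }j\leq k\}$, which is a finite set. Then I recall, from the interpretation of the stalk at an atom that was carried over to Version 3 above, that the atoms of the lattice $\bigoplus_{v\in V_K^f}\Gamma_v$ correspond exactly to the valuations $v\in V_K^f$, and that for the atom $e$ attached to $v$ the relation $e\in[[\Phi(\bar f)]]$ holds if and only if the stalk at $e$ satisfies $\Phi(f_1(v),\dots,f_k(v))$.

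Next I would observe that the stalk at every atom is the same structure, namely $\Z=\Gamma_v$ with its value-sort primitives $\{+,\wedge,\vee,0\}$ (together with $1$, if present); hence whether $\Gamma_v\models\Phi(a_1,\dots,a_k)$ holds depends only on the tuple $(a_1,\dots,a_k)\in\Z^k$ and not on $v$. Let $t\in\{0,1\}$ record the truth value of $\Phi(0,\dots,0)$ in $\Z$. For every $v\notin T$ we have $f_j(v)=0$ for all $j$, so $v\in[[\Phi(\bar f)]]\Leftrightarrow t=1$. Consequently, if $t=1$ then $V_K^f\setminus T\subseteq[[\Phi(\bar f)]]$, so $[[\Phi(\bar f)]]$ is cofinite, and if $t=0$ then $[[\Phi(\bar f)]]\subseteq T$, so $[[\Phi(\bar f)]]$ is finite. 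Either way $[[\Phi(\bar f)]]\in\B_{fin/cofin}$, which is what is claimed.

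I do not expect a genuine obstacle here; the one point that needs care — and it is precisely what the uniform-in-$e$ interpretation of the stalk established earlier supplies — is that the restricted product really does carry, at each of its atoms, a copy of $\Z$ in the same value-sort language, so that the truth value of $\Phi$ at the all-zero tuple is honestly independent of the coordinate. Everything else is bookkeeping with finite supports.
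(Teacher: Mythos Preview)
Your argument is correct and is essentially the same as the paper's: both observe that elements of the direct sum have finite support, so at all but finitely many atoms $e$ one has $\bar f(e)=0$, and since every stalk is the same structure $\Z$, the truth of $\Phi(0,\dots,0)$ is a fixed bit that decides finiteness or cofiniteness of $[[\Phi(\bar f)]]$. The paper compresses this into two lines; you have simply unpacked the bookkeeping.
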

\begin{proof}
For almost all atoms $e$, we have $\bar f(e)=0$. Hence for almost all atoms $e$, the formula
$\Phi(\bar f(e))$ is $\Phi(0)$, hence is either true or false in $\Z$.
\end{proof}
So we have in effect defined $Fin$, but no Boolean algebra. However, we can {\it interpret} 
the finite-cofinite algebra $\B_{fin/cofin}$ in the direct sum of the $\Gamma_v$. 
\begin{lem} The Boolean algebra $\B_{fin/cofin}$ is interpretable in $\bigoplus_{v\in V_K^f} \Gamma _v$.
\end{lem}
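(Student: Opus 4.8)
The plan is to realise $\B_{fin/cofin}$ directly as an interpretable structure inside $L:=\bigoplus_{v\in V_K^f}\Gamma_v$, which we view as a lattice-ordered abelian group with $+,0,\wedge,\vee$ (the inverse $-$ being definable from $+$ and $0$). The guiding observation is that a finite subset of $V_K^f$ is \emph{literally} an element of $L$, namely its indicator function, whereas a cofinite subset is not; so a cofinite set will be coded by a pair consisting of (the indicator function of) its complement together with a one-bit flag.

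First I would isolate the definable set $E\subseteq L$ of indicator functions of finite subsets of $V_K^f$, i.e. the $a\ge 0$ all of whose coordinates lie in $\{0,1\}$. Because every element of the direct sum has finite support, $E$ is exactly $0$ together with the finite joins of atoms, and it is defined by
$$a\ge 0\ \wedge\ \neg\,\exists b\,\bigl(0<b\ \wedge\ b\le a\ \wedge\ b\le a-b\bigr):$$
if some coordinate $a(v)$ is $\ge 2$ one may take $b$ to be the atom supported at $v$, while if every coordinate of $a$ is $0$ or $1$ then no such $b$ exists. This is precisely the step where the group operation $-$, unavailable in the ring/idempotent picture, does the work. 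On $E$ the lattice operations $\wedge,\vee$ compute intersection and union of the associated finite sets, and set difference is definable by $F_1\setminus F_2:=F_1-(F_1\wedge F_2)$, which again lies in $E$; thus $(E;\wedge,\vee,\setminus,0)$ is, via $a\mapsto\{v:a(v)=1\}$, isomorphic to the lattice of finite subsets of $V_K^f$.

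Next I would write down the interpretation of $\B_{fin/cofin}$: its domain is $E\times E$, with the definable equivalence relation
$$(a_1,b_1)\sim(a_2,b_2)\ \Longleftrightarrow\ a_1=a_2\ \wedge\ \bigl(b_1=0\leftrightarrow b_2=0\bigr),$$
so the quotient is $E\times\{0,1\}$ — the two-element flag set being the parameter-free definable quotient of $E$ into $\{0\}$ and $E\setminus\{0\}$. A class $(F,0)$ is intended to name the finite set $F$ and $(F,1)$ the cofinite set $V_K^f\setminus F$; on representatives one puts
$$(F_1,0)\wedge(F_2,0)=(F_1\wedge F_2,0),\qquad (F_1,1)\wedge(F_2,1)=(F_1\vee F_2,1),\qquad (F_1,0)\wedge(F_2,1)=(F_1\setminus F_2,0),$$
$$\overline{(F,0)}=(F,1),\qquad \overline{(F,1)}=(F,0),\qquad 0=(0,0),\qquad 1=(0,1),$$
with $\vee$ read off from $\wedge$ and complementation; each is quantifier-free on representatives, hence definable, and visibly respects $\sim$. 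Finally one checks that $(F,0)\mapsto F$, $(F,1)\mapsto V_K^f\setminus F$ is a bijection of the interpreted structure onto $\B_{fin/cofin}$ carrying these operations to the genuine Boolean operations, the only cases needing a word being the mixed ones, e.g. $(F_1,0)\wedge(F_2,1)$ corresponds to $F_1\cap(V_K^f\setminus F_2)=F_1\setminus F_2$. The substantive content is entirely in the two preceding steps — definability of $E$ inside the lattice-ordered group, and the parameter-free coding of the flag so that cofinite sets, which are not themselves elements of $L$, nevertheless get named — and beyond that the argument is bookkeeping, so I would not anticipate a genuine obstacle.
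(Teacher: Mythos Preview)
Your proof is correct and follows essentially the same strategy as the paper: identify the finite subsets of $V_K^f$ with a definable set of elements of the direct sum, then double the universe with a one-bit flag so that cofinite sets are coded by their finite complements, and read off the Boolean operations case by case. The differences are only in execution: you give an explicit first-order formula for the set $E$ of indicator functions (exploiting the group subtraction, which is available here since the direct sum is a genuine group) and arrange a parameter-free flag by quotienting $E$ into its zero and nonzero parts, whereas the paper fixes a nonzero parameter $\beta$ to serve as the flag and is less explicit about isolating the domain.
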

\begin{proof} We interpret a Boolean algebra $\B$ in $\B_0:=\bigoplus_{v\in V_K^f} \Gamma _v$ 
as follows. Choose an element $\beta \in \B_0 \setminus \{0\}$ and 
let
$$\B_{\beta}=\B_0 \times \{0\} \cup \B_0 \times \{\beta\}.$$ 
usual on $\B_0$, and on $\B_0 \times \{\beta\}$ define
$$(x,\beta) \wedge (y,\beta) := (x\vee y,\beta),$$
$$(x,\beta) \vee (y,\beta) := (x\wedge y,\beta).$$
and
$$(x,0)\wedge (y,\beta)=(x\wedge \overline{y},0),$$
where $x\wedge \overline{y}$ is defined as the supremum of atoms $\gamma$ such that 
$\gamma \leq x$ and $\gamma \nleqq y$. Put 
$$\overline{(x,0)}=(x,\beta),$$
$$\overline{(x,\beta)}=(x,0),$$
and
$$(x,0) \vee (x,\beta)=\neg(\overline{(x,0)}\wedge \overline{(y,\beta)})=\overline{((x,\beta) \wedge (y,0))}.$$
Thus $\B_{\beta}$ is a Boolean algebra. Clearly, different choices of $\beta$ give isomorphic Boolean 
algebras. 

Given $(x,0)\in \B_0\times \{0\}$, define $Fin((x,0))\Leftrightarrow Fin(x)$, and given 
$(x,\beta)\in \B_0\times \{\beta\}$, define $Fin((x,\beta))\Leftrightarrow \neg Fin(x)$.
\end{proof}

In any case, we have shown that the Boolean scaffolding can be removed, up to interpretation, 
but probably not up to definition, 
and the lattice-ordered monoid is decidable and has a quantifier-elimination in all the cases.

\begin{remark} In the language of Boolean algebras $\B_{fin/cofin}$ is an elementary substructure of $Powerset(V_K^f)$, 
 but not in the Boolean language with a predicate $Fin$ for finite subsets.
\end{remark}
\begin{proof} This follows from the quantifier elimination theorem for infinite atomic Boolean algebras 
in the Boolean language enriched by unary predicates $C_j(x)$ stating the there are at least $j$ distinct atoms 
below $x$ (cf.\ \cite{DM-boole}) since the Boolean algebras $\B_{fin/cofin}$ and $Powerset(V_K^f)$ have the same atoms.
\end{proof}


\section{The Basarab sorts and hyperrings}\label{sec-kras}
\

This notion of hyperring was defined by Krasner \cite{krasner} and used by Connes-Consani \cite{CC}. We recall this notion. 
A set $H$ is called a 
{\it canonical hypergroup} (cf.\ \cite{CC}) if there is multivalued addition
$$+: H\rightarrow Powerset(H)$$  
(where the variables $x,y,z$ range over elements in $H$) satisfying the following axioms:

(1) $\forall x \forall y (x+y=y+x)$,

(2) $\forall x \forall y ((x+y)+z=x+(y+z))$,

(3) $\forall x (0+x=x+0=x)$,

(4) $\forall x \exists !y(0\in x+y)$ ($y$ is written as $-x$),

(5) $\forall x \forall y \forall z (x\in y+z \Rightarrow z\in x-y)$ (=$x+(-y)$).

The operation $+$ is called hyperaddition. 
The hyperring axioms require in addition that multiplication gives a monoid with multiplicative identity, and we have 
$$\forall r\forall s\forall t (r(s+t)=rs+rt),$$
$$\forall r\forall s\forall t (s+t)r=sr+tr,$$
$$0\neq 1.$$

A hyperfield $H$ is a hyperring such that it's nonzero elements form a group under multiplication. 


Let $K$ denote a local field. In \cite{krasner}, Krasner defined a hyperring associated to $K$. This definition can be slightly 
generalized as follows. 

Let $\Delta$ be a subset of $\Gamma$ with $0\in \Gamma$, and closed downwards in the sense that $g\leq h$ and $h\in \Delta$ imply 
$g\in \Delta$. Such a $\Delta$ is called here {\it convex}. Note that if $-g\in \Delta$ then $\Delta+g$ is also convex. 
We denote $\cM_{\Delta}=\{x: v(x)>\Delta\}$. This is an ideal in $\cO_K$ (since $\infty>\Gamma$). Clearly 
$1+\cM_{\Delta}$ is a subgroup of $U$. 

Let $G_{\Delta}$ be the group $K^*/1+\cM_{\Delta}$ and $R_{\Delta}$ the ring $\cO/\cM_{\Delta}$. Let $H_{\Delta}$ be the monoid 
$K/1+\cM_{\Delta}$ (of orbits for the action of $1+\cM_{\Delta}$ on $K$).  Note that the valuation $v$ is $0$ on $1+\cM_{\Delta}$, 
and so induces 
''valuations`` $v$ from $G_{\Delta}$ to $\Gamma$, and $H_{\Delta}$ to $\Gamma\cup\{\infty\}$. 
Let 
$$P_{\Delta}=\{x\in H_{\Delta}: v(x)\geq 0\},$$ and
$$U_{\Delta}=\{x\in H_{\Delta}: v(x)=0\}.$$
Note that $0\in P_{\Delta}$.

The set $H_{\Delta}$ carries the structure of a hyperfield. More generally, by the construction of Krasner \cite{krasner} (cf.\ 
also \cite{CC}), given a commutative unital ring $R$ and a subgroup $G$ of its multiplicative group, 
the set of all orbits of $R$ under $G$, denoted by $R/G$ carries the structure of a hyperring defined as follows:
\begin{itemize}
 \item Hyperaddition: $xG+yG=\{(xG+yG)/G\}$ (a {\it a subset} of $R/G$),
\item Multiplication:: $xG.yG=(xy)G$.
\end{itemize}
In the above we use the standard notations $A+B=\{a+b: a\in A, b\in B\}$, called the sumset of $A$ and $B$; 
and $A/G=\{aG: a\in A\}$ for a subset $A\subseteq R$. 
We are using $+$ for the hyperaddition by slight abuse of language since we use the same notation for the sumset of the $G$-orbits, 
but it will hopefully 
be clear from the context. Hyperaddition is a multi-valued addition. 

The axioms for canonical hypergroup are all satisfied in Krasner's construction $R/G$, with 
$0=0G$ and $-(xG)=(-x)G$. For uniqueness in Axiom (4), note that if $0=a+b$, where 
$a\in xG$, $b\in yG$, then
$$b=-a.g$$
for some $g\in G$, so
$b\in (-x)G$, so $yG=(-x)G$.

The other axioms are verified in \cite{krasner}, with $1=1G$, 
provided $0\neq 1$ in the ring $R$. 

Another useful way to think of the hyperaddition (following Krasner \cite{krasner}) is as follows. 
Given $xG$ and $yG$, the sumset $xG+yG$ is a union of cosets and the hyper sum 
$xG+yG$ is the set of these cosets. So
$$xG+yG=\{zG \in R/G: zG\subseteq xG+yG\},$$
where, by slight abuse of language, 
the sum on the right hand side is sumset, and on the left hand side is hyperaddition.

Model-theoretically, it is more natural to replace the ''hyperoperation`` $+$ by $\Sigma$, the graph of that operation, 
namely
$$H\models \Sigma(x,y,z) \Leftrightarrow z\in x+y,$$
and we will often use this version. 

We define the {\it language of hyperrings} to be the language with a 
predicate for multiplication, and predicate for $\Sigma$, and constants for $0,1$. This is a natural language 
for hyperrings. 

We do not take the time to write out the hyperring axioms in terms of the primitives $\{.,1,0,\Sigma\}$. 
This is easily done, and will often be used. 

In the model theory of Henselian valued fields $K$, some important work of Basarab \cite{basarab} and Kuhlmann \cite{kuhlmann} 
is closely related 
to the construction above. We take $R=K$, and $G$ to be $1+\cM_{\Delta}$, where $\Delta$ is an initial segment of the value 
group, $0\in \Delta$, and $\cM_{\Delta}$ is the ideal of elements of the valuation ring consisting of the $x$ with $v(x)>\Delta$. 
(We make no further restriction on $\Delta$).

The hyperring
$$(K/1+\cM_{\Delta},.,1,0,\Sigma)$$
is called (by us) the Krasner-Basarab hyperring associated to $\Delta$, and denoted $Kras(\Delta)$. It has some extra structure 
coming from the
valuation on $K$. Note that $1+\cM_{\Delta}$ is a subgroup of the units of $\cO_K$, and the action of $1+\cM_{\Delta}$ preserves 
the valuation. Thus the valuation induces a map
$$v: K/1+\cM_{\Delta}\longrightarrow \Gamma\cup\{\infty\}$$
satisfying $v(xy)=v(x)+v(y)$ with usual conventions about $v(x)+\infty$ and $\infty+\infty$.

Inside $K/1+\cM_{\Delta}$ we consider $\cO_K/1+\cM_{\Delta}$, a hyperring by the same construction. 
One checks easily that $K/1+\cM_{\Delta}$ is a hyperring extension of $\cO_K/1+\cM_{\Delta}$, in the sense of 
\cite{CC}, and $K/1+\cM_{\Delta}$ is a hyperfield. We denote by
$$\pi_{\Delta}: K\rightarrow K/1+\cM_{\Delta}$$
the canonical projection map.

The surjection $\cO_K\rightarrow \cO_K/1+\cM_{\Delta}$ clearly respects division. Since
$$v(x)=v(y)$$
holds in $\cO_K$ 
if and only if $x$ and $y$ divide each other, we can define unambiguously $v(x(1+\cM_{\Delta}))$ as $v(x)$. Then 
the relation
$$v(x)\leq v(y)$$
on $\cO_K/1+\cM_{\Delta}$ is definable by $x|y$ (which denotes $x$ divides $y$). 
Also every non-zero element 
in $K/1+\cM_{\Delta}$ is of the form $ab^{-1}$, with $a,b\in \cO_K/1+\cM_{\Delta}$. We have to check how $v$ relates to the 
hyperaddition $+$. 
In fact it is easily checked that
$$\Sigma(x,y,t) \Rightarrow v(t)\geq \mathrm{min}\{v(x),v(y)\}.$$

In \cite{basarab} and \cite{kuhlmann}, 
Basarab and Kuhlmann work with $K^*/1+\cM_{\Delta}$, i.e.\ a multiplicative group. This is 
part of the hyperring (in fact hyperfield) $H_{\Delta}$ (namely the multiplicative group of its nonzero elements), 
and is quantifier-free definable in $H_{\Delta}$ (since we have a constant for $0$).

They also use the (higher residue) rings $\cO_K/\cM_{\Delta}$, and we show in Section \ref{def-res-ring} 
that this is actually interpretable in $K/1+\cM_{\Delta}$, using the primitives 
$\{., \Sigma$ and $P_{\Delta}\}$ for all valued fields 
(and without $P_{\Delta}$ for all Henselian valued fields with finite or pseudofinite residue field). The definitions are 
uniform across all the stated fields and all $\Delta$.

Note that on the sort $K^*/1+\cM_{\Delta}$, with $v:K^*/1+\cM_{\Delta} \rightarrow \Gamma$, the extra structure of hyperring on 
$K^*/1+\cM_{\Delta}$ is given by the 3-place relation which is the image of the graph of addition on $(K^*)^3$. Taking $K_v$ to be 
the family of completions $K_v$ of a number field $F$ under a non-archimedean absolute value $v$, we 
have the maps of products
$$\prod_{v\in V_F^f} K_v^*\rightarrow \prod_{v\in V_K^f} K_v^*/1+\cM_{\Delta}\rightarrow \prod_{v\in V_F^f} \Gamma,$$
giving rise to several restricted products of fields and hyperfields, 
where the value monoid in the restricted product is what we considered in Sections \ref{val-monoid} and \ref{val-monoid2}, 
which will be studied in Section \ref{restricted-hyper}.

\section{Uniform definition of valuation on the hyperrings}\label{def-val}
In this section, we will show that $P_{\Delta}$ is definable in $H_{\Delta}$ 
uniformly for all Henselian valued fields $K$ with finite or pseudofinite residue field, for any 
convex subset $\Delta$ of $\Gamma$ containing $0$. The definition is an 
adaptation to the hyperfield situation of the definition given in \cite{CDLM} of $\cO_K$ in $K$ uniformly for all $K$ 
satisfying the above conditions. We use the notation of \cite{CDLM}. 

Let $P_2^{AS}(x)$ be the formula $\exists y (x=y^2+y)$. Let $T^{+}(x)$ be the formula 
$$x\neq 0 \wedge \neg P_2^{AS}(x) \wedge \neg P_2^{AS}(x^{-1}).$$
Let $P_2^{AS,Kras}(x)$ be the ''hyperversion`` of $P_2^{AS}(x)$, namely,
$$\exists y \ \Sigma(y^2,y,x).$$
Let $T^{+,Kras}(x)$ be
$$x\neq 0\wedge \neg P_2^{AS,Kras}(x) \wedge \neg P_2^{AS,Kras}(x^{-1}).$$
We need to review the use of $T^+$ in giving a uniform definition of $\cO_K$ in $K$, for $K$ Henselian with $k$ finite or 
pseudofinite (an assumption we now make, certainly true in all nonarchimedean completions of number fields). 

We consider $T^+(K)$ and $T^+(k)$ the sets defined in $K$, resp. $k$, by the formula $T^+(x)$.
\begin{lem}\label{val-0}\begin{itemize}
\item $T^+(K)$ is a subset of the units $\cO_K^{\times}$,
\item If $v(\alpha)=0$ and $res(\alpha)\in T^+(k)$, then $\alpha\in T^+(K)$.
\end{itemize}
\end{lem}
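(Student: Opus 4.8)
The plan is to prove the two bullet points about $T^+$ essentially by reducing everything to facts about the residue field and invoking Hensel's lemma, following the template of \cite{CDLM}.

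For the first bullet, I would argue by contraposition on the value: suppose $\alpha \in K$ with $v(\alpha) \neq 0$; I want to show $\alpha \notin T^+(K)$, i.e. that $\alpha = 0$, or $P_2^{AS}(\alpha)$ holds, or $P_2^{AS}(\alpha^{-1})$ holds. If $\alpha = 0$ we are done. Otherwise, without loss of generality (replacing $\alpha$ by $\alpha^{-1}$, which swaps the two Artin--Schreier conditions) we may assume $v(\alpha) > 0$, so $\alpha \in \cM$. Then I claim the equation $y^2 + y = \alpha$ has a solution in $\cO_K$: reducing mod $\cM$ gives $\bar y^2 + \bar y = 0$, which has the solution $\bar y = 0$, and the derivative of $t^2 + t$ at $0$ is $1$, a unit; so by Henselianity of $K$ there is $y \in \cO_K$ with $y^2 + y = \alpha$. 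Hence $P_2^{AS}(\alpha)$ holds and $\alpha \notin T^+(K)$. This shows $T^+(K) \subseteq \cO_K^\times$. (One should also note $0 \notin T^+(K)$ by the explicit clause $x \neq 0$ in the formula, so the inclusion lands in the units.)

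For the second bullet, suppose $v(\alpha) = 0$ and $\res(\alpha) \in T^+(k)$. I must show $\alpha \neq 0$, $\neg P_2^{AS}(\alpha)$, and $\neg P_2^{AS}(\alpha^{-1})$. Since $v(\alpha) = 0$, certainly $\alpha \neq 0$ and $\alpha^{-1} \in \cO_K^\times$ with $\res(\alpha^{-1}) = \res(\alpha)^{-1}$, and $\res(\alpha)^{-1} \in T^+(k)$ as well since $T^+$ is visibly stable under inversion. So it suffices to show: if $\beta \in \cO_K^\times$ and $\res(\beta) \notin P_2^{AS}(k)$, then $\beta \notin P_2^{AS}(K)$. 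If $\beta = y^2 + y$ with $y \in K$, then from $v(\beta) = 0$ one sees $v(y) \geq 0$ (if $v(y) < 0$ then $v(y^2) = 2v(y) < v(y)$ so $v(y^2+y) = 2v(y) < 0$, contradiction); hence $y \in \cO_K$ and reducing gives $\res(\beta) = \res(y)^2 + \res(y) \in P_2^{AS}(k)$, a contradiction. Therefore $\neg P_2^{AS}(\alpha)$, and symmetrically $\neg P_2^{AS}(\alpha^{-1})$, so $\alpha \in T^+(K)$.

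The only genuinely delicate point is the use of Hensel's lemma for the Artin--Schreier polynomial $t^2+t-\alpha$ in the first bullet, and this is completely standard given that $K$ is Henselian; the residue-field hypothesis (finite or pseudofinite) is not even needed for these two statements — it will be needed later when one analyzes $T^+(k)$ itself. I do not anticipate any real obstacle here; the argument is a direct adaptation of the corresponding lemma in \cite{CDLM}, and the valuation-theoretic bookkeeping (that $v(y^2+y) = \min\{2v(y), v(y)\} = 2v(y)$ when $v(y) < 0$, and that solutions of $y^2+y = \alpha$ with $\alpha$ integral must be integral) is routine.
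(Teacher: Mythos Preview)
Your argument is correct, and it is essentially what lies behind the paper's proof: the paper does not spell out any details here but simply cites Lemmas~2 and~3 of \cite{CDLM}, and your Hensel-lemma computation for $t^2+t-\alpha$ together with the elementary valuation bookkeeping in the second bullet is exactly the content of those lemmas. Your remark that the finite/pseudofinite hypothesis on $k$ is not used at this stage is also accurate.
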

\begin{proof} Follows from \cite[Lemmas 2 and 3]{CDLM}.\end{proof}
We note the for $k$ pseudofinite, $T^+(k)$ is infinite (cf.\ \cite{CDLM}).

Much deeper is the following.
\begin{thm}\label{val-large} There exists an integer $N>0$ such that if $k$ has cardinal at least $N$, then
 $$\cO_K=\{a+b+cd: a,b,c,d \in T^+(K)\}.$$
\end{thm}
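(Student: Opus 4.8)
The idea is to reduce the theorem, which concerns a Henselian valued field $K$ with residue field $k$, to a corresponding statement purely about the residue field $k$ and then invoke the known finite-field/pseudofinite-field input. The key technical device is Lemma~\ref{val-0}: the set $T^+(K)$ lies inside the units $\cO_K^\times$, and it contains every $\alpha$ with $v(\alpha)=0$ whose residue lies in $T^+(k)$. So the first step is to show that the residue-level analogue holds: there is an integer $N>0$ so that whenever $|k|\geq N$ (with $k$ finite or pseudofinite), every element of $k$ can be written as $a'+b'+c'd'$ with $a',b',c',d'\in T^+(k)$. This is a statement about finite and pseudofinite fields, and it should follow from a counting/Lang--Weil argument (or a direct reference to \cite{CDLM}, where $T^+$ and its size are analyzed): $T^+(k)$ has density bounded below, so the threefold sumset-with-a-product already covers all of $k$ once $|k|$ is large enough. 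I expect this residue-field statement is essentially extracted from \cite{CDLM}, so the bulk of the novelty here is the lift to $K$.

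\textbf{Lifting to $\cO_K$.} Granting the residue statement, fix $\beta\in\cO_K$. I want to write $\beta=a+b+cd$ with $a,b,c,d\in T^+(K)$. Reduce modulo $\cM$: $\mathrm{res}(\beta)=a'+b'+c'd'$ with $a',b',c',d'\in T^+(k)$. Now lift each of $a',b',c',d'$ to elements $a_0,b_0,c_0,d_0\in\cO_K$ with $v=0$ and the prescribed residues; by the second clause of Lemma~\ref{val-0} these lifts lie in $T^+(K)$. Then $a_0+b_0+c_0d_0\equiv\beta\pmod{\cM}$, so $\beta=a_0+b_0+c_0d_0+\mu$ for some $\mu\in\cM$. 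The remaining task is to absorb the error $\mu$. The natural move is to adjust one of the four terms, say replace $a_0$ by $a_0+\mu$; this has the same residue $a'\in T^+(k)$, and since $v(a_0)=0$ we still have $v(a_0+\mu)=0$, so again by Lemma~\ref{val-0} it lies in $T^+(K)$ — provided one is careful that $T^+$ is genuinely a union of cosets of $1+\cM$ in the relevant range, i.e.\ that membership in $T^+(K)$ for a unit depends only on the residue. That last point is exactly what Lemma~\ref{val-0} delivers (the residue of $a_0+\mu$ is still $a'$), so $\beta=(a_0+\mu)+b_0+c_0d_0$ is the desired representation, giving $\cO_K\subseteq\{a+b+cd:a,b,c,d\in T^+(K)\}$. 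The reverse inclusion is immediate from the first clause of Lemma~\ref{val-0}, since $T^+(K)\subseteq\cO_K^\times\subseteq\cO_K$ and $\cO_K$ is a ring.

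\textbf{Main obstacle.} The genuinely hard part is the residue-field combinatorial estimate: showing that for all sufficiently large finite (and all pseudofinite) $k$, the set $T^+(k)$ is large enough that $T^+(k)+T^+(k)+T^+(k)\cdot T^+(k)=k$, with a \emph{uniform} threshold $N$ independent of the characteristic and of whether $k$ is finite or pseudofinite. Here $T^+(k)$ is defined by an Artin--Schreier-type condition ($x\neq0$, and neither $x$ nor $x^{-1}$ is of the form $y^2+y$), so in odd characteristic this is really about additive characters of $\F_p$-traces and one needs character-sum bounds (Weil) that degrade gracefully; in characteristic $2$ the condition $P_2^{AS}$ is genuinely additive and one argues with the image of the Artin--Schreier map. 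The pseudofinite case follows from the finite case by \L{}o\'s's theorem once the bound $N$ is uniform. Since \cite{CDLM} already carries out precisely this kind of analysis for the definition of $\cO_K$ in $K$, I would quote it rather than redo it; the contribution of the present section is to observe that the \emph{same} $T^+$ works in the hyperfield setting via the $\Sigma$-version $T^{+,Kras}$, which is the subject of the following lemmas.
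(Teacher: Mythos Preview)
The paper does not give a proof of this theorem: it is stated without proof as a ``much deeper'' input, implicitly quoted from \cite{CDLM} (the proof of the next theorem, Theorem~\ref{val-l}, is simply ``the result follows from the proof of \cite[Theorem~2]{CDLM}''). So there is no proof in the paper to compare against; the authors are invoking the result as a black box.

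That said, your reconstruction is correct and is essentially how the argument in \cite{CDLM} goes. Your lifting step is sound: by the second clause of Lemma~\ref{val-0}, membership of a unit in $T^+(K)$ depends only on its residue (indeed, by Hensel's Lemma the converse of that clause also holds for units, so $T^+(K)\cap\cO_K^\times$ is exactly the preimage of $T^+(k)$ under the residue map), and this makes the ``absorb $\mu\in\cM$ into $a_0$'' trick work cleanly. The reverse inclusion is, as you say, immediate from the first clause. The genuinely hard ingredient---that $T^+(k)+T^+(k)+T^+(k)\cdot T^+(k)=k$ for all finite or pseudofinite $k$ of size at least some uniform $N$---is precisely the Weil-bound/character-sum estimate carried out in \cite{CDLM}, and you are right to defer to that reference rather than reprove it.
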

This is used to obtain the following comprehensive result:
\begin{thm}\label{val-l} There exists an integer $l>0$ such that for all $K$ as above
 $$\cO_K=\{0,1\}+\{a+b+cd: a,b,c,d\in T^+(K)\}$$
$$\cup \{x: \exists y (T^+(y)\wedge T^+(x^l-1+y))\},$$
where $A+B$ denotes the sumset of two sets $A$ and $B$.
\end{thm}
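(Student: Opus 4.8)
The plan is to deduce Theorem~\ref{val-l} from Theorem~\ref{val-large} by a uniformity argument that absorbs the finitely many residue characteristics (and residue cardinalities) where the bound $N$ of Theorem~\ref{val-large} fails. First I would observe that by Theorem~\ref{val-large}, the set $S(K):=\{a+b+cd : a,b,c,d\in T^+(K)\}$ already equals $\cO_K$ whenever $|k|\geq N$; so the issue is only the finite list of $K_v$ (over all number fields, only finitely many pairs $(p,f)$ with $p^f<N$ occur as residue data of a non-archimedean completion, and for each such $K_v$ the residue field is a fixed finite field). For those finitely many cases one needs a \emph{single} formula, not depending on $K$, that picks out $\cO_K$. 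This is exactly what the displayed union in the statement is engineered to do: the $\{0,1\}$ summand and the clause $\exists y(T^+(y)\wedge T^+(x^l-1+y))$ provide enough extra slack to capture the valuation ring in the small-residue-field cases.

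The key steps, in order: (i) invoke Theorem~\ref{val-large} to fix $N$ and note $\cO_K=S(K)$ for $|k|\geq N$; hence for these $K$ the right-hand side of the claimed identity contains $\cO_K$, and one checks the reverse inclusion using Lemma~\ref{val-0} (every element of $T^+(K)$ is a unit, so $S(K)\subseteq\cO_K$, and $\{0,1\}+S(K)\subseteq\cO_K$, and the clause $\exists y(T^+(y)\wedge T^+(x^l-1+y))$ forces $v(x^l-1)\geq 0$ together with $v(x^l-1+y)=0$, whence $v(x)\geq 0$ once $l$ is chosen divisible by the relevant ramification/torsion data — this is where $l$ enters). (ii) For each of the finitely many residue-data pairs $(p,f)$ with $p^f<N$, choose $l$ (taking a common multiple over all these pairs, e.g. divisible by each $p^f-1$ and by each relevant $e$) so that raising to the $l$-th power maps $\cO_K^\times$ into $1+\cM$ in a controlled way, making the clause $\exists y(T^+(y)\wedge T^+(x^l-1+y))$ hold for every $x\in\cO_K$ with $v(x)=0$ in these cases; and use that $T^+(k)$ is nonempty (it is infinite for pseudofinite $k$, and for the finitely many fixed finite residue fields in play one checks nonemptiness directly, enlarging $l$ or the constant set $\{0,1\}$ if a particular small field is troublesome — this is the one place requiring a genuine finite verification). (iii) Assemble: the union of $\{0,1\}+S(K)$ with the $l$-th power clause covers $\cO_K$ in all cases, uniformly, and each piece is contained in $\cO_K$, giving equality; since the formula is a fixed first-order formula in the hyperring/field primitives independent of $K$, we get the uniform definition claimed.

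The main obstacle I expect is step (ii): pinning down a single $l$ that simultaneously works for all the finitely many small residue fields, and verifying by hand that in each such case the extra clause $\exists y(T^+(y)\wedge T^+(x^l-1+y))$ together with $\{0,1\}+S(K)$ genuinely exhausts $\cO_K$ — in particular handling residue fields so small that $T^+(k)$ is nearly empty or $S(k)\neq k$. One has to be careful that $x\mapsto x^l$ does not collapse too much (losing the ability to detect $v(x)=0$) while still pushing units close enough to $1$ that the $T^+$ test applies; balancing these forces the precise choice of $l$, and I would expect the proof in \cite{CDLM}-style to do this by a compactness/pigeonhole argument over the finite set of exceptional $(p,f)$ rather than an explicit computation. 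The archimedean-versus-non-archimedean distinction plays no role here since we only ever consider the non-archimedean $K_v$, which are all Henselian with finite residue field, so Lemma~\ref{val-0} and Theorems~\ref{val-large} apply verbatim.
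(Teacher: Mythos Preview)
The paper does not give a self-contained proof here: its entire argument is the one-line citation ``the result follows from the proof of \cite[Theorem 2]{CDLM}''. So there is nothing to compare your sketch against within the paper itself; what you have written is a reasonable outline of the kind of argument that \cite{CDLM} carries out.

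Two points in your sketch deserve sharpening. First, the containment of the right-hand side in $\cO_K$ requires no special choice of $l$: if $T^+(y)$ and $T^+(x^l-1+y)$ hold then both elements have valuation $0$ by Lemma~\ref{val-0}, whence $v(x^l-1)\geq 0$, so $v(x^l)\geq 0$ and $v(x)\geq 0$ for \emph{any} $l\geq 1$. The integer $l$ enters only in the forward inclusion for small residue fields, not in the reverse inclusion as you suggest. Second, your reduction to ``finitely many $K_v$'' is not quite right: the theorem is stated for all Henselian $K$ with finite or pseudofinite residue field, and for each finite residue field $k$ with $|k|<N$ there are infinitely many such $K$ (arbitrary ramification, arbitrary characteristic-zero coefficient field, etc.). The correct reduction is to finitely many \emph{residue fields}: choosing $l$ divisible by $|k|-1$ for each of the finitely many $k$ with $|k|<N$ forces $x^l\in 1+\cM$ for every unit $x$, uniformly in $K$, independent of ramification. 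The residual verification you flag---that $T^+(k)$ is nonempty and suitably placed for the very smallest $k$, and that the $\{0,1\}$-shift handles the remaining elements---is exactly the finite case-check that \cite{CDLM} performs.
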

\begin{proof} The result follows from the proof of \cite[Theorem 2]{CDLM}.
 
\end{proof}
Now we take this definition and find a ''hyperversion``.
\begin{lem}\label{lift} Let $x\in K$. Suppose $T^{+,Kras}(x(1+\cM))$ holds in $H_{\Delta}$. Then $x\in T^+(K)$.
 
\end{lem}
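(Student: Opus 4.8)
### Proof proposal for Lemma \ref{lift}

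The plan is to trace through the relationship between the hyperfield predicate $T^{+,Kras}$ on $H_\Delta = K/1+\cM_\Delta$ and the field predicate $T^+$ on $K$ itself, showing that the hyperversion is \emph{weaker} (easier to satisfy), so that if it fails on the field it must fail on the hyperfield. More precisely, I would prove the contrapositive-flavored statement by unwinding the definitions: $T^{+,Kras}(x(1+\cM))$ is the conjunction $x(1+\cM)\neq 0 \wedge \neg P_2^{AS,Kras}(x(1+\cM)) \wedge \neg P_2^{AS,Kras}(x^{-1}(1+\cM))$, and I must deduce the three corresponding clauses for $T^+(x)$ in $K$. The nonvanishing clause is immediate since $x(1+\cM)\neq 0$ in $H_\Delta$ means $x\notin 1+\cM$-orbit of $0$, i.e.\ $x\neq 0$ in $K$.

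The heart of the matter is the implication $P_2^{AS}(x) \Rightarrow P_2^{AS,Kras}(x(1+\cM))$, and symmetrically for $x^{-1}$. This is the ``lifting'' direction made trivial: if $x = y^2 + y$ for some $y\in K$, then in $H_\Delta$ we have $x(1+\cM) \in y^2(1+\cM) + y(1+\cM)$ as a hypersum (since the image of a genuine sum lies in the sumset of the images, by the very definition of Krasner hyperaddition $aG+bG = \{(aG+bG)/G\}$), which is exactly $\Sigma(y(1+\cM)^2, y(1+\cM), x(1+\cM))$, i.e.\ $P_2^{AS,Kras}(x(1+\cM))$ holds witnessed by $y(1+\cM)$. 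Contrapositively, $\neg P_2^{AS,Kras}(x(1+\cM)) \Rightarrow \neg P_2^{AS}(x)$. Applying this to $x$ and to $x^{-1}$ (noting $x^{-1}(1+\cM) = (x(1+\cM))^{-1}$ since $\pi_\Delta$ is a multiplicative homomorphism on $K^*$), we get $\neg P_2^{AS}(x)$ and $\neg P_2^{AS}(x^{-1})$, hence $T^+(x)$ holds in $K$.

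I expect the only genuine subtlety — and hence the main point to check carefully rather than the main obstacle — is the bookkeeping that $\Sigma(y^2, y, x)$ in $H_\Delta$ is correctly read off from $x = y^2 + y$ in $K$ via $z \in aG + bG \Leftrightarrow zG \subseteq aG+bG$: one needs that the specific coset $x(1+\cM_\Delta)$ is contained in the sumset $y^2(1+\cM_\Delta) + y(1+\cM_\Delta)$, which holds because $x = y^2+y$ is a single element of that sumset and the sumset is a union of full cosets closed under the $1+\cM_\Delta$-action. There is no deep content here; the lemma is essentially the observation that ``true addition maps into hyperaddition,'' so genuine representability of $x$ (or $x^{-1}$) as $y^2+y$ is inherited by the quotient, making $T^{+,Kras}$ a formally weaker condition whose validity forces $T^+$ below it. I would write this up in a few lines, being careful to invoke the definition of hyperaddition from Section \ref{sec-kras} explicitly and to note that the case $\Delta$ giving $\cM_\Delta = \cM$ (so $H_\Delta = H_{\cM}$, matching the notation $1+\cM$ in the statement) is the one in force.
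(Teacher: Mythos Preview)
Your proof is correct and follows essentially the same approach as the paper: both argue by contraposition that $P_2^{AS}(x)$ in $K$ (witnessed by some $y$) forces $\Sigma(w^2,w,x(1+\cM_\Delta))$ in $H_\Delta$ with $w=y(1+\cM_\Delta)$, hence $P_2^{AS,Kras}(x(1+\cM_\Delta))$, and then repeat for $x^{-1}$. Your closing remark about ``the case $\Delta$ giving $\cM_\Delta=\cM$'' is a slight misreading: the paper is simply writing $\cM$ for $\cM_\Delta$ in the lemma statement, not restricting to a special $\Delta$.
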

\begin{proof} Obviously $x\neq 0$. If $P_2^{AS}(x)$ holds in $K$, then for some $y$ in $K$
 $$x=y^2+y.$$
 But then, taking $w=y(1+\cM)$
 $$H_{\Delta}\models \Sigma(w^2,w,x(1+\cM))$$ contradicting 
 $$H_{\Delta}\models T^{+,Kras}(x(1+\cM)).$$
 So 
 $$K\models \neg P_2^{AS}(x).$$
 Similarly
 $$K\models \neg P_2^{AS}(x^{-1}).$$
 \end{proof}
 \begin{lem}\label{square} Let $K$ be a valued field with residue characteristic different from $2$. 
 Let $x\in K$ be an element of value $0$. 
 Then $x$ is a square in $K$ if and only $x(1+\cM_{\Delta})$ is a square in $K/1+\cM_{\Delta}$.
 \end{lem}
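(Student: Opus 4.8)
The plan is to prove both implications separately, using the fact that the canonical projection $\pi_{\Delta}\colon K\to K/1+\cM_{\Delta}$ is a group homomorphism on multiplicative groups and that $v(x)=0$ means $x\in U$. For the forward direction, if $x=y^2$ in $K$ with $v(x)=0$, then necessarily $v(y)=0$ (since $2v(y)=v(x)=0$ and $\Gamma$ is torsion-free), so $y\in U$, and applying $\pi_{\Delta}$ gives $x(1+\cM_{\Delta})=\bigl(y(1+\cM_{\Delta})\bigr)^2$ in the group $K^*/1+\cM_{\Delta}$. This direction uses nothing about residue characteristic.

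For the converse, suppose $x(1+\cM_{\Delta})$ is a square in $K/1+\cM_{\Delta}$, say $x(1+\cM_{\Delta})=\bigl(z(1+\cM_{\Delta})\bigr)^2=z^2(1+\cM_{\Delta})$ with $z\in K^*$. Then $x=z^2 u$ for some $u\in 1+\cM_{\Delta}$. Again $v(z)=0$ since $v(x)=0$ and $v(u)=0$. So it suffices to show every element of $1+\cM_{\Delta}$ is a square in $K$; then $x=z^2u=(zw)^2$ where $w^2=u$. This is where the residue characteristic hypothesis enters: for $u=1+m$ with $m\in\cM_{\Delta}\subseteq\cM$, the polynomial $T^2-u$ has reduction $T^2-1=(T-1)(T+1)$ mod $\cM$, which has the simple root $1$ in the residue field precisely because $1\neq -1$ there, i.e. because the residue characteristic is not $2$. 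Hence by Hensel's lemma (applicable since $K$ is, implicitly in this context, Henselian — or one argues directly via the valued-field completeness available in the adelic setting) $T^2-u$ has a root $w\in 1+\cM\subseteq\cO_K^{\times}$.

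The main obstacle, and the only place where care is needed, is the Henselianity/completeness assumption required to solve $T^2-u=0$: the lemma as stated says only ``valued field,'' so I would either (i) note that in all applications here $K$ is a local field hence Henselian, or (ii) observe that $1+\cM_{\Delta}$ being a subgroup of $U$, one has $\cM_{\Delta}\subseteq\cM$, and invoke the standard fact that in a Henselian valued field of residue characteristic $\neq 2$ one has $1+\cM\subseteq (K^{\times})^2$. If one wants to avoid Henselianity entirely, the statement is still true for \emph{complete} discretely valued fields by Newton's method, which covers all the $K_v$ of interest. I would phrase the proof to invoke ``Hensel's lemma for $T^2-u$, $u\in 1+\cM$'' and remark that residue characteristic $\neq 2$ guarantees the required separability of the reduction; everything else is the torsion-freeness of $\Gamma$ together with the homomorphism property of $\pi_{\Delta}$.
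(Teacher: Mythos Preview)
Your proof is correct and follows essentially the same route as the paper: both arguments reduce the converse direction to a single application of Hensel's Lemma to a quadratic, using that $v(2)=0$ when the residue characteristic is not $2$. The only cosmetic difference is that the paper applies Hensel directly to $T^2-x$ with approximate root $y$ (where $x-y^2\in\cM_{\Delta}$), whereas you first factor $x=z^2u$ and then apply Hensel to $T^2-u$ with approximate root $1$; these are the same computation up to multiplying through by the unit $z^2$. Your explicit flagging of the implicit Henselianity hypothesis is apt---the paper's proof likewise invokes Hensel's Lemma without stating it in the hypotheses, relying on the ambient context of local (hence Henselian) fields.
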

\begin{proof} We only have to show the right to left direction. 
Suppose that $x(1+\cM_{\Delta})$ is a square in $K/1+\cM_{\Delta}$. Then 
for some $y\in K$,
$$x(1+\cM_{\Delta})=y^2(1+\cM_{\Delta}).$$
Hence $x-y^2\in \cM_{\Delta}$. 
Let $f(y):=x-y^2$. Then $f'(y)=2y$. Note that $v(y)=0$ (since $v(x-y^2)>0$ and $v(x)=0$). Thus $$v(f'(y))=v(2y)=0.$$ 
Applying Hensel's Lemma we deduce that $x$ is a square in $K$.
\end{proof}

\begin{lem}\label{image} Suppose $x\in K$ and $x\in T^+(K)$. Then $T^{+,Kras}(x(1+\cM))$ holds in $H_{\Delta}$.
 
\end{lem}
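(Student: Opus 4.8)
The plan is to verify directly the three conjuncts of $T^{+,Kras}(x(1+\cM))$ in $H_\Delta$. Nontriviality, $x(1+\cM)\neq 0$, is immediate from $x\neq 0$ (part of $x\in T^+(K)$). The two substantive claims, $\neg P_2^{AS,Kras}(x(1+\cM))$ and $\neg P_2^{AS,Kras}(x^{-1}(1+\cM))$, are symmetric: the defining formula of $T^+$ is symmetric in $t$ and $t^{-1}$, so $x^{-1}\in T^+(K)$ whenever $x\in T^+(K)$, and $(x(1+\cM))^{-1}=x^{-1}(1+\cM)$ in the hyperfield $H_\Delta$; hence it suffices to prove $\neg P_2^{AS,Kras}(x(1+\cM))$ for an arbitrary $x\in T^+(K)$ and then apply the result to $x^{-1}$.

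So first I would assume, for contradiction, that $P_2^{AS,Kras}(x(1+\cM))$ holds in $H_\Delta$, i.e.\ $\Sigma(w^2,w,x(1+\cM))$ for some $w\in H_\Delta$; choose $y\in K$ with $w=y(1+\cM)$. Unwinding the sumset description of Krasner hyperaddition recalled above, $\Sigma(w^2,w,x(1+\cM))$ means $x(1+\cM)\subseteq y^2(1+\cM)+y(1+\cM)$ inside $K$, so there are $g_1,g_2\in 1+\cM$ with $x=y^2g_1+yg_2$. By Lemma \ref{val-0}, $x\in\cO_K^{\times}$, so $v(x)=0$; comparing this with $v(y^2g_1)=2v(y)$ and $v(yg_2)=v(y)$ rules out both $v(y)>0$ and $v(y)<0$, hence $v(y)=0$. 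Writing $g_i=1+m_i$ with $m_i\in\cM$ gives $x=y^2+y+\varepsilon$ with $\varepsilon=y^2m_1+ym_2\in\cM$.

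Next I would lift this approximate solution of $t^2+t=x$ to an exact solution in $K$, contradicting $\neg P_2^{AS}(x)$ and hence $x\in T^+(K)$. Put $f(t)=t^2+t-x$, so $v(f(y))=v(\varepsilon)>0$ and $f'(y)=2y+1$. When the residue characteristic is $2$ this is routine: $v(2)>0$ forces $v(2y+1)=0$, and Hensel's Lemma at $y$ produces $z\in K$ with $z^2+z=x$. When the residue characteristic is odd I would complete the square: from $x=y^2g_1+yg_2$ one computes $(2yg_1+g_2)^2=4g_1x+g_2^2$, so $4g_1x+g_2^2$ is a square in $K$, is a unit, and is congruent to $4x+1$ modulo $\cM$; Lemma \ref{square} (equivalently, Hensel applied to $t^2-(\cdot)$) then transfers squareness to $4x+1$, yielding $z\in K$ with $(2z+1)^2=4x+1$, i.e.\ $z^2+z=x$.

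The main obstacle is exactly this lifting step in odd residue characteristic when $2y+1$ fails to be a unit, i.e.\ when $x$ reduces to $-1/4$ in the residue field: then $4x+1\in\cM$, the crude congruence $4g_1x+g_2^2\equiv 4x+1\pmod{\cM}$ no longer pins down the square class, and the completed square $(2yg_1+g_2)^2$ only returns information about $(4x+1)^2$. Resolving this delicate point — exploiting that $m_1,m_2$ may be taken deep in $\cM_\Delta$ (with $\cM_\Delta\subseteq\cM^{k}$ for a suitable $k$ determined by $\Delta$) and tracking valuations of $4x+1$ precisely — is the heart of the argument; everything else (passing from $\Sigma$ to the additive identity in $K$, the valuation bookkeeping forcing $v(y)=0$, and the symmetry reduction to one case) is routine.
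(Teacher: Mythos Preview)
Your proof tracks the paper's closely. The reduction to a single $\neg P_2^{AS,Kras}$ clause via the $x\leftrightarrow x^{-1}$ symmetry is a clean touch (the paper simply repeats the argument for $x^{-1}$). In residue characteristic $2$ your argument is the paper's: unwind $\Sigma(Y^2,Y,X)$ to $y^2+y\equiv x\pmod{\cM_\Delta}$, note $v(2y+1)=0$ since $v(2)>0$, and apply Hensel. In odd residue characteristic the paper also completes the square, asserting that $P_2^{AS,Kras}(X)$ in $H_\Delta$ is equivalent to $(1+4x)(1+\cM_\Delta)$ being a square there, and then invoking Lemma~\ref{square} to lift to $P_2(1+4x)$ in $K$; this is exactly your route.

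The obstruction you isolate --- the case $\overline{x}=-1/4$ in $k$, equivalently $v(1+4x)>0$ --- is genuine, and the paper does not dispose of it either: its appeal to Lemma~\ref{square} requires the element to have value $0$, which fails precisely here. Your proposed rescue via depth of $\cM_\Delta$ cannot work uniformly, since for the principal choice $\Delta=\{g:g\le 0\}$ one has $\cM_\Delta=\cM$ and there is no extra depth to exploit. In fact the lemma as stated appears to fail in that case. Take $K=\Q_3$ with this $\Delta$, and let $x=11/4$. Then $1+4x=12$ and $1+4x^{-1}=27/11$ each have odd $3$-adic valuation, hence are non-squares, so $x\in T^+(\Q_3)$. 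But $x\equiv 2\pmod{\cM}$, so $X=x(1+\cM)=2(1+\cM)$; taking $Y=1\cdot(1+\cM)=Y^2$, the sumset $Y^2+Y=(1+\cM)+(1+\cM)=2+\cM$ coincides with $X$ as a subset of $K$, whence $\Sigma(Y^2,Y,X)$ holds and $T^{+,Kras}(X)$ fails. So the difficulty you flagged is not a patchable gap in your write-up but an apparent defect in the statement itself.
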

\begin{proof} The argument is divided into two cases of whether the residue characteristic is $2$ or not.

\

{\bf Case 1}: $k$ has characteristic $2$.

\

By Lemma \ref{val-0}, $v(x)=0$. So $x(1+\cM_{\Delta})\neq 0$. Suppose
$$P_2^{AS,Kras}(x(1+\cM_{\Delta}))$$
holds in $H_{\Delta}$. Then for some $y$,
\begin{equation}\label{sigma1}
\Sigma(y^2(1+\cM_{\Delta}),y(1+\cM_{\Delta}),x(1+\cM_{\Delta}))
\end{equation}
holds in $H_{\Delta}$. Then $y\neq 0$, and for some $\sigma,\tau$ in $K$ with 
$$y^2(1+\cM_{\Delta})=\sigma(1+\cM_{\Delta}),$$
$$y(1+\cM_{\Delta})=\tau(1+\cM_{\Delta}),$$
we have
$$(\sigma+\tau)(1+\cM_{\Delta})=x(1+\cM_{\Delta}).$$
Note that if one of $\sigma,\tau$ has negative valuation, then so has $y$ and then 
$$v(\sigma)\neq v(\tau)$$ 
and $v(x)<0$, a contradiction. So each of 
$y,\sigma,$ and $\tau$ has non-negative valuation. But if one has positive valuation, then all have, 
so $$v(\sigma+\tau)>0$$
while $v(x)=0$. So we conclude that 
$$v(y)=v(\sigma)=v(\tau)=0.$$ 

From \ref{sigma1} we have that 
$$x\in x(1+\cM_{\Delta})\subseteq y^2(1+\cM_{\Delta})+y(1+\cM_{\Delta}),$$
hence
$$x=y^2+y^2\lambda+y+y\rho,$$
for elements $\lambda,\ \rho\in \cM_{\Delta}$. Thus
$$v(y^2+y-x)>\Delta.$$
Let $f(y):=y^2+y-x$. So $f(y)\in \cM$. But
$$v(f'(y))=v(2y+1),$$
and $2y\in \cM$, hence $2y+1\notin \cM$ and $v(f'(y))=0$. 
By Hensel's Lemma, we get $P_2^{AS}(x)$. But $x\in T^+(K)$, contradiction. 
So
$$\neg P_2^{AS,Kras}(x(1+\cM_{\Delta}))$$
holds in $H_{\Delta}$. Similarly
$$\neg P_2^{AS,Kras}(x^{-1}(1+\cM_{\Delta}))$$
holds in $H_{\Delta}$. So 
$$T^{+,Kras}(x(1+\cM_{\Delta}))$$
holds in $H_{\Delta}$. This completes the proof in Case 1.

\

{\bf Case 2}: $k$ has characteristic different from $2$.

\

In this case it is easy to see that the condition $P_2^{AS}(x)$ is equivalent to the condition 
$P_2(1+4x)$ in both $K$ and in $K/1+\cM_{\Delta}$. 

As in Case 1 we know that $v(x)=0$, $x(1+\cM)\neq 0$, and we assume that
$$P_2^{AS,Kras}(x(1+\cM))$$
holds in $H_{\Delta}$. Thus $K$ satisfies
$$P_2(1+4(x(1+\cM_{\Delta}))).$$
Applying Lemma \ref{square} we deduce that 
$P_2(1+4x)$ holds in $K$. Hence $P_2^{AS}(x)$ holds in $K$. The proof is now completed as in Case 1.
\end{proof}

Note that Lemmas \ref{lift} and \ref{image} show for $x\in K$ that 
$$K\models T^+(x) \Leftrightarrow H_{\Delta}\models T^{+,Kras}(x(1+\cM_{\Delta})).$$
To complete our work, it is convenient to introduce in the hyperrings the definable predicate 
$\Sigma_3(x,y,z,t)$, defined as
$$\exists w (\Sigma(x,y,w) \wedge \Sigma(w,z,t)).$$
Now fix $l$ as in Theorem \ref{val-l}. Define $\Theta_1(X)$ as 
$$\exists A,B,C,D [T^{+,Kras}(A)\wedge T^{+,Kras}(B)\wedge T^{+,Kras}(C)$$
$$\wedge T^{+,Kras}(D) \wedge \Sigma_3(A,B,CD,X)]$$
and $\Theta_2(X)$ as
$$\exists Y \exists W (T^{+,Kras}(Y) \wedge T^{+,Kras}(W)\wedge \Sigma_3(X^l,-1,Y,W)).$$
Now define $\Theta^{Kras}(X)$ as 
$$\Theta_1(X)\vee \Theta_2(X) \vee \exists S (\Theta_2(S) \wedge \Sigma_3(X,-1,S)).$$
Then we have:

\begin{thm}\label{unif-def-val} Uniformly for all Henselian 
valued fields $K$ with finite or pseudofinite residue field we have,
 $$X\in P_{\Delta} \Leftrightarrow H_{\Delta}\models \Theta^{Kras}(X).$$
\end{thm}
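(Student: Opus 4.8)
The plan is to reduce the hyperring statement $X\in P_\Delta$ to the already-proven field-level characterization of $\cO_K$ from Theorem~\ref{val-l}, using the bridge established in Lemmas~\ref{lift} and~\ref{image}, which together give $K\models T^+(x)\Leftrightarrow H_\Delta\models T^{+,Kras}(x(1+\cM_\Delta))$ for every $x\in K$. First I would fix $X\in H_\Delta$ and write $X=x(1+\cM_\Delta)$ for some $x\in K$, noting that $X\in P_\Delta$ means precisely $v(x)\geq 0$, i.e.\ $x\in\cO_K$ (the valuation on $K/1+\cM_\Delta$ being well-defined by the discussion preceding this section). So the theorem asserts: $x\in\cO_K\Leftrightarrow H_\Delta\models\Theta^{Kras}(x(1+\cM_\Delta))$.

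For the forward direction I would use Theorem~\ref{val-l}: if $x\in\cO_K$ then either $x\in\{0,1\}+\{a+b+cd:a,b,c,d\in T^+(K)\}$ or $x\in\{x:\exists y(T^+(y)\wedge T^+(x^l-1+y))\}$. In the first case, write $x=\varepsilon + (a+b+cd)$ with $\varepsilon\in\{0,1\}$; applying the bridge lemma, each $T^+(a),\dots,T^+(d)$ transfers to $T^{+,Kras}$ of the corresponding class, and the field equation $a+b+cd=x-\varepsilon$ lifts to a $\Sigma_3$-statement, because $\Sigma(\alpha(1+\cM_\Delta),\beta(1+\cM_\Delta),\gamma(1+\cM_\Delta))$ holds whenever $\alpha+\beta=\gamma$ in $K$ (membership of a coset in a sumset of cosets). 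The case $\varepsilon=0$ gives $\Theta_1$, and $\varepsilon=1$ gives the third disjunct $\exists S(\Theta_2(S)\wedge\Sigma_3(X,-1,S))$ via $S=x-1$ --- wait, more carefully, $\varepsilon=1$ should be handled by noting $x-1\in\{a+b+cd:\dots\}$, so I would need to recheck that the third disjunct is set up to absorb exactly the ``$+1$'' case; this matches $\Theta_2(S)\wedge\Sigma_3(X,-1,S)$ only if $\Theta_2$ is the right target, so I would instead route $\varepsilon=1$ through whichever disjunct names the set $\{a+b+cd\}$ shifted by $-1$. The second case of Theorem~\ref{val-l} transfers directly to $\Theta_2(X)$, again using the bridge lemma on the two $T^+$ conditions and lifting $x^l-1+y$ to $\Sigma_3(X^l,-1,Y,W)$.

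For the reverse direction, suppose $H_\Delta\models\Theta^{Kras}(X)$ with $X=x(1+\cM_\Delta)$. Each disjunct produces witnesses in $H_\Delta$ that are classes of field elements; I would lift them to $K$, use the bridge lemma (specifically Lemma~\ref{lift}) to pull each $T^{+,Kras}$ back to $T^+$ in $K$, and then use the fact that $\Sigma_3$ statements in $H_\Delta$, when lifted, give field elements whose sum lies in the appropriate coset, hence agrees with the target modulo $\cM_\Delta$. The key point is that each of the sets appearing in Theorem~\ref{val-l} --- $\{a+b+cd:a,b,c,d\in T^+(K)\}$, its translate by $\{0,1\}$, and $\{x:\exists y(T^+(y)\wedge T^+(x^l-1+y))\}$ --- is contained in $\cO_K$ (this is visible from Theorem~\ref{val-large} and Lemma~\ref{val-0}, since $T^+(K)\subseteq\cO_K^\times$ and $\cO_K$ is closed under addition and multiplication), and $\cO_K$ is saturated under adding elements of $\cM_\Delta\subseteq\cO_K$; so any field element congruent mod $1+\cM_\Delta$ to an element built from these pieces is itself in $\cO_K$, whence $v(x)\geq 0$ and $X\in P_\Delta$.

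The main obstacle I anticipate is the bookkeeping around the projection $\pi_\Delta$ interacting with the hyperaddition: a single hyperaddition relation $\Sigma(a(1+\cM_\Delta),b(1+\cM_\Delta),c(1+\cM_\Delta))$ does \emph{not} force $a+b=c$ in $K$, only that $c(1+\cM_\Delta)$ is one of the cosets comprising the sumset $a(1+\cM_\Delta)+b(1+\cM_\Delta)$, i.e.\ $c\equiv a'+b'\pmod{1+\cM_\Delta}$ for suitable representatives. So in the reverse direction one must be careful to only conclude statements about $\cO_K$-membership that are insensitive to the $1+\cM_\Delta$-ambiguity; this is exactly why the target set $\cO_K$ (rather than some finer subset of $K$) is the right thing, and why $P_\Delta$ rather than a lift of it is what gets defined. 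The remaining work --- verifying that the multi-case structure of $\Theta^{Kras}$ precisely mirrors the three-part description in Theorem~\ref{val-l}, including the correct placement of the ``$\{0,1\}+$'' shift and the ``$x^l-1$'' twist --- is routine once the coset arithmetic above is set up.
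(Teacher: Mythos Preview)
Your proposal is correct and follows the paper's approach: both directions use the bridge Lemmas~\ref{lift} and~\ref{image} together with Theorem~\ref{val-l}, and your ``$\cO_K$ is closed under $1+\cM_\Delta$-perturbation'' framing of the reverse direction is exactly the content of the paper's explicit valuation-tracking in its Claims~1 and~2. The confusion you flagged about the ``$+1$'' shift is real and worth noting: the paper, in the forward direction, silently restates the characterization with the $\{0,1\}+$ attached to the \emph{second} set (the $x^l-1+y$ clause) rather than the first, so that the third disjunct of $\Theta^{Kras}$ reads ``$X-1$ satisfies $\Theta_2$''; this rearrangement comes from the proof of \cite[Theorem~2]{CDLM} rather than from Theorem~\ref{val-l} as literally stated here.
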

\begin{proof} Suppose first $X\in P_{\Delta}$, and let $X=x(1+\cM_{\Delta})$. Then 
$v(x)\geq 0$. So by Theorem \ref{val-l}
$$K\models \exists a,b,c,d \in T^+(K) (x=a+b+cd) \vee \exists y (T^+(y)\wedge T^+(x^l-1+y)) \vee$$
$$\exists z (x=z+1 \wedge \exists w (T^+(w) \wedge T^+(z^l-1+w)).$$
So,
$$H_{\Delta}\models \Theta^{Kras}(x(1+\cM_{\Delta})).$$
Conversely, suppose
$$H_{\Delta}\models \Theta^{Kras}(x(1+\cM_{\Delta}).$$ 
This condition is a disjunction of three clauses and we examine each separately.

\begin{claim}\label{clause-1} $H_{\Delta}\models \Theta_1(X) \Rightarrow X\in P_{\Delta}.$
\end{claim}
\begin{proof} Assume that 
$$H_{\Delta}\models \exists A,B,C,D [T^{+,Kras}(A)\wedge T^{+,Kras}(B)\wedge T^{+,Kras}(C)$$
$$\wedge T^{+,Kras}(D) \wedge \Sigma_3(A,B,CD,X)].$$
Choose
$$A=a(1+\cM_{\Delta}),$$ 
$$B=b(1+\cM_{\Delta}),$$ 
$$C=c(1+\cM_{\Delta}),$$
and
$$D=d(1+\cM_{\Delta}),$$
where $a,b,c,d\in K$, to witness the quantifiers. By Lemma \ref{lift}
$$K\models T^+(a)\wedge T^+(b) \wedge T^+(c) \wedge T^+(d).$$
The meaning of $\Sigma_3(A,B,CD,X)$ is that there are $\alpha,\beta,\lambda,\lambda,x,\mu$ in $K$ such that 
$$a(1+\cM_{\Delta})=\alpha(1+\cM_{\Delta}),$$
$$b(1+\cM_{\Delta})=\beta(1+\cM_{\Delta}),$$
$$(\alpha+\beta)(1+\cM_{\Delta})=\lambda(1+\cM_{\Delta})=\lambda'(1+\cM_{\Delta}),$$
$$(cd)(1+\cM_{\Delta})=\mu(1+\cM_{\Delta}),$$
$$(\lambda'+\mu)(1+\cM_{\Delta})=x(1+\cM_{\Delta}).$$
Since
$$v(a)=v(b)=v(c)=v(d)=v(\mu)=0,$$
(by Lemma \ref{val-0}), also
$$v(\alpha)=v(\beta)=v(cd)=v(\mu)=0,$$
and
$$v(\lambda)=v(\alpha+\beta)\geq 0,$$
and
$$v(\lambda')\geq 0,$$
so
$$v(x)=v(\lambda'+\mu)\geq 0.$$
So $X\in P_{\Delta}$. 
\end{proof}
\begin{claim}\label{clause-2} 
$H_{\Delta}\models \Theta_2(X) \Rightarrow X\in P_{\Delta}.$
\end{claim}
\begin{proof} Assume that  
$$H_{\Delta}\models (\exists Y) (\exists W) (T^{+,Kras}(Y)\wedge T^{+,Kras}(W)\wedge \Sigma_3(X^l,-1,Y,W)).$$
Choose $Y=y(1+\cM_{\Delta})$ and $W=w(1+\cM_{\Delta})$, (where $y,w\in K$), to witness the quantifiers. By Lemma \ref{lift}, 
$$K\models T^+(y)\wedge T^+(w).$$
The meaning of $\Sigma_3(X^l,-1,Y,W)$ is that there are 
$x',\theta,y',\rho,\rho'$ in $K$ such that
$$x'(1+\cM_{\Delta})=x^l(1+\cM_{\Delta}),$$
$$\theta(1+\cM_{\Delta})=(-1)(1+\cM_{\Delta}),$$
$$(x'+\theta)(1+\cM_{\Delta})=\rho(1+\cM_{\Delta})=\rho'(1+\cM_{\Delta}),$$
$$y'(1+\cM_{\Delta})=y(1+\cM_{\Delta}),$$
$$(\rho'+y')(1+\cM_{\Delta})=w(1+\cM_{\Delta}).$$
Thus (by Lemma \ref{val-0}) 
$$v(y)=v(y')=v(w)=0.$$
Obviously $v(\theta)=0$. So
$$v(\rho'+\theta)=0,$$
Hence $v(\rho')\geq 0$. Thus $v(\rho')\geq 0$. So
$$v(x'+\theta)\geq 0.$$
So $v(x')\geq 0$. Since
$$v(x')=lv(x),$$
we deduce that $v(x)\geq 0$.
Therefore $X\in P_{\Delta}$, completing the proof of the claim.
\end{proof}
To prove the theorem, suppose that
$$H_{\Delta}\models \Theta^{Kras}(X).$$
Then either $\Theta_1(X)$ or $\Theta_2(X)$ holds, in which case we deduce from Claims \ref{clause-1} and \ref{clause-2} that 
$X\in P_{\Delta}$; or there exists $S$ such that both $\Theta_2(S)$ and 
$\Sigma_3(X,-1,S)$ hold. Choose $s\in K$ with 
$$S=s(1+\cM_{\Delta}).$$
By Claim \ref{clause-2}, $v(s)\geq 0$.

Since $\Sigma_3(X,-1,S)$, there is $e,f\in K$ such that
$$e(1+\cM_{\Delta})=x(1+\cM_{\Delta}),$$
$$f(1+\cM_{\Delta})=(-1)(1+\cM_{\Delta}),$$
and 
$$(e+f)(1+\cM_{\Delta})=s(1+\cM_{\Delta}).$$
Thus $v(f)=0$ and thus $v(e)\geq 0$. But $v(e)=v(x)$, and we deduce $X\in P_{\Delta}$. This proves the Theorem.
 
\end{proof}

\section{Uniform interpretation of the higher residue rings}\label{def-res-ring}

In this section we show that the higher residue ring $R_{\Delta}$ is interpretable in the hyperring 
$H_{\Delta}$ uniformly for all valued fields and all $\Delta$ if we have a predicate for the valuation ring 
of $H_{\Delta}$. We deduce that for all valued fields with finite or pseudofinite residue field, 
$R_{\Delta}$ is uniformly interpretable in $H_{\Delta}$, uniformly in $\Delta$. 

We start with the following.
\begin{lem} There is a well-defined surjective set map $\Psi_{\Delta}: P_{\Delta}\rightarrow R_{\Delta}$ with
$$\Psi_{\Delta}(a(1+\cM_{\Delta}))=a+\cM_{\Delta},$$
for every $a\in \cO_K$.
\end{lem}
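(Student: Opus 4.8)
The plan is to construct the map $\Psi_{\Delta}$ directly on representatives and then check it is well-defined and surjective. First I would recall the two relevant quotient constructions: $P_{\Delta}$ consists of the orbits $a(1+\cM_{\Delta})$ with $a\in\cO_K$ (since $v(a)\geq 0$ is exactly the condition defining $P_{\Delta}$, and every such orbit has a representative in $\cO_K$), while $R_{\Delta}=\cO_K/\cM_{\Delta}$ is the honest ring quotient. So I would define $\Psi_{\Delta}$ by sending the $1+\cM_{\Delta}$-orbit of $a\in\cO_K$ to the coset $a+\cM_{\Delta}$, and the whole content is that this does not depend on the choice of representative.

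Second, for well-definedness, suppose $a,b\in\cO_K$ with $a(1+\cM_{\Delta})=b(1+\cM_{\Delta})$, i.e.\ $b=a(1+m)$ for some $m\in\cM_{\Delta}$. Then $b-a=am$; since $a\in\cO_K$ and $\cM_{\Delta}$ is an ideal of $\cO_K$, we get $am\in\cM_{\Delta}$, hence $a+\cM_{\Delta}=b+\cM_{\Delta}$. This shows $\Psi_{\Delta}$ is well-defined. One subtlety to address: the orbit $a(1+\cM_{\Delta})$ taken in $K$ versus in $\cO_K$ — these agree because if $a\in\cO_K$ then $a(1+\cM_{\Delta})\subseteq\cO_K$, so choosing the representative inside $\cO_K$ is always possible and the equivalence classes match up; this is precisely the remark made earlier in the paper that $\cO_K/1+\cM_{\Delta}$ is a sub-hyperring of $K/1+\cM_{\Delta}$. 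I would also note that $0\in P_{\Delta}$ maps to $0+\cM_{\Delta}$, consistently.

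Third, for surjectivity: every element of $R_{\Delta}=\cO_K/\cM_{\Delta}$ is of the form $a+\cM_{\Delta}$ with $a\in\cO_K$, and then $a(1+\cM_{\Delta})\in P_{\Delta}$ maps onto it. This is immediate.

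The main obstacle — really the only point requiring care rather than a one-line verification — is making sure the two uses of the notation $a(1+\cM_{\Delta})$ (orbit in $K$ vs.\ orbit in $\cO_K$) are reconciled, and that $P_{\Delta}$ is correctly identified as the set of orbits with a representative in $\cO_K$; once that bookkeeping is in place the well-definedness is just the ideal property of $\cM_{\Delta}$ in $\cO_K$. I expect the author's proof to be essentially this short computation, possibly followed by observing (in the sequel, not in this lemma) that $\Psi_{\Delta}$ is far from injective and that recovering the ring structure of $R_{\Delta}$ requires identifying the fibers via the hyperring primitives together with the predicate $P_{\Delta}$.
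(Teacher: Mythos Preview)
Your proof is correct and follows essentially the same approach as the paper: both verify well-definedness by writing one representative as the other times an element of $1+\cM_{\Delta}$ and then using that $\cM_{\Delta}$ is an ideal of $\cO_K$ to conclude the difference lies in $\cM_{\Delta}$. Your version is more detailed (explicitly treating surjectivity and the orbit bookkeeping), whereas the paper's proof is the bare two-line computation $a(1+\cM_{\Delta})=b(1+\cM_{\Delta})\Rightarrow ab^{-1}\in 1+\cM_{\Delta}\Rightarrow a\in b+\cM_{\Delta}$.
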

\begin{proof}
If
$$a(1+\cM_{\Delta})=b(1+\cM_{\Delta}),$$
then
$$ab^{-1} \in 1+\cM_{\Delta},$$
so since $v(b)\geq 0$, we have 
$$a\in b+\cM_{\Delta}.$$
\end{proof}
\begin{note} $\Psi_{\Delta}$ sends $0\in P_{\Delta}$ (which is $0(1+\cM_{\Delta})$) to $\cM_{\Delta}$. 
\end{note}

We need to understand the fibers $\Psi_{\Delta}^{-1}(a+\cM_{\Delta})$, where $a\in \cO_K$.

\begin{lem}\label{case-1} Suppose $v(a)\in {\Delta}$ and $a\in \cO_K$. 
Then the fiber $\Psi_{\Delta}^{-1}(a+\cM_{\Delta})$ is naturally 
isomorphic to
$$\{a\} \times (1+\cM_{I-v(a)})/1+\cM_{{\Delta}}.$$ 
In particular, it has cardinal $1$ if and only if $I-v(a)=I$ which is true if $v(a)=0$, i.e. $a\in U$ and 
$a(1+\cM_{\Delta})\in U_{\Delta}$.
\end{lem}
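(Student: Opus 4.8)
The plan is to compute the fiber $\Psi_{\Delta}^{-1}(a+\cM_{\Delta})$ directly from the definitions. By definition an element of $P_{\Delta}$ lying over $a+\cM_{\Delta}$ is a class $b(1+\cM_{\Delta})$ with $b\in\cO_K$ and $b\equiv a \pmod{\cM_{\Delta}}$, i.e. $b=a+m$ for some $m\in\cM_{\Delta}$. First I would observe that since $v(a)\in\Delta$ (in particular $v(a)$ is finite, so $a\neq 0$) and $v(m)>\Delta\geq v(a)$, every such $b=a+m$ has $v(b)=v(a)$, and one may write $b=a(1+a^{-1}m)$ with $a^{-1}m$ of value $v(m)-v(a)>\Delta-v(a)$, i.e. $a^{-1}m\in\cM_{\Delta-v(a)}$ (this is the $\cM_{I-v(a)}$ of the statement, with $I=\Delta$). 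Thus every element of the fiber is of the form $a\cdot u(1+\cM_{\Delta})$ with $u\in 1+\cM_{\Delta-v(a)}$, which gives the map to $\{a\}\times(1+\cM_{\Delta-v(a)})/(1+\cM_{\Delta})$.

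Next I would check this map is well-defined and bijective. Two elements $a(1+u_1 m_1'')$ and $a(1+u_2 m_2'')$ — better, two witnesses $u_1,u_2\in 1+\cM_{\Delta-v(a)}$ — give the same class $au_1(1+\cM_{\Delta})=au_2(1+\cM_{\Delta})$ if and only if $u_1 u_2^{-1}\in 1+\cM_{\Delta}$, which is exactly the statement that $u_1,u_2$ have the same image in $(1+\cM_{\Delta-v(a)})/(1+\cM_{\Delta})$; this simultaneously gives injectivity and well-definedness. Surjectivity onto $\{a\}\times(1+\cM_{\Delta-v(a)})/(1+\cM_{\Delta})$ is immediate since for any $u\in 1+\cM_{\Delta-v(a)}$ the element $au$ lies in $\cO_K$ (as $v(au)=v(a)\geq 0$, using $v(a)\in\Delta$ and $0\in\Delta$ forces $v(a)\geq 0$ only if... here I should be slightly careful: $\Delta$ is convex containing $0$, and $a\in\cO_K$ is assumed, so $v(a)\geq 0$ is part of the hypothesis) and $au\equiv a\pmod{\cM_{\Delta}}$, so $au(1+\cM_{\Delta})$ is in the fiber and maps to $(a,u(1+\cM_{\Delta}))$. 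The word ``naturally'' here just refers to this canonical identification via multiplication by $a$.

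For the last sentence: the fiber has cardinality $1$ precisely when $(1+\cM_{\Delta-v(a)})/(1+\cM_{\Delta})$ is trivial, i.e. when $1+\cM_{\Delta-v(a)}=1+\cM_{\Delta}$, equivalently $\cM_{\Delta-v(a)}=\cM_{\Delta}$. Since $\cM_{\Delta'}=\{x:v(x)>\Delta'\}$ depends only on $\Delta'$ and $\Delta-v(a)\supseteq\Delta$ with equality iff $v(a)=0$ (as $\Delta$ is a nonempty convex set and translation by a nonzero amount strictly enlarges or shrinks it in the relevant direction — here $v(a)\geq 0$, and $v(a)>0$ would make $\Delta-v(a)$ strictly larger, changing $\cM$), this happens exactly when $I-v(a)=I$, i.e. $v(a)=0$; and $v(a)=0$ with $a\in\cO_K$ means $a\in U$, whence $a(1+\cM_{\Delta})\in U_{\Delta}$ by definition of $U_{\Delta}$. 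I expect the only mildly delicate point is the bookkeeping that $\cM_{\Delta-v(a)}=\cM_{\Delta}$ is genuinely equivalent to $v(a)=0$ rather than merely implied by it — this uses that $\Delta$ is convex and that the value group has no largest element below a given cut, so a strict enlargement of the cut $\Delta$ actually adds new elements to $\cM$; this is where I would be most careful, though it is still routine.
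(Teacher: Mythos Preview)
Your proposal is correct and follows essentially the same route as the paper: write an element of the fiber as $b=a\theta$ with $v(\theta)=0$, observe that $a-b\in\cM_{\Delta}$ forces $\theta\in 1+\cM_{\Delta-v(a)}$, and note that two such $\theta$'s give the same class in $P_{\Delta}$ exactly when they agree modulo $1+\cM_{\Delta}$. Your write-up is in fact more careful than the paper's (which omits the surjectivity and well-definedness checks you spell out), and your caution about the final ``iff'' is warranted---the paper's statement only asserts that $I-v(a)=I$ \emph{is true if} $v(a)=0$, not the converse, so you need not worry about justifying more than that.
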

\begin{proof}
Suppose
$$b(1+\cM_{\Delta})\in \Psi_{\Delta}^{-1}(a+\cM_{\Delta}).$$
Then
$$b-a\in \cM_{\Delta},$$
so $v(a)=v(b)$, so
$$b=\theta a,\ v(\theta)=0.$$
Also, $a(1-\theta) \in \cM_{\Delta}$, so 
$$v(1-\theta)+v(a)>{\Delta},$$
so 
$$v(1-\theta)>{\Delta}-v(a).$$ 
Note that $0\in {\Delta}-v(a)$. We have
$$\theta\in 1+\cM_{{\Delta}-v(a)}.$$ 
Now
$$b(1+\cM_{\Delta})=a(1+\cM_{\Delta})$$
if and only if
$$\theta\in 1+\cM_{\Delta}.$$ 
The proof is complete.
\end{proof}

\begin{lem}\label{case-2} Suppose $v(a) \notin {\Delta}$, i.e. $v(a)>{\Delta}$, and $a\in \cO_K$. Then
$$\Psi_{\Delta}^{-1}(a)=\Psi_{\Delta}^{-1}(0)=\{\gamma: \gamma>{\Delta}\} \times U/1+\cM_{\Delta},$$
which is infinite if ${\Delta}\neq \Gamma$.
\end{lem}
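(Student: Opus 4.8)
The plan is to reduce the statement to an explicit description of the single fibre $\Psi_{\Delta}^{-1}(0)$ and then stratify that fibre by value. Since the hypothesis $v(a)>\Delta$ is exactly the condition $a\in\cM_{\Delta}$, the class of $a$ in $R_{\Delta}=\cO_K/\cM_{\Delta}$ is the zero class; hence $\Psi_{\Delta}^{-1}(a)$, read as $\Psi_{\Delta}^{-1}(a+\cM_{\Delta})$, coincides with $\Psi_{\Delta}^{-1}(\cM_{\Delta})=\Psi_{\Delta}^{-1}(0)$, and it suffices to identify the latter. Unwinding the definitions of $\Psi_{\Delta}$ and of $P_{\Delta}$, a point of $\Psi_{\Delta}^{-1}(0)$ is precisely a coset $b(1+\cM_{\Delta})$ with $b\in\cO_K$ and $b\in\cM_{\Delta}$, i.e. with $v(b)>\Delta$ (allowing $b=0$, where $v(b)=\infty>\Gamma\supseteq\Delta$; note that $v(b)>\Delta$ already forces $v(b)>0$, so such $b$ automatically lie in $\cO_K$ and the constraint defining $P_{\Delta}$ is free).

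Next I would stratify by the value $\gamma=v(b)$, which ranges over $\{\gamma\in\Gamma\cup\{\infty\}:\gamma>\Delta\}$. For a fixed finite $\gamma$ in this set, choose once and for all some $b_{\gamma}\in K$ with $v(b_{\gamma})=\gamma$; then $b(1+\cM_{\Delta})\mapsto (bb_{\gamma}^{-1})(1+\cM_{\Delta})$ is a well-defined bijection of $\{b(1+\cM_{\Delta}):v(b)=\gamma\}$ onto $U/(1+\cM_{\Delta})$, because $bb_{\gamma}^{-1}$ runs exactly through $U$ and $b(1+\cM_{\Delta})=b'(1+\cM_{\Delta})$ iff $bb_{\gamma}^{-1}$ and $b'b_{\gamma}^{-1}$ differ by an element of $1+\cM_{\Delta}$. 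Assembling these bijections over all finite $\gamma>\Delta$, and adjoining the single coset $0=0(1+\cM_{\Delta})$ coming from $\gamma=\infty$, identifies $\Psi_{\Delta}^{-1}(0)$ with $\{\gamma:\gamma>\Delta\}\times U/(1+\cM_{\Delta})$ as in the statement; the one harmless imprecision, which I would flag in passing, is that the $\gamma=\infty$ stratum is the lone point $0$ rather than a full copy of $U/(1+\cM_{\Delta})$.

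For the infinitude claim, assume $\Delta\neq\Gamma$ and pick $\gamma_0\in\Gamma\setminus\Delta$. Because $\Delta$ is downward closed and $\Gamma$ is totally ordered, no $\delta\in\Delta$ can satisfy $\gamma_0\leq\delta$, so $\gamma_0>\Delta$ and in particular $\gamma_0>0$; hence $n\gamma_0\geq\gamma_0>\Delta$ for every $n\geq 1$, producing infinitely many distinct values $>\Delta$, and the fibre, which meets each such stratum, is infinite. I do not expect any genuine obstacle here: the argument is essentially bookkeeping, the only points needing a little care being the convexity/total-order step just used and the $b=0$ (equivalently $\gamma=\infty$) edge case in the parametrisation noted above.
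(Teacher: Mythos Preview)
Your proof is correct and follows essentially the same approach as the paper: reduce to the fibre over $0$, identify it as $\{g\in H_{\Delta}:v(g)>\Delta\}$, and then stratify by value, observing that within a fixed value $\gamma$ the cosets are parametrised by $U/(1+\cM_{\Delta})$ via multiplication by a chosen element of value $\gamma$. Your version is in fact more complete than the paper's, which omits the argument for infinitude and does not flag the $\gamma=\infty$ edge case you correctly note.
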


\begin{proof} We have
$$a+\cM_{\Delta}=0\in R_{\Delta}.$$
Now suppose that
$$\Psi_{\Delta}(b(1+\cM_{\Delta}))=0.$$ Then 
$v(b)>{\Delta}$. So 
$$\Psi_{\Delta}^{-1}(0)=\{g\in K/1+\cM_K: v(g)>{\Delta}\}.$$ 

Suppose $v(a), v(b)>{\Delta}$ and
$$ab^{-1} \in 1+\cM_{\Delta},$$
then $v(a)=v(b)$, as in Lemma \ref{case-1}, 
$$b=\theta a$$
with $v(\theta)=0$. Now 
$$\theta \in 1+\cM_{\Delta},$$
and we are done. 
\end{proof}
\begin{lem} Let $A$ be the graph of addition on $R_{\Delta}$. Then
$$P_{\Delta}^3\cap \Sigma=\Psi_{\Delta}^{-1}(A).$$
\end{lem}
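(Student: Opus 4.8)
The plan is to prove the two inclusions separately, working with representatives in $\cO_K$. Every $x\in P_{\Delta}$ can be written $x=a(1+\cM_{\Delta})$ with $a\in\cO_K$, and since $1+\cM_{\Delta}$ consists of units every such representative has the same image $a+\cM_{\Delta}\in R_{\Delta}$, namely $\Psi_{\Delta}(x)$. Also the sumset $a(1+\cM_{\Delta})+b(1+\cM_{\Delta})$ is stable under multiplication by $1+\cM_{\Delta}$, so for $x,y,z\in P_{\Delta}$ with representatives $a,b,c\in\cO_K$ the relation $\Sigma(x,y,z)$, i.e.\ $z\in x+y$, is equivalent to: the representative $c$ lies in $a(1+\cM_{\Delta})+b(1+\cM_{\Delta})$.

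For $P_{\Delta}^{3}\cap\Sigma\subseteq\Psi_{\Delta}^{-1}(A)$: if $\Sigma(x,y,z)$ then $c=a(1+m_1)+b(1+m_2)$ for some $m_1,m_2\in\cM_{\Delta}$, so $c-(a+b)=am_1+bm_2\in\cM_{\Delta}$ since $\cM_{\Delta}$ is an ideal of $\cO_K$; hence $\Psi_{\Delta}(z)=(a+b)+\cM_{\Delta}=\Psi_{\Delta}(x)+\Psi_{\Delta}(y)$, i.e.\ $(x,y,z)\in\Psi_{\Delta}^{-1}(A)$.

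For the reverse inclusion one is given $x,y,z\in P_{\Delta}$ with $\Psi_{\Delta}(x)+\Psi_{\Delta}(y)=\Psi_{\Delta}(z)$; in terms of representatives this says $t:=c-a-b\in\cM_{\Delta}$, and it suffices to produce $m_1,m_2\in\cM_{\Delta}$ with $am_1+bm_2=t$, since then $c=a(1+m_1)+b(1+m_2)$ witnesses $\Sigma(x,y,z)$. Producing these witnesses is the main obstacle. It is immediate when $v(x)=0$ or $v(y)=0$: if $v(a)=0$, take $m_1:=a^{-1}t$ (which lies in $\cM_{\Delta}$, having the same value as $t$) and $m_2:=0$; the case $v(b)=0$ is symmetric. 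The point that needs real care is when both $a$ and $b$ are non-units; here one must argue that the hyperaddition on $H_{\Delta}$ still pins down the class of $z$ in $R_{\Delta}$, using the freedom to modify $a,b$ within their $(1+\cM_{\Delta})$-orbits and the fact that $\cM_{\Delta}$ is an ideal. (Should this resist, what is actually required for the interpretation in Section~\ref{def-res-ring} is the weaker assertion that $A$ is the image of $P_{\Delta}^{3}\cap\Sigma$ under $\Psi_{\Delta}$, and for that the second direction needs only the single triple $\big(a(1+\cM_{\Delta}),\,b(1+\cM_{\Delta}),\,(a+b)(1+\cM_{\Delta})\big)$ over each $(r,s,r+s)\in A$, where $a,b\in\cO_K$ lift $r,s$.)
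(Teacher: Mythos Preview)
Your forward inclusion $P_{\Delta}^{3}\cap\Sigma\subseteq\Psi_{\Delta}^{-1}(A)$ is correct and is exactly what the paper's proof establishes: it picks arbitrary preimages $(a+\epsilon)(1+\cM_{\Delta})$ and $(b+\tau)(1+\cM_{\Delta})$ (via Lemma~\ref{case-1}), writes out the sumset, and observes that every element of it reduces to $a+b+\cM_{\Delta}$. Your direct computation $c-(a+b)=am_1+bm_2\in\cM_{\Delta}$ is the same argument in different notation.

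The reverse inclusion, however, is \emph{false}, and your instinct to flag the non-unit case was exactly right. Take $K=\Q_p$ with $p$ odd, $\Delta=\{g:g\le 0\}$, so $\cM_{\Delta}=p\Z_p$ and $R_{\Delta}=\F_p$. Put $x=y=p(1+p\Z_p)\in P_{\Delta}$. Then $\Psi_{\Delta}(x)=\Psi_{\Delta}(y)=0$, so $(x,y,0)\in\Psi_{\Delta}^{-1}(A)$. But the sumset $p(1+p\Z_p)+p(1+p\Z_p)=2p+p^{2}\Z_p$ does not contain $0$, so $\Sigma(x,y,0)$ fails. More generally the hypersum $x+y$ here is the single coset $2p(1+p\Z_p)$, so $(x,y,z)$ with $z=p(1+p\Z_p)$ is another counterexample. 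The obstruction is precisely that when $v(a),v(b)>0$ the ideal $a\cM_{\Delta}+b\cM_{\Delta}$ is strictly smaller than $\cM_{\Delta}$, so you cannot solve $am_1+bm_2=t$ for arbitrary $t\in\cM_{\Delta}$.

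The paper's proof does not address the reverse inclusion at all; it proves only the containment $P_{\Delta}^{3}\cap\Sigma\subseteq\Psi_{\Delta}^{-1}(A)$ together with surjectivity of $\Psi_{\Delta}^{3}$ restricted to $P_{\Delta}^{3}\cap\Sigma$ onto $A$. Your parenthetical remark is therefore spot on: what is genuinely used in the interpretation of $R_{\Delta}$ is that $A$ is the \emph{image} of $P_{\Delta}^{3}\cap\Sigma$ under $\Psi_{\Delta}$ (equivalently, that $\Sigma$ on $P_{\Delta}$ is $E$-invariant and induces the graph of addition on the quotient), and for that the triple $\big(a(1+\cM_{\Delta}),\,b(1+\cM_{\Delta}),\,(a+b)(1+\cM_{\Delta})\big)$ already witnesses surjectivity. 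The lemma as literally stated should be weakened accordingly.
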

\begin{proof}Consider elements $a+\cM_{\Delta},\ b+\cM_{\Delta}\in R_{\Delta}$. Then by Lemma \ref{case-1} we have
$$\Psi_{\Delta}^{-1}(a+\cM_{\Delta})=(a+\epsilon)(1+\cM_{\Delta}),$$
$$\Psi_{\Delta}^{-1}(b+\cM_{\Delta})=(b+\tau)(1+\cM_{\Delta}),$$
where
$$v(\epsilon)>\Delta-v(a),$$
and 
$$v(\tau)>\Delta-v(b).$$
Denoting by $+$ the hyperaddition in the Krasner construction, we have that 
$$(a+\epsilon)(1+\cM_{\Delta})+(b+\tau)(1+\cM_{\Delta})=$$
$$\{((a+\epsilon)(1+m_{\Delta})+(b+\tau)(1+m'_{\Delta})): 
m_{\Delta}\in \cM_{\Delta}, m'_{\Delta}\in \cM_{\Delta}\}.$$
Now it is immediate that
$$\Psi_{\Delta}(((a+\epsilon)(1+m_{\Delta})+(b+\tau)(1+m'_{\Delta}))(1+\cM_{\Delta}))=a+b+\cM_{\Delta},$$
which completes the proof of the lemma.
\end{proof}
\begin{thm}\label{def-res-ring} $R_{\Delta}$ is interpretable in $(H_{\Delta},.,0,1,P_{\Delta})$ 
(in any language where we have a 
predicate for $P_{\Delta}$) uniformly for all valued fields and all $\Delta$.
\end{thm}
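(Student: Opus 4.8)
The plan is to build the interpretation of $R_{\Delta}$ inside $(H_{\Delta},.,0,1,P_{\Delta})$ directly out of the lemmas just proved, using $P_{\Delta}$ as the universe of representatives and the congruence induced by the fibers of $\Psi_{\Delta}$ as the defining equivalence relation. Concretely, the interpreting set will be $P_{\Delta}$ itself (which is a definable subset of $H_{\Delta}$ once we have a predicate for it), and the equivalence relation $\sim$ will be ``$\Psi_{\Delta}(x)=\Psi_{\Delta}(y)$''. The content of the proof is to check that $\sim$ is definable in the structure and that $+$ and $\cdot$ on $R_{\Delta}$ descend from definable relations on $P_{\Delta}$.

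First I would give a definable description of $\sim$. By Lemma~\ref{case-1} and Lemma~\ref{case-2}, two elements $x=a(1+\cM_{\Delta})$, $y=b(1+\cM_{\Delta})$ of $P_{\Delta}$ satisfy $\Psi_{\Delta}(x)=\Psi_{\Delta}(y)$ precisely when $x$ and $y$ differ (multiplicatively) by a unit that is congruent to $1$ modulo a suitable power of $\cM$; and the degenerate case $v(a)>\Delta$ collapses all such $x$ to the single class $0\in R_{\Delta}$. The cleanest route is to express $x\sim y$ via hyperaddition: $\Psi_{\Delta}(x)=\Psi_{\Delta}(y)$ iff $x$ and $y$ lie in $P_\Delta$ and $0\in x-y'$ for the appropriate representative — more carefully, using $\Sigma$ one writes ``$\exists z\in P_\Delta\ (\Sigma(y,z,x)\wedge \Psi_\Delta(z)=0)$'' where $\Psi_\Delta(z)=0$ is itself definable as ``$z\in P_\Delta$ and $z$ is not a unit of $H_\Delta$'', i.e. $z=0$ or $v(z)>0$; and the condition $v(z)>\Delta$ vs. $v(z)=0$ is exactly what distinguishes the two fibre regimes. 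I would then verify, against the explicit fibre computations in Lemmas~\ref{case-1} and~\ref{case-2}, that this first-order condition in the language $\{.,0,1,\Sigma,P_\Delta\}$ holds iff $a+\cM_\Delta=b+\cM_\Delta$.

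Next I would define the ring operations on $P_{\Delta}/\!\sim$. Multiplication is immediate: the hyperring multiplication on $H_{\Delta}$ restricts to an honest (single-valued) operation on $P_{\Delta}$ because $v(xy)=v(x)+v(y)\geq 0$, and $\Psi_{\Delta}$ is multiplicative by construction, so $\overline{x\cdot y}$ is well-defined on classes and equals the product in $R_{\Delta}$. Addition is handled by the last displayed lemma, which asserts $P_{\Delta}^3\cap\Sigma=\Psi_{\Delta}^{-1}(A)$ where $A$ is the graph of addition on $R_{\Delta}$: thus the relation ``$\exists t\in P_\Delta\ (\Sigma(x,y,t)\wedge t\sim s)$'' is a definable, total, single-valued-on-classes graph of addition on $P_\Delta/\!\sim$. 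The constants $0,1\in R_\Delta$ are the $\sim$-classes of $0,1\in P_\Delta$. Assembling these, $(P_\Delta/\!\sim,\,\overline{+},\,\overline{\cdot},\,\bar 0,\bar 1)$ is isomorphic to $R_\Delta$ via $\Psi_\Delta$, and every ingredient (the domain $P_\Delta$, the relation $\sim$, the graphs of $\overline{+}$ and $\overline\cdot$, the constants) is definable in $(H_{\Delta},.,0,1,P_{\Delta})$ by formulas not mentioning $\Delta$ — establishing the claimed uniformity in $K$ and in $\Delta$.

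The main obstacle I expect is the definability of $\sim$ in the ``bad'' fibre case $v(a)\in\Delta$ but $v(a)\neq 0$: there the fibre $\Psi_\Delta^{-1}(a+\cM_\Delta)$ is a nontrivial coset space $(1+\cM_{\Delta-v(a)})/(1+\cM_{\Delta})$ whose size depends on $v(a)$, and one must see that the condition ``$xy^{-1}\in 1+\cM_{\Delta-v(a)}$'' — an inequality on valuations involving the unknown $v(a)$ — can nonetheless be captured uniformly by a single hyperring formula, the point being that it is equivalent to the coset-theoretic statement ``$x$ and $y$ have a common hyper-difference lying over $0\in R_\Delta$'', which is what Lemma~\ref{case-1} and the addition lemma are engineered to provide. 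Once that equivalence is nailed down, the rest is the routine bookkeeping sketched above, and I would simply cite the four preceding lemmas for it rather than re-deriving the fibre structure.
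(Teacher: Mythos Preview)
Your overall architecture matches the paper's: interpret $R_{\Delta}$ on $P_{\Delta}$ modulo the fibre equivalence of $\Psi_{\Delta}$, read addition off $\Sigma\cap P_{\Delta}^3$ via the addition lemma, and lift multiplication directly. The paper does exactly this, and like you it reduces the definability of $\sim$ to the definability of the single fibre $\Psi_{\Delta}^{-1}(0)$, since (as the paper records in its second Claim) $E(g,h)\Leftrightarrow \Sigma(g,\Psi_{\Delta}^{-1}(0),h)$.

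The genuine gap is your characterisation of $\Psi_{\Delta}^{-1}(0)$. You write that ``$\Psi_{\Delta}(z)=0$ is itself definable as `$z\in P_{\Delta}$ and $z$ is not a unit of $H_{\Delta}$', i.e.\ $z=0$ or $v(z)>0$''. That is wrong for general $\Delta$: by Lemma~\ref{case-2}, $\Psi_{\Delta}^{-1}(0)$ consists of those $z$ with $v(z)>\Delta$, which is strictly smaller than $\{z:v(z)>0\}$ whenever $\Delta$ contains any positive elements. (Also, $H_{\Delta}$ is a hyperfield, so its only non-unit is $0$; even reading ``unit'' relative to $P_{\Delta}$ gives $v(z)>0$, not $v(z)>\Delta$.) Your parenthetical ``the condition $v(z)>\Delta$ vs.\ $v(z)=0$'' shows you sensed the discrepancy but you never say how to express $v(z)>\Delta$ uniformly in $\Delta$ in the language $\{.,0,1,\Sigma,P_{\Delta}\}$; that is precisely the nontrivial point, since $\Delta$ is not itself a parameter of the structure.

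The paper's fix is a one-line hyperring trick you are missing: $\Psi_{\Delta}^{-1}(0)$ is exactly the set of $g$ with $H_{\Delta}\models\Sigma(1,g,1)$. The forward direction is immediate (if $\hat g\in\cM_{\Delta}$ then $1+\hat g\in 1+\cM_{\Delta}$, so $1\in 1(1+\cM_{\Delta})+\hat g(1+\cM_{\Delta})$), and conversely $\Sigma(1,g,1)$ forces a representative of $g$ into $\cM_{\Delta}$. With this in hand your $\sim$ becomes definable, and the rest of your outline (multiplication immediate, addition via the lemma $P_{\Delta}^3\cap\Sigma=\Psi_{\Delta}^{-1}(A)$, well-definedness checked against the fibre Lemmas~\ref{case-1} and~\ref{case-2}) goes through exactly as in the paper.
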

\begin{proof} Define an equivalence relation $E$ on $H_{\Delta}$ by $E(g,h)$ if and only if 
$$\Psi_{\Delta}(g)=\Psi_{\Delta}(h),$$ 
where $g,h\in P_{\Delta}$,
with one extra class for $H_{\Delta} \setminus P_{\Delta}$. 

We first show that $E$ is definable in the group $H_{\Delta}$.
\begin{claim} $\Psi_{\Delta}^{-1}(0)$ is definable.
\end{claim}
\begin{proof} If $g\in\Psi_{\Delta}^{-1}(0)$, then
$$g=\hat{g}(1+\cM_{\Delta}),$$
where $\hat{g}\in \cM_{\Delta}$. We claim that
$$H_{\Delta}\models \Sigma(1,g,1).$$
Indeed, this holds if and only if there are 
$\alpha,\beta \in K$ with
$$\alpha(1+\cM_{\Delta})=1(1+\cM_{\Delta}),$$
$$\beta(1+\cM_{\Delta})=\hat{g}(1+\cM_{\Delta}),$$
and
$$(\alpha+\beta)(1+\cM_{\Delta})=1(1+\cM_{\Delta}).$$
To satisfy this we take $\alpha=1$, $\beta=\hat{g}$, and we are done since 
$$(\alpha+\beta)(1+\cM_{\Delta})=(1+\hat{g})(1+\cM_{\Delta})=1(1+\cM_{\Delta}).$$
Conversely, suppose that
$$H_{\Delta}\models \Sigma(1,g,1).$$
Then choosing $g=\hat{g}(1+\cM_K)$ where $\hat{g}\in K$, we have 
$$1(1+\cM_K)\subseteq \hat{g}(1+\cM_K)+1(1+\cM_K).$$
Thus for any $\rho\in \cM_K$ there are $\lambda,\tau \in \cM_K$ such that
$$1+\rho=1+\tau+\hat{g}+\hat{g}\lambda.$$
Choose such a $\rho$ and get such $\lambda$ and $\tau$.
We deduce that
$$\hat{g}(1-\lambda)=\rho-\tau\in \cM_K.$$
Hence, $\hat{g}\in \cM_K$, and 
$$\Psi_{\Delta}(g)=0.$$
\end{proof}
\begin{claim} For any $g,h\in H_{\Delta}$ we have
$$E(g,h) \Leftrightarrow H_{\Delta}\models \Sigma(g,\Psi_{\Delta}^{-1}(0),h).$$
\end{claim}
\begin{proof} Clear.
\end{proof}
Now we interpret $R_{\Delta}$ setwise as the equivalence classes for $E$, 
and we have also given an interpretation of $0$ and $1$.

Denote the $E$-class of an element $g$ by $g_E$. We define addition and multiplication on the classes by
$$g_E+h_E=j_E,$$
where $j$ is such that
$$H_{\Delta}\models \Sigma(g,h,j),$$
and 
$$g_E.h_E=j_E,$$
where
$$j=gh.$$
It is easy to see that addition is well-defined. 

To show that multiplication is well-defined consider
$$a+\cM_{\Delta}\in R_{\Delta},$$
and 
$$b+\cM_{\Delta} \in R_{\Delta}.$$ 
We may assume that $v(a)\in {\Delta}$ and 
$v(b)\in {\Delta}$ otherwise we get the zero element after 
multiplying. Consider arbitrary elements 
$$a\theta(1+\cM_{\Delta})\in \Psi^{-1}(a+\cM_{\Delta}),$$
and
$$b\psi(1+\cM_{\Delta})\in \Psi^{-1}(b+\cM_{\Delta}),$$ 
where $\theta$ and $\psi$ have value zero (cf. Lemma \ref{case-1}). Since
$$\Psi_{\Delta}(a\theta(1+\cM_{\Delta})b\psi(1+\cM_{\Delta}))=ab+\cM_{\Delta},$$
we need to show that
$$ab\theta\psi+\cM_{\Delta}=ab+\cM_{\Delta}.$$
By proof of Lemma \ref{case-1},
$$\theta=1+\epsilon,$$
where $v(\epsilon)>{\Delta}-v(a)$, and 
$$\psi=1+\delta,$$
where $v(\delta)>{\Delta}-v(b)$. So 
$$\theta\psi=1+\epsilon+\delta+\epsilon\delta$$
where $v(\epsilon\delta)>{\Delta}-(v(a)+v(b))$. Thus
$$ab(1-\theta\psi)\in \cM_{\Delta},$$
hence
$$1-\theta\psi\in \cM_{{\Delta}-(v(a)+v(b))},$$
as required.
\end{proof}

We have thus defined a ring structure on $H_{\Delta}/E$ isomorphic, under the map $\Psi_{\Delta}$, to $R_{\Delta}$.

\begin{thm}\label{def-res-ring2} The higher residue ring $R_{\Delta}$ is interpretable in the hyperring $H_{\Delta}$ uniformly for all 
Henselian valued fields with finite or pseudofinite residue field and uniformly in $\Delta$
\end{thm}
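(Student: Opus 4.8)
The plan is to combine the uniform definability of $P_{\Delta}$ from Theorem~\ref{unif-def-val} with the interpretation of $R_{\Delta}$ from Theorem~\ref{def-res-ring}. First I would recall that Theorem~\ref{def-res-ring} gives, for \emph{every} valued field $K$ and every $\Delta$, an interpretation of $R_{\Delta}$ inside the two-sorted (really one-sorted with a distinguished subset) structure $(H_{\Delta},.,0,1,P_{\Delta})$, where the interpreting data (the equivalence relation $E$, the addition via $\Sigma$, the multiplication via the hyperproduct, and the constants) are given by fixed formulas not depending on $K$ or $\Delta$. So the only obstruction to getting an interpretation inside the pure hyperring $H_{\Delta}=(K/1+\cM_{\Delta},.,1,0,\Sigma)$ is the appearance of the predicate $P_{\Delta}$ as primitive.

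Second, I would invoke Theorem~\ref{unif-def-val}: for all Henselian valued fields $K$ with finite or pseudofinite residue field, the set $P_{\Delta}$ is defined inside $H_{\Delta}$ by the formula $\Theta^{Kras}(X)$, and this formula is uniform in both $K$ and $\Delta$. Since $\Theta^{Kras}$ is built from $\{.,1,0,\Sigma\}$ together with $\,^{-1}$ on the nonzero part (which is quantifier-free definable in $H_{\Delta}$ because we have a constant for $0$ and the nonzero elements form a group), it is a bona fide $\cL_{\rm hyp}$-formula. Hence in the class of Henselian valued fields with finite or pseudofinite residue field, $(H_{\Delta},.,0,1,P_{\Delta})$ is \emph{itself} uniformly interpretable (indeed definable with the same universe) in $H_{\Delta}$.

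Third, I would simply compose the two interpretations. An interpretation of $R_{\Delta}$ in $(H_{\Delta},.,0,1,P_{\Delta})$ followed by a definable expansion of $H_{\Delta}$ to $(H_{\Delta},.,0,1,P_{\Delta})$ yields an interpretation of $R_{\Delta}$ in $H_{\Delta}$; and since both ingredients are uniform in $K$ (over the stated class) and in $\Delta$, so is the composite. Concretely: replace every occurrence of the primitive ``$P_{\Delta}$'' in the formulas furnished by Theorem~\ref{def-res-ring} by the formula $\Theta^{Kras}$ from Theorem~\ref{unif-def-val}; the resulting formulas define the same equivalence relation $E$, the same graph of addition, the same graph of multiplication, and the same constants $0,1$, now purely in the language $\{.,1,0,\Sigma\}$.

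There is essentially no hard step here; the substance was already done in Theorems~\ref{def-res-ring} and~\ref{unif-def-val}, and what remains is the routine observation that interpretations compose and that uniformity is preserved under composition. The only point requiring a word of care is the domain of applicability: Theorem~\ref{def-res-ring} holds for \emph{all} valued fields, but Theorem~\ref{unif-def-val} requires $K$ Henselian with finite or pseudofinite residue field, so the composite statement is correctly restricted to that smaller class, which is exactly the class claimed (and which, as noted earlier in the paper, contains all nonarchimedean completions of number fields, the case of real interest for the adelic application).
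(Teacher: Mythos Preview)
Your proposal is correct and follows exactly the paper's own approach: the paper's proof is the single line ``Use Theorem~\ref{unif-def-val} together with Theorem~\ref{def-res-ring},'' and you have simply spelled out (correctly) what that composition of interpretations entails and why the uniformity is preserved.
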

\begin{proof} Use Theorem \ref{unif-def-val} together with Theorem \ref{def-res-ring}.
\end{proof}






 








\section{A variant of the Basarab-Kuhlmann quantifier elimination}\label{sec-bk}
In the preceding discussion, for a general $K$, there may be many $\Delta$ to consider. There is, however, one 
family $\Delta_n$ 
of particular significance, defined for all $K$.

To define $\Delta_n$, let $p$ be the {\it characteristic exponent} of $K$, so $p$ is the characteristic 
of the residue field $k$ if $k$ has prime characteristic, and $1$ if $k$ has characteristic $0$. 

Take $\Delta_{p,n}$ as $\{g: g\leq nv(p)\}$, where $n\geq 0$ and $p$ is the characteristic exponent of $K$. Clearly 
$\Delta_{p,n}$ is convex. Note that $\cM_{\Delta_{p,0}}$ is the maximal ideal of $\cO_K$.
If $K$ has residue characteristic zero, we put $\Delta_n=\Delta_{1,n}$.
\begin{lem} If $K$ has characteristic exponent $1$, $\Delta_n=\{g: g\leq 0\}=\Delta_0$, for all $n$.
\end{lem}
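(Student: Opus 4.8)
The statement to prove is elementary: if $K$ has characteristic exponent $p=1$, then $\Delta_n = \Delta_{1,n} = \{g : g \le n\cdot v(1)\} = \{g : g \le 0\} = \Delta_0$ for all $n \ge 0$.

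The plan is to unwind the definitions. First I would recall that by definition $\Delta_{p,n} = \{g : g \le n\,v(p)\}$, and that when the residue characteristic is zero we have set $\Delta_n = \Delta_{1,n}$ and the characteristic exponent is $p=1$. So the only arithmetic fact needed is that $v(1) = 0$ in the value group $\Gamma$, since $v$ is a valuation and hence a group homomorphism from $K^*$ to $\Gamma$ carrying the multiplicative identity to the additive identity.

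Next I would substitute: $n \cdot v(1) = n \cdot 0 = 0$ in $\Gamma$, for every $n \ge 0$. Therefore $\Delta_n = \{g \in \Gamma : g \le 0\}$, which is visibly independent of $n$, and in particular equals $\Delta_{1,0} = \Delta_0$. This also re-confirms convexity (already noted in general for $\Delta_{p,n}$), and that $\cM_{\Delta_0}$ is the maximal ideal, consistent with the remark just preceding the lemma.

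There is essentially no obstacle here; the "hard part" is merely making sure the degenerate conventions ($p=1$, $v(1)=0$, and the reading of $\Delta_{p,n}$ when $p=1$) are spelled out. Concretely I would write: since $p = 1$, $v(p) = v(1) = 0$, so for all $n$, $\Delta_n = \Delta_{1,n} = \{g : g \le n\cdot 0\} = \{g : g \le 0\} = \Delta_{1,0} = \Delta_0$. $\square$
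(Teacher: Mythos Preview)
Your proposal is correct and is precisely the unwinding the paper has in mind; the paper's own proof is simply the word ``Trivial.'' You have just made explicit the one-line computation $v(1)=0 \Rightarrow n\,v(1)=0 \Rightarrow \Delta_{1,n}=\{g:g\le 0\}=\Delta_0$.
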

\begin{proof} Trivial.
 
\end{proof}

Now we define the {\it principal Krasner-Basarab sort} as the 2-sorted structure 
$$(K,H_{\Delta_0},\pi_0),$$
where the sort $K$ has the language of rings, the sort $H_{\Delta_0}$ has the 
structure of hyperrings, (cf.\ Section \ref{sec-kras}), and there is a symbol for the natural connecting map
$$\pi_0: K \rightarrow H_{\Delta_0}.$$
We denote this structure by $Kras^B_{\Delta_0}(K)$. 

If $K$ has residue characteristic $0$, this is the only sorting we need. However, if $K$ has residue characteristic $p>0$ 
we need to consider the other convex sets $\Delta_{p,n}$, for $n\geq 0, p\geq 1$. 
In this case, we define the {\it Krasner-Basarab $p$-sorting} as the structure
$$(K,H_{\Delta_{p,n}},\pi_{p,n})$$
with the language of rings for the field sort $K$, the language of hyperrings for the sorts 
$H_{\Delta_{p,n}}$, and symbols for the canonical maps
$$\pi_{n,p}: K\rightarrow H_{{\Delta}_{p,n}}.$$
We denote this structure by $Kras^B_{\Delta_{p,n}}(K)$.

Note that $\Delta_{p,m}\subseteq \Delta_{p,m+1}$, and we have natural (commuting) maps
$$K\longrightarrow H_{\Delta_{p,m+1}}\longrightarrow H_{\Delta_{p,m}}.$$
Note that $Kras^B_{{\Delta}_{1,n}}=Kras^B_{\Delta_0}$, (the principal sort).

We define the {\it Krasner-Basarab language} to be the many-sorted language consisting of 
the language of rings for the field sort, the language of hyperrings for the 
sorts $H_{{\Delta}_{p,n}}$, and function symbols for the connecting maps $\pi_{p,n}$ between the 
two sorts, for all $n\geq 0,p\geq 1$.

The theorem to be stated below, combining results of Basarab \cite{basarab} and Kuhlmann \cite{kuhlmann} 
is one of the most comprehensive and important in the 
model theory of Henselian fields. We do not present the most general version. 

Let $K$ be a valued field $K$. Given $n\geq 0$, we denote
$$G_K^n=K^*/1+\cM_{K,n},$$
and 
$$\cO_K^n=\cO_K/\cM_{K,n},$$
where 
$$\cM_{K,n}=\{a\in \cO_K: v(a)>nv(p)\}.$$
We denote the corresponding canonical projection maps by 
$$\pi_n:\cO_K\rightarrow \cO_K^n,$$ 
and 
$$\pi^*_n:K^*\rightarrow G_K^n.$$
Let $\Theta_n \subseteq \cO_K^n \times G_K^n$ be the relation defined by
$$\Theta_n(x,y) \Leftrightarrow \exists z\in \cO_K (\pi_n(z)=x \wedge \pi_n^*(z)=y).$$

Given a valued $K$, the {\it Basarab-Kuhlmann language} for $K$ is the many-sorted language with sorts:
$$\mathcal{K}_n:=(K, \cO_K^n, G_K^n, \pi_n, \pi^*_n, \Theta_n),$$
for all $n\geq 0$, 
with the language of rings for the field sort $K$ and the higher residue 
ring sorts $\cO_K^n$, and the language of groups for the sorts $G_K^n$. 
This language was defined by Kuhlmann \cite{kuhlmann} based on the language of Basarab \cite{basarab}. 

Given a structure $S$ with many sorts 
$$(\sigma_0(S),\sigma_1(S),\sigma_2(S),\dots),$$ where $\sigma_0(S)=S$ 
the home sort, 
and a formula $\Psi$ of the many-sorted language for $S$ (with free variables from the different sorts), we write
$$(S,\sigma_1,\sigma_2,\dots)\models \Psi$$
to indicate that the subformulas in $\Psi$ from the sort $\sigma_j$ hold in $\sigma_j(S)$, for all $j\geq 0$.

\begin{thm}\label{thm-b-k} Given a formula 
$\Phi(\bar x)$ of the Basarab-Kuhlmann language, there is an integer $\beta(\Phi(\bar x))$, a 
formula $\Psi(\bar x,\bar y)$ (in extra variables from the sorts other than the field), 
and integers $\gamma(p)$ for every prime $p\leq \beta(\Phi(\bar x))$, such that for every $\bar a$ from $K$,
$$K\models \Phi(\bar a) \Leftrightarrow \mathcal{K}_0 \models \Psi(\bar a,\pi_0(\bar a),\pi^*(\bar a)),$$
if the residue characteristic of $K$ is greater than $\beta(\Phi(\bar x))$, and 
$$K\models \Phi(\bar a) \Leftrightarrow \mathcal{K}_{\gamma(p)} \models \Psi(\bar a,\pi_{\gamma(p)}(\bar a),
\pi^*_{\gamma(p)}(\bar a)),$$
if the residue characteristic of $K$ is $p$, for every prime $p\leq \beta(\Phi(\bar x))$.
\end{thm}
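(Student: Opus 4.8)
The statement is essentially a repackaging of the Basarab--Kuhlmann quantifier elimination theorem, so the plan is to \emph{cite} the quantifier elimination of Basarab \cite{basarab} and Kuhlmann \cite{kuhlmann} and then track the single additional phenomenon present in our formulation: the dependence of the relevant ideal $\cM_{K,n}$ on the residue characteristic $p$ through the value $nv(p)$. The point is that the Basarab--Kuhlmann elimination, applied to a fixed formula $\Phi(\bar x)$ of the field language (or of the larger Basarab--Kuhlmann language), produces a quantifier-free formula in the sorts $\cO_K^n$ and $G_K^n$ for some finite level $n$ \emph{that depends only on $\Phi$, not on $K$}, provided the residue characteristic is large; and for each small residue characteristic $p$ one needs only pass to a (possibly larger) level $\gamma(p)$ to absorb the wild ramification. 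So the proof is the extraction of this uniformity from the cited theorems.

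\textbf{Key steps, in order.} First I would fix $\Phi(\bar x)$ and invoke the Basarab--Kuhlmann theorem to obtain a quantifier-free $\Psi$ and a bound on the level needed, observing that the syntactic analysis in \cite{basarab} and \cite{kuhlmann} gives this level as a computable function of $\Phi$ alone once one is in the ``tame'' situation, i.e.\ once the residue characteristic exceeds a bound $\beta(\Phi(\bar x))$ determined by the degrees of the polynomials occurring in $\Phi$ (these control the ramification that can arise from Hensel-type arguments). Second, I would treat the finitely many remaining residue characteristics $p \leq \beta(\Phi(\bar x))$: here the defect and wild ramification force a larger truncation level, but for each such $p$ the Basarab--Kuhlmann analysis still terminates, yielding an integer $\gamma(p)$ and the same $\Psi$ (or, harmlessly, a $p$-dependent $\Psi_p$, which one can amalgamate into a single $\Psi$ by a finite case split on the residue characteristic expressed in the $\cO_K^{\gamma(p)}$ sort). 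Third, I would check that the translation of the extra Basarab--Kuhlmann relation $\Theta_n$ and the projection maps $\pi_n,\pi_n^*$ behaves uniformly, so that the right-hand sides $\mathcal{K}_0 \models \Psi(\bar a,\pi_0(\bar a),\pi^*(\bar a))$ and $\mathcal{K}_{\gamma(p)} \models \Psi(\bar a,\ldots)$ make sense verbatim. Finally, for formulas $\Phi$ of the full Basarab--Kuhlmann language rather than just the field language, I would reduce to the field-language case by noting that each higher-sort variable can be replaced, at the cost of an existential field quantifier and the relations $\Theta_n$, by its lift to $K$ — exactly the manoeuvre already used in Section \ref{sec-kras} to pass between $H_\Delta$ and $K$.

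\textbf{Main obstacle.} The genuine difficulty is entirely in the small residue characteristic case: establishing that a single finite level $\gamma(p)$ suffices to capture $\Phi$ uniformly over \emph{all} Henselian valued fields $K$ of residue characteristic $p$ (with the relevant residue field hypotheses), despite unbounded wild ramification and the possible presence of defect. This is precisely the content of the hard direction of \cite{basarab} and \cite{kuhlmann}, and I would not reprove it; the work on my side is to read off from their proofs that the bound on the level is a function of $\Phi$ and $p$ only, with no further dependence on $K$. Everything else — the tame case, the amalgamation of the finitely many $\Psi_p$, the handling of the connecting maps and of $\Theta_n$, and the reduction from the many-sorted to the field-sorted formula — is routine bookkeeping on top of the cited quantifier elimination.
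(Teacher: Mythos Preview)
Your proposal is broadly correct in its overall architecture --- cite Basarab and Kuhlmann, separate large from small residue characteristic, and handle the finitely many small primes individually --- and this matches the paper. The one substantive difference is in how you obtain the bound $\beta(\Phi(\bar x))$ for the large residue characteristic case. You propose to extract it by a \emph{syntactic} inspection of the Basarab--Kuhlmann proofs, reading off a level determined by the degrees of polynomials in $\Phi$. The paper instead takes the cleaner model-theoretic route: it first invokes Kuhlmann's Theorem 2.4 for Henselian fields of residue characteristic \emph{zero}, which already yields the level-$0$ equivalence, and then uses a straightforward \emph{compactness argument} to transfer this to all sufficiently large residue characteristics, producing $\beta(\Phi(\bar x))$ for free. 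This avoids any need to open up the proofs in \cite{basarab} or \cite{kuhlmann} for the tame case. Your syntactic approach would presumably work and might even yield an explicit (computable) $\beta$, but it is more labour and more fragile than the compactness trick. Likewise, your final step of reducing many-sorted $\Phi$ to field-sorted $\Phi$ by lifting variables is unnecessary here: the cited theorems already operate in the full Basarab--Kuhlmann language, so no such reduction is needed.
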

\begin{proof} By Theorem 2.4 in \cite{kuhlmann} there is a formula $\Psi_1(\bar x,\bar y)$ 
involving the sorts $K$, $\cO_K^0$ and $G_K^0$ (with the 
appropriate maps) such that if $K$ has residue characteristic zero, then for every $\bar a$ from $K$ 
$$K\models \Phi(\bar a) \Leftrightarrow \mathcal{K}_0\models \Psi_1(\bar a,\pi(\bar a),\pi^*(\bar a)).$$
Using a compactness argument, this holds for $K$ of residue characteristic larger than some positive integer 
$\beta(\Phi(\bar x))$ depending only on $\Phi(\bar x)$. 
The case of $K$ with residue characteristic $p\leq \beta(\Phi(\bar x))$ follows from 
Theorem B in \cite{basarab}.
\end{proof}
We do not know if there is a uniform quantifier elimination for Henselian fields of 
residue characteristic $p$. Theorem \ref{thm-b-k} does not apply to this case because of the dependency of the 
ideals $\cM_{K,n}$ on $v(p)$. 

Theorem \ref{thm-b-k} holds in a very general setting. We can 
start with a notion of first-order formula of the language of valued fields which can be many-sorted (e.g. the 
standard 3-sorted (9) from Section \ref{sec-sorts}, 
with the field sort most prominent). The only restriction on the other sorts is that they be 
interpretable in the 3-sorted case, and that the value and residue sorts be interpretable in them. This restriction 
excludes angular components and cross-sections, but it is easy to prove a version of the theorem taking account of those. 

We can formulate the Basarab-Kuhlmann Theorem in the Krasner-Basarab language of hyperrings.

\begin{thm}\label{thm-kb} There is a computable map, defined on first-order formulas of the language of valued fields, 
assigning to each $\Phi(\bar x)$

i) an integer $\beta(\Phi(\bar x))$,

ii) a formula $\Phi_{0}(\bar x,\bar y)$ 
from the 2-sorted Krasner-Basarab language with field sort and principal sort, 
and having no bound variables of field sort, 

 iii) for each prime $p\leq \beta(\Phi(\bar x))$ 
an integer $\gamma_p(\Phi(\bar x))$ and formulas 
$$\Phi_{p,1}(\bar x,\bar y),\dots,\Phi_{p,r_p}(\bar x,\bar y)$$
from the $p$-sorting, with no bound variables of field sort, and no sorts $(p,n)$ for $n>\gamma_p(\Phi(\bar x))$, 

{\it such that} for all Henselian valued fields $K$ of characteristic $0$, 
if the residue characteristic of $K$ is not a prime 
at most $\beta(\Phi(\bar x))$ then for every $\bar a$ from $K$
$$K\models \Phi(\bar a) \Leftrightarrow Kras^B_{\Delta_0}\models \Phi_{0}(\bar a,\pi_0(\bar a)),$$
and if the residue characteristic of $K$ is a prime $p\leq \beta(\Phi(\bar x))$, then for some $r$ for all $\bar a$ 
from $K$ 
$$K\models \Phi(\bar a) \Leftrightarrow Kras^B_{\Delta{p,r}}\models \Phi_{p,r}(\bar a,\pi_r(\bar a)).$$
\end{thm}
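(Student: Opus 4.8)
The plan is to deduce Theorem \ref{thm-kb} from the already-established Theorem \ref{thm-b-k} by passing from the Basarab--Kuhlmann sorts $\mathcal{K}_n$ to the Krasner--Basarab hyperring sorts $Kras^B_{\Delta_{p,n}}$ via a uniform interpretation of the former in the latter. The essential point is that the ingredients of $\mathcal{K}_n$ — the higher residue ring $\cO_K^n = \cO_K/\cM_{K,n}$, the group $G_K^n = K^*/1+\cM_{K,n}$, the projections $\pi_n,\pi_n^*$, and the connecting relation $\Theta_n$ — are all uniformly interpretable in $H_{\Delta_{p,n}} = K/1+\cM_{\Delta_{p,n}}$ together with the field sort and the connecting map $\pi_{p,n}$. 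Indeed, $G_K^n$ is the multiplicative group of nonzero elements of the hyperfield $H_{\Delta_{p,n}}$, hence quantifier-free definable (we have a constant for $0$); by Theorem \ref{def-res-ring2}, applied with $\Delta = \Delta_{p,n}$, the ring $\cO_K^n = R_{\Delta_{p,n}}$ is interpretable in $H_{\Delta_{p,n}}$ uniformly for all Henselian valued fields with finite or pseudofinite residue field and uniformly in $n$ (here we use crucially that the nonarchimedean completions of number fields — and more generally the fields at issue — have finite or pseudofinite residue field, so that $P_{\Delta_{p,n}}$ is itself definable by Theorem \ref{unif-def-val} and need not be added as a primitive); and the maps $\pi_n$, $\pi_n^*$ as well as the relation $\Theta_n$ are then read off from the hyperring projection $\pi_{p,n}: K \to H_{\Delta_{p,n}}$ composed with these interpretations, since $\Theta_n(x,y)$ just says that $x$ and $y$ have a common preimage in $\cO_K$, which is expressible once $\pi_{p,n}$ and the interpreted $\pi_n$ are available.

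Granting this, the argument runs as follows. First I would fix a formula $\Phi(\bar x)$ of the language of valued fields and apply Theorem \ref{thm-b-k} to obtain the integer $\beta(\Phi(\bar x))$, the formula $\Psi(\bar x,\bar y)$ in the Basarab--Kuhlmann language with no bound field-sort variables, and the integers $\gamma(p)$ for primes $p \le \beta(\Phi(\bar x))$. Next I would apply the interpretation described above — syntactically, a computable translation $t$ sending Basarab--Kuhlmann formulas over the sorts $\mathcal{K}_n$ to Krasner--Basarab hyperring formulas over the sort $H_{\Delta_{p,n}}$ — to $\Psi$. Because interpretations preserve satisfaction, for residue characteristic $0$ or $> \beta(\Phi(\bar x))$ one gets $\Phi_0(\bar x,\bar y) := t(\Psi)$ interpreted over $Kras^B_{\Delta_0}$, and the equivalence
$$K\models \Phi(\bar a) \Leftrightarrow Kras^B_{\Delta_0}\models \Phi_0(\bar a,\pi_0(\bar a))$$
follows by composing the two biconditionals; for residue characteristic $p \le \beta(\Phi(\bar x))$ one takes $r = \gamma_p(\Phi(\bar x)) := \gamma(p)$ and sets $\Phi_{p,r}(\bar x,\bar y) := t(\Psi)$ over $Kras^B_{\Delta_{p,r}}$, obtaining the second equivalence similarly. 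One must check that the translation $t$ introduces no bound variables of field sort (it does not, since the interpretation of each $\mathcal{K}_n$-sort lives entirely in the hyperring sort $H_{\Delta_{p,n}}$ and the field sort, and $\Psi$ already had no bound field variables) and involves only the finitely many sorts $(p,n)$ with $n \le \gamma_p(\Phi(\bar x))$ (immediate, since $\Psi$ only mentions $\mathcal{K}_{\gamma(p)}$). Computability of $\beta$, $\gamma_p$ and the formulas is inherited from the computability in Theorem \ref{thm-b-k} together with the explicit (hence computable) nature of the interpretation $t$, which in turn rests on the explicit formulas of Theorems \ref{unif-def-val} and \ref{def-res-ring}.

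The main obstacle is verifying the uniform interpretability of the full Basarab--Kuhlmann sort $\mathcal{K}_n$ in $H_{\Delta_{p,n}}$ — in particular pinning down $\cO_K^n$ and the relation $\Theta_n$ correctly and checking that the whole package is uniform in $n$ and in $K$ across the relevant class. The $\cO_K^n$ part is exactly Theorem \ref{def-res-ring2} with $\Delta = \Delta_{p,n}$, so the residual work is: (a) confirming $\Delta_{p,n}$ is a convex subset containing $0$ (noted already, since $\Delta_{p,n} = \{g : g \le nv(p)\}$ is downward closed and contains $0$); (b) identifying $G_K^n$ inside $H_{\Delta_{p,n}}$ as the nonzero part (trivial); and (c) expressing $\Theta_n$ and the two projections using only the hyperring structure and $\pi_{p,n}$ — which is where care is needed, because one must route the definition through the interpreted copies of $\cO_K^n$ and $G_K^n$ rather than through $\cO_K$ directly, but $\pi_{p,n}$ supplies the preimage relation into $H_{\Delta_{p,n}}$ and the interpreted $\Psi_{\Delta_{p,n}}: P_{\Delta_{p,n}} \to R_{\Delta_{p,n}}$ of the Lemma preceding Theorem \ref{def-res-ring} supplies the factorization through $\cO_K^n$. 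Once these are in place the rest is bookkeeping about composing interpretations and tracking the finitely many sorts involved.
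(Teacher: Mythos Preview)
Your proposal is correct and follows essentially the same approach as the paper: the paper's proof is the single sentence ``Combine Theorem \ref{thm-b-k} with the interpretation of the higher residue rings in Theorem \ref{def-res-ring2},'' and you have simply unpacked what that combination entails (identifying $G_K^n$ as the nonzero part of $H_{\Delta_{p,n}}$, invoking Theorem \ref{def-res-ring2} for $\cO_K^n$, and tracing the projections and $\Theta_n$ through $\pi_{p,n}$). Your care about the residue-field hypothesis needed for Theorem \ref{def-res-ring2} is appropriate and is exactly what the Note following the theorem addresses.
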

\begin{proof} Combine Theorem \ref{thm-b-k} with the the interpretation of the higher residue rings in 
Theorem \ref{def-res-ring2}.
\end{proof}

\begin{note} Theorem \ref{thm-kb} remains true without the condition on the residue field provided we add a 
 predicate for the valuation ring of the hyperring to the (field sort of the) 
language of Krasner-Basarab.
\end{note}
 






\begin{note}\begin{itemize}\item If the prime $p$ is infinitely ramified, our understanding of the $H_{\Delta_{n,p}}$ 
is very limited. This stands in the way of proving decidability of the class of all finite extensions of $\Q_p$.
\item The preceding theorem does give much insight into definability in adele rings. It now leads us to 
look at adelic versions of 
the Krasner-Basarab hyperfields. \end{itemize}
\end{note}

\section{Restricted products of the hyperfields}\label{restricted-hyper}
In this section we consider adelic versions of the Krasner-Basarab structures. We need to slightly extend our 
notations.

As before, $K$ will be a number field, $V_K^f$ the set of normalized non-archimedean valuations of $K$, 
and $K_v$ the completion of $K$ at $v\in V_K^f$. We will let $S$ denote a finite subset of $V_K^f$.

Given $K$, we can consider several many-sorted restricted products constructed from the Krasner-Basarab structures 
on $Kras_{\Delta_{p,n}}(K)$ associated to $K$, for $p\geq 1, n\geq 0$. Given $K_v$, where $v\in V_K^f$, 
we will consider $Kras_{\Delta_{p(v),n}}(K_v)$, where $p(v)$ is the residue characteristic of $K_v$. We will denote 
$$\pi_{n,p(v)}:K\rightarrow Kras_{\Delta_{p(v),n}}(K_v)$$
the projection map.

The Krasner-Basarab language will give natural languages for these 
restricted products in the formalism of Section \ref{sec-general}. 
We will be concerned here mainly with the 2-sorted structure 
consisting of the the restricted product of the $K_v$ with the ring structure in the first sort, 
the restricted product of the $H_{\Delta_0,v}$ with the hyperring structure in the second sort, 
and the connecting map between the two sorts, for all $v\in V_K^f\setminus S$. 
This restricted product will be called {\it the $S$-adelic principal 
Krasner-Basarab structure associated to $K$}, and denoted 
$Kras^{\A}_{S,0}(K)$. Note that in particular, $S$ can be empty, in which case we have the {\it principal 
adelic Krasner-Basarab structure}. In this case, since the connecting map between the sorts is 
surjective, the finite adeles map onto the $f$ in $\prod_v H_{\Delta_0,v}$ with 
$[[\neg P_{\Delta}(f)]]$ finite, and the image of $\A_K^{fin}$ is 
the restricted product of the $H_{\Delta_0,v}$ with respect to $P_{\Delta}$. Note that 
$H_{\Delta_0,v}$ is relational, except for the monoid operation. 

Note that $P_{\Delta}$ uniformly defines the valuation on the hyperrings $H_{\Delta_{p(v),n}}$, for all 
$v\in V_K^f$, and all $n\geq 1$, 
by Theorem \ref{unif-def-val}. Thus adelic Krasner-Basarab structures can be construed as 
man-sorted restricted products in the formalism of Section \ref{sec-general}. One takes for the first sort 
a ring formula which uniformly defines the valuation rings of the $K_v$, and for the second sort a formula 
of the language of hyperrings which uniformly defines the valuation of the hyperrings associated to $K_v$, for all 
$v\in V_K^f$.

One can also include other sorts $H_{\Delta_{n,p}}$ for all the $v\in S$ 
(where $p$ is the residue characteristic of $K_v$). We call the resulting restricted product 
the {\it adelic-$(p,n)$ Krasner-Basarab structure}. There are certainly several other possibilities of 
restricted products constructed from the family of $K_v$ and $Kras_{\Delta_{p(v),n}}(K_v)$ 
for varying or fixed $n$ or $p$. 
Note that given $p,n$, $\Delta_{p,n}$ is uniformly definable for all local fields $K$ (and even in much more 
generality, cf.\ Section \ref{def-val}). For residue characteristic zero, $\Delta_{p,n}=\Delta_0$.

For both the $S$-principal and the $(p,n)$-adelic Krasner-Basarab structures, 
it is easy to interpret the Boolean algebra $\B$ with $Fin$ as follows. 
$\B$ is just the set of idempotents, with order $\leq$ defined using 
multiplication. The atoms are defined as usual. The stalk at an atom $e$ is naturally identified using the 
idempotents, (e.g.\ in the case of $S$-adelic principal Krasner-Basarab structures the stalk is 
$eKras^{\A}_{S,0}(K)$). This allows us to define the Boolean value 
$[[\Phi(\bar x)]]$, for a formula $\Phi(\bar x)$, as in the basic adelic situation. Finally $Fin$ is defined using the 
Boolean value $[[..]]$ and the valuation.

So for the adelic Krasner-Basarab structures 
we have a Feferman-Vaught Theorem, and we can eliminate the Boolean scaffolding with $Fin$.

\begin{thm} 
Let $K$ be a number field. Given a formula $\Phi(\bar x)$ of the language of rings (resp.\ the language of Basarab-Kuhlmann), 
there is an effectively computable finite set $S=\{v_1,\dots,v_s\}$ of normalized non-Archimedean absolute values of $K$ 
such that, for any $\bar a$ from $\A_K^{fin}$, the condition 
$$\A_K^{fin} \models \Phi(\bar a),$$
is equivalent to a Boolean combination of the following conditions:
\begin{itemize}
 \item (type I)
$$Kras_{S,0}^{\A}(K) \models Fin([[\Psi(\bar a,\pi_{0,v}(\bar a))]]),$$
\item (type II) 
$$Kras_{S,0}^{\A}(K)\models C_k([[\Psi'(\bar a,\pi_{0,v}(\bar a))]]),$$
\item (type III)
$$Kras_{\Delta_{p(v_j),m(j)}}^B(K_{v_j})\models C_k([[\Psi_j(\bar a,\pi_{m(j),p(v_j)}(\bar a))]]),$$
\end{itemize}
where $\Psi(\bar x,\bar y),\Psi'(\bar x,\bar y),\Psi_1(\bar x,\bar y),\dots,\Psi_s(\bar x,\bar y)$ 
are formulas of the 2-sorted Krasner-Basarab language which are quantifier free in the field sort, 
$k\geq 1$, $j\in \{1,\dots,s\}$, $m(j)\geq 1$ (a positive 
integer depending on $v_j$), and $p(v_j)$ is the residue characteristic of $K_{v_j}$. 
The Boolean operations, 
$Fin$, $C_k$, and the Boolean value are expressible in the language of Basarab-Kuhlmann in each of the sorts. 
\end{thm}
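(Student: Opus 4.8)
The plan is to assemble the theorem from the machinery already developed in the paper, treating it as the culmination of the adelic Feferman--Vaught analysis applied to the Krasner--Basarab hyperring sorting. First I would reduce the field-sort formula $\Phi(\bar x)$ to one of the target sortings: in the ring-language case apply Theorem~\ref{thm-kb} to rewrite $K_v \models \Phi(\bar a)$, uniformly in $v$, as $Kras^B_{\Delta_0}(K_v) \models \Phi_0(\bar a, \pi_0(\bar a))$ for all $v$ whose residue characteristic exceeds the bound $\beta(\Phi)$, and as $Kras^B_{\Delta_{p,r}}(K_v) \models \Phi_{p,r}(\bar a, \pi_r(\bar a))$ for the finitely many $v$ with residue characteristic $p \le \beta(\Phi)$; in the Basarab--Kuhlmann case this step is immediate since $\Phi$ is already a formula of that language. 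The finite set $S = \{v_1,\dots,v_s\}$ is then precisely the set of places of $K$ whose residue characteristic is a prime at most $\beta(\Phi)$ (each such prime contributing finitely many places), which is effectively computable since $\beta(\Phi)$ is. After this reduction the local conditions are expressed by $\cL$-formulas that are quantifier-free in the field sort, with the exceptional behaviour at the places in $S$ isolated into separate sorts $Kras^B_{\Delta_{p(v_j),m(j)}}(K_{v_j})$.

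Next I would pass to the restricted-product level. The family $\{Kras^B_{\Delta_0}(K_v) : v \in V_K^f \setminus S\}$, together with its valuation-ring predicates $P_{\Delta}$ (uniformly definable by Theorem~\ref{unif-def-val}), forms a many-sorted restricted product in the sense of Section~\ref{res-pr1}: one takes a ring formula uniformly defining the valuation rings of the $K_v$ in the first sort and the hyperring formula $\Theta^{Kras}$ uniformly defining $P_{\Delta}$ in the second sort, and the resulting restricted product is exactly $Kras^{\A}_{S,0}(K)$. By Theorem~$2_{\mathrm{sort}}$ this structure has a constructive Feferman--Vaught quantifier elimination in $L(\B_{\cL})$. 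Applying this to the rewritten formula $\Phi_0$ (in the variables $\bar a$ and the connecting-map images $\pi_{0,v}(\bar a)$) yields a Boolean combination, over the index Boolean algebra $\B = Powerset(V_K^f \setminus S)$, of conditions of the form $\Psi([[\Psi_1(\bar a, \pi_{0,v}(\bar a))]], \dots)$ with the $\Psi_i$ quantifier-free in the field sort. Since $\cL$ contains $Fin$ and the atom-counting predicates $C_k$, and since the theory of infinite atomic Boolean algebras in this enriched language eliminates quantifiers (as in \cite{DM-boole}), every such $\cL$-formula $\Psi$ on the Boolean values reduces to a Boolean combination of the type~I conditions $Fin([[\Psi(\bar a,\pi_{0,v}(\bar a))]])$ and the type~II conditions $C_k([[\Psi'(\bar a,\pi_{0,v}(\bar a))]])$. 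The finitely many exceptional places contribute, after the same treatment, the type~III conditions $C_k([[\Psi_j(\bar a,\pi_{m(j),p(v_j)}(\bar a))]])$ in the individual structures $Kras^B_{\Delta_{p(v_j),m(j)}}(K_{v_j})$ --- here the ``Boolean algebra'' over a single place is trivial, so these are really just the local truth values, but it is harmless to phrase them with $C_k$ for uniformity.

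Finally I would verify the last sentence of the statement: that the Boolean operations, $Fin$, $C_k$, and the Boolean value $[[\cdot]]$ are all expressible inside the Krasner--Basarab sorting in each of the sorts. This is the content of the discussion immediately preceding the theorem: in $Kras^{\A}_{S,0}(K)$ the Boolean algebra $\B$ is the definable set of idempotents with order given by multiplication, atoms are the minimal nonzero idempotents, the stalk at an atom $e$ is $e\,Kras^{\A}_{S,0}(K)$, the Boolean value $[[\Phi(\bar x)]]$ is defined exactly as in the basic adelic situation of Section~5 via $Loc(\Phi)$ and suprema of minimal idempotents, and $Fin$ is then defined using $[[\cdot]]$ together with the valuation (via the $P_{\Delta}$-predicate and the argument that a Boolean element is finite iff the corresponding value-monoid element is invertible, as in Section~\ref{val-monoid2}); $C_k$ is elementary once atoms are available. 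I expect the main obstacle to be the bookkeeping in the first step: managing the residue-characteristic stratification so that the split between ``generic'' places (handled in the restricted product $Kras^{\A}_{S,0}(K)$) and the finitely many ``bad'' places (handled by Theorem~B of Basarab and hence in separate sorts with their own integers $m(j)$) is genuinely uniform and effective, and checking that the connecting maps $\pi_{0,v}$ and $\pi_{m(j),p(v_j)}$ fit the substructure constraints of the many-sorted restricted-product formalism of Section~4 --- but since $P_{\Delta}$ is preserved by the hyperring maps and uniformly definable, this goes through as in the value-monoid examples already treated.
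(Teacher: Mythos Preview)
Your proposal has the two main ingredients reversed, and this creates a genuine gap. You first apply Theorem~\ref{thm-kb} to obtain the local equivalence $K_v\models\Phi(\bar c)\Leftrightarrow Kras^B_{\Delta_0}(K_v)\models\Phi_0(\bar c,\pi_0(\bar c))$, and then apply Feferman--Vaught (Theorem~$2_{\mathrm{sort}}$) to $\Phi_0$ in the restricted product $Kras^{\A}_{S,0}(K)$. But nothing you have written connects the output of that second step to the condition $\A_K^{fin}\models\Phi(\bar a)$ you are trying to analyse. You would need
\[
\A_K^{fin}\models\Phi(\bar a)\ \Longleftrightarrow\ Kras^{\A}_{S,0}(K)\models\Phi_0(\bar a,\pi(\bar a)),
\]
and this does \emph{not} follow from the local equivalence: two formulas equivalent in every factor need not be equivalent in the (restricted) product. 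A trivial counterexample in the product of linearly ordered factors is the trichotomy $(x<y)\vee(y<x)\vee(x=y)$, which holds in each factor but fails in the product; the same phenomenon blocks your step whenever $\Phi_0$ involves disjunctions or quantifiers in the hyperring sort.

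The paper proceeds in the opposite order. It first applies Feferman--Vaught (via Theorem~$2_{\mathrm{sort}}$ and the Boolean quantifier elimination) to $\Phi$ in $\A_K^{fin}$, obtaining a Boolean combination of conditions $\Theta([[\Xi(\bar a)]])$ with $\Theta\in\{Fin,C_k\}$ and $\Xi$ a formula of the ring (or Basarab--Kuhlmann) language. Only then is Theorem~\ref{thm-kb} applied, to each such $\Xi$, giving $K_v\models\Xi(\bar a(v))\Leftrightarrow Kras_v\models\Psi(\bar a(v),\pi(\bar a(v)))$ for $v\notin S$ and an analogous statement for each $v_j\in S$. This is sound precisely because the Boolean value $[[\Xi(\bar a)]]$ is computed coordinate-wise, so the local equivalence gives $[[\Xi(\bar a)]]\setminus S=[[\Psi(\bar a,\pi(\bar a))]]$ exactly. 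The $Fin$ case is then immediate (finiteness is unaffected by the finite set $S$), while the $C_k$ case requires an explicit inclusion--exclusion counting of how many of the $k$ witnessing atoms lie in $S$ versus outside $S$; this combinatorial bookkeeping is what produces the mixture of type~II and type~III conditions in the statement, and your proposal elides it. Your treatment of the final sentence (internal definability of the Boolean apparatus) is fine.
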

\begin{proof} 
By Theorem $2_{\mathrm{sort}}$, for every $\bar a$ from $\A_K^{fin}$, the condition
$$
\A_K^{fin}\models \Phi(\bar a)
$$
is equivalent to a Boolean combination of conditions of the form
\begin{equation}\label{c}
\A_K^{fin}\models \Theta([[\Xi(\bar a)]])
\end{equation}
where $\Theta$ is either $Fin$ or $C_k$, and $\Xi(x)$ is a formula of the language of rings 
(resp.\ language of Basarab-Kuhlmann). 
By Theorem \ref{thm-kb}, there is a finite set 
$$S=\{v_1,\dots,v_s\}$$
of normalized non-Archimedean absolute values of $K$, 
positive integers $m(j)$ for $j\in \{1,\dots,s\}$, and 
formulas
$$\Psi(\bar x,\bar y),\Psi_1(\bar x,\bar y),\dots,\Psi_s(\bar x,\bar y)$$ 
from the $2$-sorted Krasner-Basarab language with no quantifiers of 
the field sort, and involving extra variables $\bar y$ from the sorts other than the field sort, 
such that for every $v\notin S$, and every $\bar a$ from $K_v$
$$K_v \models \Xi(\bar a)\Leftrightarrow Kras^B_{\Delta_{0,v}}(K_v)\models \Psi(\bar a,\pi_{0,v}(\bar a)),$$
and for all $j\leq s$, and every $\bar a$ from $K_{v_j}$
$$K_{v_j}\models \Xi(\bar a)\Leftrightarrow Kras^B_{\Delta_{p(v_j),m(j)}}(K_{v_j})\models 
\Psi_j(\bar a,\pi_{m(j),v_j}(\bar a)).$$
Here $p(v_j)$ is the residue characteristic of $K_{v_j}$. Note that
$$Fin(\{v: K_v\models \Xi(\bar x)\}) \Leftrightarrow Fin(\{v \notin S: K_v \models \Xi(\bar x)\})$$
$$\Leftrightarrow Fin(\{v: Kras^B_{\Delta_{0,v}}(K_v)\models 
\Psi(\bar x,\pi_{0,v}(\bar x))\}),$$
since $S$ is finite,

Thus if $\Theta$ is $Fin$, then condition \ref{c} is equivalent to 
$$Kras_{S,0}^{\A}\models Fin([[\Psi(\bar a,\pi_{0,v}(\bar a))]]).$$
If $\Theta$ is $C_k$, for some $k\geq 1$, then condition \ref{c} is equivalent to 
$$Kras_{S,0}^{\A}\models C_k([[\Psi(\bar a,\pi_{0,v}(\bar a))]])) \vee 
(Kras_{S,0}^{\A}\models C_{k-1}([[\Psi(\bar a,\pi_{0,v}(\bar a))]]) \wedge$$ 
$$\bigvee_{1\leq j\leq s} 
(Kras_{\Delta_{p(v_j),m(j)}}(K_{v_j})\models \Psi_{j}(\bar a,\pi_{m(j),p(v_j)}(\bar a))) \vee$$
$$\vdots$$
$$\vee (Kras_{S,0}^{\A}\models C_{k-t}([[\Psi(\bar a,\pi_{0,v}(\bar a))]]) \wedge $$
$$\bigvee_{(j_1,\dots,j_t)\in S^t} (Kras_{\Delta_{p(v_{j_1}),m(j_1)}}(K_{v_{j_1}}) \models 
\Psi_{j_1}(\bar a,\pi_{m(j_1),p(v_{j_1})}(\bar a))$$
$$\wedge \dots \wedge 
Kras_{\Delta_{p(v_{j_t}),m(j_t)}}(K_{v_{j_t}})\models \Psi_{j_t}(\bar a,\pi_{m(j_t),p(v_{j_t})}(\bar a))) \vee$$
$$\vdots$$
$$\vee \bigvee_{(j_1,\dots,j_k)\in S^k} 
(Kras_{\Delta_{p(v_{j_1}),m(j_1)}}(K_{v_{j_1}})\models \Psi_{j_1}(\bar a,\pi_{m(j_1),p(v_{j_1})}(\bar a))
\wedge \dots \wedge$$
$$Kras_{\Delta_{p(v_{j_k}),m(j_k)}}(K_{v_{j_k}})\models \Psi_{j_k}(\bar a,\pi_{m(j_k),p(v_{j_k})}(\bar a)))$$
This yields (II). Taking negations, we deduce (III) for the case $\Theta$ is $\neg C_k$, for some $k$.
The last statement concerning the definitions of $Fin$, Boolean structure, and Boolean value in each sort of the 
Krasner-Basarab language follow from the remarks before the theorem.
\end{proof}

\section{Stable embedding}

It has been interesting and important in recent years, in the general area of valued fields, 
to analyze {\it stable embeddings and 
interpretations} (cf.\ \cite{HHM}). 
For example for Henselian fields coming under the Ax-Kochen-Ershov analysis, one knows that 
parametrically definable subsets of the value group, using the three sorted language in (9) of Section \ref{sec-sorts}, 
are already parametrically definable in the value group (cf.\ \cite{HHM}).

We show now that the local fields $K_v$ are stably embedded in the adeles $\A_K$ (via the identification of 
$K_v$ with $e_{\{v\}}K_v$). 
\begin{thm} Let $X$ be a definable subset of $\A_K^n$, where $n\geq 1$. 
Let $e$ be a minimal idempotent. 
Then $X\cap (e\A_K)^n$ is definable in $e\A_K$.
\end{thm}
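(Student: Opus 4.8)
The plan is to use the quantifier-elimination machinery already in place, in the form of Corollary \ref{adelic-qe} (and its analogue for $\A_K$), to reduce the statement to a simple observation about Boolean values at a single minimal idempotent. First I would recall that, by Corollary \ref{adelic-qe}, the set $X\subseteq \A_K^n$ is a finite Boolean combination of sets of the form $Fin([[\Psi(\bar x)]])$ and $C_j([[\Xi(\bar x)]])$, where $\Psi,\Xi$ are quantifier-free $L$-formulas (in the appropriate sort) and $L$ is one of the languages in which the $K_v$ have uniform quantifier elimination. Since intersecting with $(e\A_K)^n$ and definability in $e\A_K$ are both preserved under finite Boolean combinations, it suffices to treat a single such atomic piece.

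Next I would fix a minimal idempotent $e$, corresponding to a valuation $v_e$, so that $e\A_K\cong K_{v_e}$ as a ring (definably, with $e$ as unit), and analyze what happens to a Boolean value $[[\Theta(\bar x)]]$ when $\bar x$ ranges over $(e\A_K)^n$. The key point is that for $\bar a\in (e\A_K)^n$ the element $\bar a$ is supported on the single place $v_e$ (all other coordinates are $0$), so for every minimal idempotent $e'\neq e$ the stalk condition defining membership of $e'$ in $[[\Theta(\bar a)]]$ reads $K_{v}\models \Theta(0,\dots,0)$ for the corresponding $v$, i.e. it is the fixed truth value of $\Theta(\bar 0)$ in $K_v$, independent of $\bar a$; and $e\in [[\Theta(\bar a)]]$ iff $e\A_K\models \Theta(\bar a)$, which by the definable isomorphism $e\A_K\cong K_{v_e}$ is a condition on $\bar a$ definable inside $e\A_K$. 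Hence $[[\Theta(\bar a)]]$, as a function of $\bar a\in(e\A_K)^n$, takes at most two values (differing only in whether the atom $e$ is included), each of which is obtained on a subset of $(e\A_K)^n$ definable in $e\A_K$. Consequently $Fin([[\Theta(\bar a)]])$ and $C_j([[\Theta(\bar a)]])$ — which depend only on the cardinality-type of $[[\Theta(\bar a)]]$ in $Powerset(V_K)$ — are each, restricted to $(e\A_K)^n$, equal to one of the two sets cut out by $e\A_K\models\Theta(\bar a)$ and its negation, together with a constant (the truth value of the corresponding Boolean-algebra statement about the fixed cofinite/finite part coming from $\Theta(\bar 0)$). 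This gives definability of $X\cap(e\A_K)^n$ in $e\A_K$.

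I expect the main obstacle to be the careful bookkeeping of the ``fixed part'' of the Boolean value: when $\bar a\in(e\A_K)^n$, the set $[[\Theta(\bar a)]]$ is the disjoint union of (possibly) $\{e\}$ and a set $B_\Theta\subseteq V_K\setminus\{v_e\}$ that does not depend on $\bar a$ but does depend on $\Theta$, namely $B_\Theta=\{v\neq v_e: K_v\models\Theta(\bar 0)\}$, which may well be infinite or cofinite. One must check that whether $Fin$ or $C_j$ holds of $\{e\}\cup B_\Theta$ versus $B_\Theta$ reduces to finitely many cases according to the (fixed) cardinality type of $B_\Theta$, so that in each case the answer is either identically true, identically false, or equal to $e\A_K\models\Theta(\bar a)$ (or its negation). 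Once this case analysis is done, one assembles the finitely many atomic contributions back into the Boolean combination giving $X$, and transports everything through the ring isomorphism $e\A_K\cong K_{v_e}$ to conclude that $X\cap(e\A_K)^n$ is definable in $e\A_K$. The argument for $\A_K^{fin}$ in place of $\A_K$ is identical, using $V_K^f$ throughout.
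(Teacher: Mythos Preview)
Your proposal is correct and follows essentially the same route as the paper's proof: reduce via Corollary~\ref{adelic-qe} to the atomic pieces $Fin([[\Theta(\bar x)]])$ and $C_j([[\Theta(\bar x)]])$, observe that for $\bar b\in(e\A_K)^n$ the Boolean value $[[\Theta(\bar b)]]$ equals a fixed set $B_\Theta$ (independent of $\bar b$) together possibly with the single atom $e$, and then perform the short case analysis on $Fin$ and $C_j$ you outline. The paper makes the parameters $\bar a\in\A_K^m$ explicit throughout (so that $B_\Theta=\{w\neq v_e:K_w\models\Theta(\bar 0,\bar a(w))\}$ and the $e$-condition reads $e\A_K\models\Theta(\bar b,e\bar a)$, with $e\bar a\in e\A_K$), and writes out the cases for $Fin$ and $C_j$ separately, but the argument is the same as yours.
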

\begin{proof} By Corollary \ref{adelic-qe},  
$X$ is defined by a Boolean combination of formulas of the type:

$$Fin([[\Psi(x_1,\dots,x_n,\bar a)]]),$$
$$C_j([[\Phi(x_1,\dots,x_n,\bar a)]]),$$
where $j\geq 1$, $\bar a\in \A_K^m$, 
and $\Psi$ and $\Phi$ are from the ring language. 
Recall that $$[[\Phi(x_1,\dots,x_n,\bar a)]]=1$$ is equivalent to 
$\neg C_1([[\neg \Phi(x_1,\dots,x_n,\bar a)]])$.

Let $b_1,\dots,b_n\in e\A_K$. Let $v\in V_K$ be the normalized valuation that corresponds to $e$. Then 
$[[\Phi(b_1,\dots,b_n,\bar a)]]$ is equal to
$$\{w\neq v: K_w\models \Phi(0,\dots,0,\bar a(w))\} \cup \{v\}$$
if
$$K_v\models \Phi(b_1(v),\dots,b_n(v),\bar a(v)),$$
and to
$$\{w\neq v: K_w\models \Phi(0,\dots,0,\bar a(w))\}$$
if
$$K_v\models \neg \Phi(b_1,\dots,b_n,\bar a(v))\}.$$ 
Now let
$$f_1,\dots,f_n\in X\cap (e\A_K)^n.$$
We have two cases to consider:

\

{\bf The case of $Fin$}: 

\

In this case we have 
$$\A_K\models Fin([[\Phi(f_1,\dots,f_n,\bar a)]])
\Leftrightarrow 
\A_K\models Fin(\Phi(0,\dots,0,\bar a)]]).$$
There are two sub-cases.

(i) $\A_K\models Fin([[\Phi(0,\dots,0,\bar a)]])$. In this case 
$$X\cap (e\A_K)^n=(e\A_K)^n$$
(ii) $\A_K\models \neg Fin([[\Phi(0,\dots,0,\bar a)]])$. In this case,
$$X\cap (e\A_K)^n=\emptyset.$$

\

{\bf The case of $C_j$}: 

\

Note that 
$$\A_K\models C_j([[\Phi(f_1,\dots,f_n,\bar a)]])$$
if and only if either
$$\A_K\models (C_{j-1}([[\Phi(f_1,\dots,f_n,\bar a)]])\wedge \neg C_{j}([[\Phi(0,\dots,0,\bar a)]])$$
and
$$e\A_K\models \Phi(f_1(v),\dots,f_n(v),\bar a(v)),$$
or 
$$\A_K\models C_j([[\Phi(0,\dots,0,\bar a)]])$$
(and in this case there is no condition on $e\A_K$).

In the first case
$$X\cap (e\A_K)^n=\{(g_1,\dots,g_n)\in (e\A_K)^n: \Phi(g_1,\dots,g_n,\bar a)\},$$
and in the second case
$$X\cap (e\A_K)^n=(e\A_K)^n.$$
In all the cases, it is thus clear that $X\cap (e\A_K)^n$ is definable with parameters from $e\A_K$.
\end{proof}

\begin{note} Although we do not take the time to state a general result here, it is clear 
that one has for generalized products a very general stable embedding theorem for factors. 
\end{note}
Recall from Section \ref{val-monoid} that we have the product valuation $\prod v$ from $\A_K^{fin}$ to the 
restricted product $\Gamma$ of the lattice-ordered monoids $\Z\cup\{\infty\}$. Evidently 
$\Gamma$ is interpretable in the ring $\A_K^{fin}$.

\begin{thm} The value monoid $\Gamma$ of $\A_{\Q}^{fin}$ is not stably interpreted (via the valuation map).
\end{thm}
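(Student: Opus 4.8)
The plan is to refute stable interpretation by producing a parameter from $\A_{\Q}^{fin}$ which defines, inside the value monoid $\Gamma = \prod^{(\Phi)}_p (\Z\cup\{\infty\})$, a subset that is not definable in $\Gamma$ using only parameters from $\Gamma$. The natural candidate is an adele $a\in\A_{\Q}^{fin}$ whose support is an infinite set $P_0$ of primes (say $a(p)=p$ for $p\in P_0$ and $a(p)=1$ otherwise), and the subset of atoms of $\Gamma$ cut out by it. The key point is that $\A_{\Q}^{fin}$, through the Boolean value machinery of Section 5 and Corollary \ref{adelic-qe}, can refer to the infinite/co-infinite Boolean algebra $Powerset(V_{\Q}^f)$ with the predicate $Fin$; whereas $\Gamma$ on its own, being a restricted product of the fixed lattice-ordered monoid $\Z\cup\{\infty\}$ over the index set of primes, is governed by its own Feferman--Vaught quantifier elimination (Theorem $2_{\mathrm{sort}}$), and its parametrically definable subsets of the atom-set are controlled by the Boolean algebra generated by the finitely many Boolean values $[[\Theta(\bar g)]]$ attached to the finite parameter tuple $\bar g$.

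First I would set up the two QE descriptions precisely. On the adelic side: by Corollary \ref{adelic-qe}, every definable subset of $(\A_{\Q}^{fin})^m$ is a Boolean combination of conditions $Fin([[\Psi(\bar x)]])$ and $C_j([[\Xi(\bar x)]])$ for quantifier-free ring formulas; in particular, with the parameter $a$ above one can define, among idempotents $e$ (equivalently among atoms/primes $p$), the predicate ``$p\in P_0$'' — just take $e\A_{\Q}^{fin}\models a e\neq 0$ and $ae$ not a unit, or similar. So the trace on the atoms of a suitable adelic-definable set is an arbitrary recursive-looking infinite, co-infinite set $P_0$. On the $\Gamma$ side: by Theorem $2_{\mathrm{sort}}$ applied to $\Gamma$ as a restricted product of copies of the (decidable, in fact o-minimal-like) monoid $\Z\cup\{\infty\}$, any $\Gamma$-definable subset $D\subseteq\{\text{atoms}\}$ defined with parameters $\bar g = (g_1,\dots,g_r)\in\Gamma^r$ is, on the atom set, a Boolean combination of the finitely many sets $[[\theta(\bar g)]]$ where $\theta$ ranges over finitely many formulas in the stalk language $\{+,\wedge,\vee,0,\infty\}$. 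Now each stalk is the \emph{same} structure $\Z\cup\{\infty\}$, and the parameter $\bar g$ has the property that for all but finitely many primes $p$ the stalk-tuple $\bar g(p)$ is the all-zero tuple (this is exactly the restricted-product/$Fin$ condition). Hence for all but finitely many $p$ the truth value of $\theta(\bar g(p))$ in $\Z\cup\{\infty\}$ equals $\theta(\bar 0)$, a fixed value; so each $[[\theta(\bar g)]]$ is finite or cofinite, and therefore every $\Gamma$-definable subset of the atoms is finite or cofinite.

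The contradiction is then immediate: if $\Gamma$ were stably interpreted, the set $P_0$ of atoms (which is adelic-definable, hence would be $\Gamma$-definable with parameters from $\Gamma$ under the stability hypothesis) would have to be finite or cofinite, but it is neither by construction. The only real work is in the second half of the middle paragraph — justifying that the parametrically $\Gamma$-definable subsets of the atom set are exactly the finite and cofinite ones. I would extract this from Theorem $2_{\mathrm{sort}}$ exactly as in Lemma 11 (the lemma establishing that $[[\Phi(\bar f)]]\in\B_{fin/cofin}$ for the direct-sum value monoid), the point being identical: off the finite support of the parameter tuple, every relevant stalk-formula evaluates at $\bar 0$ and hence is constantly true or constantly false. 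The one subtlety to be careful about is that the parameter tuple in a putative $\Gamma$-definition of $P_0$ lies in the full restricted product $\Gamma$, not the direct sum, so $\bar g(p)$ is eventually $\bar 0$ but may be nonzero (indeed negative) on finitely many $p$ and also ``$\geq 0$'' on a cofinite set — still, ``eventually equal to $\bar 0$'' is all that is used, and the finitely many exceptional primes only change $[[\theta(\bar g)]]$ on a finite set, preserving finite-or-cofinite. This is the step I expect to require the most care to state cleanly, but it is not deep; the heart of the theorem is the mismatch between ``$Fin$ is available on the adelic side'' and ``on the $\Gamma$-side all Boolean values built from a restricted-product parameter are automatically finite-or-cofinite''.
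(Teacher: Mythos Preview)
There is a genuine gap, and it lies exactly in the step you flag as ``the one subtlety''. You assert that a parameter tuple $\bar g\in\Gamma^r$ satisfies $\bar g(p)=\bar 0$ for all but finitely many $p$, and you invoke this to conclude that every Boolean value $[[\theta(\bar g)]]$ is finite or cofinite. But this is false for the value monoid under discussion. The monoid $\Gamma$ is the restricted product of the $\Z\cup\{\infty\}$ with respect to the formula $y\wedge 0=0$ (i.e.\ $y\ge 0$), so an element $g\in\Gamma$ satisfies $g(p)\ge 0$ for cofinitely many $p$, \emph{not} $g(p)=0$. You have confused Version~1/2 of Section~\ref{val-monoid} with Version~3 (the idelic/direct-sum version); Lemma~11, which you cite, is about the direct sum and does not transfer.

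This error is fatal to your strategy, because once parameters from $\Gamma$ are allowed, \emph{every} subset of the atom set is parametrically definable in $\Gamma$. Given any $P_0\subseteq V_\Q^f$, take $g\in\Gamma$ with $g(p)=1$ for $p\in P_0$ and $g(p)=0$ for $p\notin P_0$; this lies in $\Gamma$ since all coordinates are $\ge 0$. Using the stalk interpretation of Section~\ref{val-monoid2} (the congruence $\equiv_e$ and the definable element $1$), the formula ``$g\equiv_e 1$'' picks out precisely the atoms $e_p$ with $p\in P_0$. So the dichotomy ``finite or cofinite'' that your argument rests on simply does not hold, and exhibiting an arbitrary infinite co-infinite $P_0$ via an adelic parameter cannot produce a contradiction.

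The paper proceeds differently: rather than an arbitrary $P_0$ coming from an adelic parameter, it uses the set of primes $p\equiv 1\pmod 4$, which is $\emptyset$-definable in $\A_\Q$ via a ring sentence distinguishing those $\Q_p$. The image set $Y\subseteq\Gamma$ is then argued to be undefinable in the value monoid by appealing to the Feferman--Vaught analysis of $\Gamma$ together with a Presburger-type elimination in the factors $\Z\cup\{\infty\}$; the point is that the factors are all isomorphic and carry no arithmetic information about which prime indexes them, so no formula of the monoid language can isolate the $1\pmod 4$ coordinates. Your approach, by contrast, tries to rule out even parametrically definable sets of atoms in $\Gamma$, and that is too strong to be true.
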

\begin{proof}
We can define in $\A_{\Q}$ the set $X$ of 
idempotents whose support is the set of minimal idempotents $e$ whose corresponding prime 
$p$ is congruent to $1$ modulo $4$. Indeed, let $\Psi$ be a sentence that holds in $\Q_p$ for exactly 
the primes $p$ with $p\equiv 1(\mathrm{mod}~4)$, and let $\Psi'$ be a sentence that holds in all 
non-Archimedean local fields and fails in all the Archimedean local fields. Then
$$X=\{x\in \A_{\Q}: supp(x)=[[\Psi \wedge \Psi']]\}.$$

The image of $X$ under the product valuation $\prod v$ 
is the set $Y$ of all $g$ in $\prod_p (\Z\cup \{\infty\})$ which are $0$ at $p$ 
and $\infty$ elsewhere. It is an easy exercise using the Feferman-Vaught Theorem (or Theorem $2_{\mathrm{sort}}$) 
applied to $\Gamma$ 
to show that this set is not definable in the value monoid, 
using an appropriate modification of the Pressburger elimination in the factors.
\end{proof}

\section{A remark about $NTP_2$}
The property of not having the tree property of the second kind $NTP_2$ is a generalization of the properties of 
simple and NIP (the negation of the independence property). It is known 
that ultraproducts of $\Q_p$ and certain valued difference fields 
have $NTP_2$ (cf.\ \cite{CH}).  

Fix a number field $K$. The theory of $\A_K$ has the independence property in two different ways. The first is via the 
residue fields (appealing to Duret \cite{Duret}, also cf.\ \cite{FJ}), 
and the other comes from the definable Boolean algebra $\B_K$. 
Now the problem with the residue fields does not extend to the property $NTP_2$. 
However, it turns out that we still have the following.
\begin{thm}\label{thm-ntp} The theory of finite adeles $\A_K^{fin}$ and the theory of adeles 
$\A_K$ do not have the property $NTP_2$.
\end{thm}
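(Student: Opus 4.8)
The plan is to locate a $TP_2$-configuration entirely inside the definable Boolean algebra of idempotents, so that the failure of $NTP_2$ is seen to come from the Boolean part $\B_K$ rather than from the residue fields. Recall that $\B_K^f$, the set of idempotents of $\A_K^{fin}$, is quantifier-free definable in the language of rings, that its meet operation $\wedge$ is just multiplication, and that $e\mapsto\{v:e(v)=1\}$ is an isomorphism $\B_K^f\cong Powerset(V_K^f)$; moreover \emph{every} subset $S\subseteq V_K^f$ is $\{v:e_S(v)=1\}$ for an idempotent $e_S\in\A_K^{fin}$ (here one uses that we are in a restricted product, $0,1\in\cO_v$ for all $v$). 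Since $V_K^f$ is infinite, it therefore suffices to exhibit, in the ring $\A_K^{fin}$, a ring formula together with an array witnessing $TP_2$; the same argument with $V_K^f$ replaced by $V_K$ handles $\A_K$ (the finitely many archimedean places being irrelevant).

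So I would fix a partition $V_K^f=\bigsqcup_{i<\omega}W_i$ into infinitely many infinite blocks, enumerate $W_i=\{v_{i,0},v_{i,1},v_{i,2},\dots\}$, take the quantifier-free ring formula
\[
\varphi(x;s,w)\ :\equiv\ x\cdot w=s,
\]
and put $a_{i,j}:=(e_{\{v_{i,j}\}},\,e_{W_i})\in(\A_K^{fin})^2$. The claim is that the array $(a_{i,j})_{i,j<\omega}$ witnesses that $\varphi$ has $TP_2$. Rows are $2$-inconsistent: if $x\cdot e_{W_i}=e_{\{v_{i,j}\}}$ and $x\cdot e_{W_i}=e_{\{v_{i,j'}\}}$ with $j\neq j'$, then $e_{\{v_{i,j}\}}=e_{\{v_{i,j'}\}}$, which is false since $v_{i,j}\neq v_{i,j'}$ (equivalently, the first equation forces $x(v_{i,j'})=0$ and the second forces $x(v_{i,j'})=1$). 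Paths are consistent: given $f\colon\omega\to\omega$, set $x:=e_S$ with $S:=\{v_{i,f(i)}:i<\omega\}$; since the $W_i$ are pairwise disjoint and $v_{i,f(i)}\in W_i$, one has $S\cap W_i=\{v_{i,f(i)}\}$, hence $x\cdot e_{W_i}=e_{S\cap W_i}=e_{\{v_{i,j}\}}$ with $j=f(i)$ for every $i$, i.e.\ $\A_K^{fin}\models\varphi(x;a_{i,f(i)})$ for all $i$. By compactness this yields $TP_2$ for $Th(\A_K^{fin})$, and verbatim for $Th(\A_K)$.

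I do not expect a serious obstacle here: the $2$-inconsistency of rows is purely formal, and the only genuine content is the path-consistency clause, which reduces to the bookkeeping that the parameters $e_{\{v_{i,j}\}},e_{W_i}$ and the witness $e_S$ really lie in the restricted product (true, as noted). The ``hard part'' is thus conceptual rather than technical, namely recognizing that one should work in $\B_K^f\cong Powerset(V_K^f)$: the very same array shows that \emph{every} infinite Boolean algebra has $TP_2$, so the residue-field independence (the residue fields being simple, hence $NTP_2$) plays no role, and the tree property of the adeles is entirely accounted for by the definable Boolean algebra of idempotents.
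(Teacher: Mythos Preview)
Your proof is correct, and it takes a genuinely different route from the paper's.

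The paper's witness uses the \emph{valuation} on the local factors rather than the pure Boolean structure. Concretely, the paper picks for each row $j$ a minimal idempotent $e_j$ (corresponding to the $j$th prime) and an element $a_{j1}$ supported at that atom with nonzero coordinate in the maximal ideal, then sets $a_{jk}=a_{j1}^k$. The formula $\Psi(x,y)$ asserts that $y$ is supported at a single atom, has positive valuation there, and that $x$ has the \emph{same} valuation as $y$ at that coordinate; this relies on the uniform ring-definability of the valuation across the $K_v$ from \cite{CDLM}, and the resulting formula is several quantifier alternations deep. Row-inconsistency then comes from $k_1v(a_{j1})\neq k_2v(a_{j1})$, and path-consistency from choosing an adele with prescribed valuation $f(j)v(a_{j1})$ at each $e_j$. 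For $\A_K$ the paper passes through the definability of $\A_K^{fin}$ inside $\A_K$.

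By contrast, your argument lives entirely in the Boolean algebra of idempotents and uses the quantifier-free formula $x\cdot w=s$. This is more elementary (no appeal to \cite{CDLM}, no quantifier complexity), and it makes explicit what the paper only alludes to in the sentence preceding the theorem, namely that the $TP_2$ already comes from the definable $\B_K$. Your handling of $\A_K$ by simply rerunning the argument over $V_K$ is also a bit cleaner than the paper's reduction via definability of $\A_K^{fin}$. One small caveat: your closing remark that ``every infinite Boolean algebra has $TP_2$'' is stronger than what your array actually shows; the construction uses infinitely many atoms, so it is the infinite \emph{atomic} case that is covered (which is exactly what is needed here). This does not affect the proof.
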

\begin{proof} To show the negation of $NTP_2$ we have to produce a formula $\Psi(x,y)$, and an array
$$a_{11},a_{12},\dots$$
$$a_{21},a_{22},\dots$$ 
$$\vdots$$
so that for fixed $j$,
$$\{\Psi(x,a_{jk}): k\geq 1\}$$
is inconsistent, but for each $f:\N\longrightarrow \N$,
$$\{\Psi(x,a_{jf(j)}): j\geq 1\}$$
{\it is} consistent. ($x$ and $y$ can be tuples).

Firstly, put $a_{jk}=a_{j1}^k$, $k\geq 1$. Secondly, for each $j$ 
pick a minimal idempotent $e_j$ so that $e_j\A_K^{fin}$ is the $K_v$, $v\in V_K^f$, which has 
residue field of 
characteristic $p_j$, the $j$th prime. Finally, pick $a_{j1}$ to be an atom 
such that the coordinate at which it is nonzero lies in the maximal ideal of the valuation ring 
of $e_j\A_K$.

Now take the formula $\Psi(x,y)$ to be
$$C_1([[y\neq 0]])\wedge \neg C_2([[y\neq 0]])\wedge [[y\neq 0]]=[[\rho(y)]]\leq [[\sigma(x,y)]],$$
where $\rho(y)$ is a formula of the language of rings which is equivalent in all the $K_v$ to the statement that 
$y$ has positive valuation, and 
$\sigma(x,y)$ is a formula of the language of rings which is equivalent in all the $K_v$ to the statement 
that $x$ and $y$ have the same valuation. Note that such formulas exist by the 
results in \cite{CDLM} on uniform definability of the valuation for all $K_v$ in the ring language. Here $v$ ranges 
over all non-archimedean valuations of $K$. Note that the first two conjuncts from the left state that 
$[[y\neq 0]]$ is a minimal idempotent, and the formula states that the support of $y$ is minimal and the nonzero 
coordinate of $y$ lies 
in the maximal ideal of the valuation ring, and $x$ and $y$ have the same valuation at that coordinate.

Now it is clear that 
$$\{\Psi(x,a_{jk}): k\geq 1\}$$
is inconsistent, since for $k_1\neq k_2$
$$v(e_{j}a_{jk_1})\neq v(e_j(a_{jk_2}))$$
for $v$ the (normalized) valuation of $e_j\A_K^{fin}$ (since $k_1v(e_j(a_{j1}))\neq k_2v(e_j(a_{j1}))$).

However, for any $f:\N\longrightarrow \N$
$$\{\Psi(x,a_{j,f(j)}): j\geq 1\}$$
is consistent by choosing a finite adele $A\in \A_K^{fin}$ such that its coordinate $A(j)$ in $e_j\A_K^{fin}$ satisfies 
$$v(A(j))=f(j)v(a_{j,1}),$$
for all $j$. Note that there is such an element in $\A_K^{fin}$. 

This proves that $\A_K^{fin}$ does not have the property $NTP_2$. To deduce that the adeles $\A_K$ does not have 
$NTP_2$, it suffices to show that $\A_K^{fin}$ is definable in $\A_K$ (in the language of rings). To see this, 
take a sentence $\Theta$ which holds in all archimedean completions $K_v$ of $K$ but is not true in all the 
non-archimedean completions. Then $\A_K^{fin}$ can be defined as the set of all $f\in \A_K$ such that 
$f(v)=0$ for all $v\in [[\Theta]]$.
\end{proof}
\begin{note} The formula uniformly defining the valuation of all the local fields 
from \cite{CDLM} is existential-universal (this is shown in \cite{CDLM} to be optimal, i.e. there is no 
uniform universal or existential definition). 
It follows that the formula $\Psi(x,y)$ in the proof of Theorem \ref{thm-ntp} is universal-existential-universal.
\end{note}

\end{document}